\numberwithin{equation}{section}
\numberwithin{equation}{section}
\DeclareMathOperator{\erf}{erf}
\DeclareMathOperator{\erfc}{erfc}
\newtheorem{theorem}{Theorem}[section]
\newtheorem{lemma}[theorem]{Lemma}
\newtheorem{definition}[theorem]{Definition}
\newtheorem{remark}[theorem]{Remark}
\newtheorem{proposition}[theorem]{Proposition}
\newtheorem{assumption}[theorem]{Assumption}
\numberwithin{equation}{section}
\begin{document}

\title{A fast spectral sum-of-Gaussians method for electrostatic summation in quasi-2D systems}


\author[1,2]{Xuanzhao Gao
\thanks{xz.gao@connect.ust.hk}}

\author[3]{Shidong Jiang
\thanks{sjiang@flatironinstitute.org}}

\author[3,4]{Jiuyang Liang
\thanks{jliang@flatironinstitute.org; liangjiuyang@sjtu.edu.cn}}

\author[4]{Zhenli Xu
\thanks{xuzl@sjtu.edu.cn}}

\author[4]{Qi Zhou
\thanks{zhouqi1729@sjtu.edu.cn}}

\affil[1]{Thrust of Advanced Materials, The Hong Kong University of Science and Technology (Guangzhou), 511458, Guangzhou, China}
\affil[2]{Department of Physics, The Hong Kong University of Science and Technology, 999077, Hong Kong SAR, China}
\affil[3]{Center for Computational Mathematics, Flatiron Institute, Simons Foundation, USA}
\affil[4]{School of Mathematical Sciences, MOE-LSC and CMA-Shanghai, Shanghai Jiao Tong University, 200240, Shanghai, China}

\date{}
\maketitle

\begin{abstract}
The quasi-2D electrostatic systems, characterized by periodicity in two dimensions with a free third dimension, have garnered significant interest in the fields of semiconductor physics, new energy technologies, and nanomaterials. We apply the sum-of-Gaussians (SOG) approximation to the Laplace kernel, dividing the interactions into near-field, mid-range, and long-range components. The near-field component, singular but compactly supported in a local domain, is directly calculated. The mid-range component is managed using a procedure similar to nonuniform fast Fourier transforms (NUFFTs) in three dimensions. The long-range component, which includes Gaussians of large variance, is treated with 
polynomial interpolation/anterpolation in the free dimension and Fourier spectral solver in the other two dimensions on proxy points. Unlike the fast Ewald summation, which requires extensive zero padding in the case of high aspect ratios, the separability of Gaussians allows us to handle such case without any zero padding in the free direction. Furthermore, while NUFFTs typically rely on certain upsampling in each dimension, and the truncated kernel method introduces an additional factor of upsampling due to kernel oscillation, our scheme eliminates the need for upsampling in any direction due to the smoothness of Gaussians, significantly reducing computational cost for large-scale problems.
Finally, whereas all periodic fast multipole methods require dividing the periodic tiling into a smooth far part and a near part containing its nearest neighboring cells, our scheme operates directly on the fundamental cell, resulting in better performance with simpler implementation. 
We provide a rigorous error analysis showing that upsampling is not required in NUFFT-like steps, and develop a careful parameter selection scheme to balance various parts of the whole scheme,  achieving $O(N\log N)$ complexity with a small prefactor. The performance of the scheme is demonstrated via extensive numerical experiments. The scheme can be readily extended to deal with many other kernels, owing to the general applicability of the SOG approximations.

{\bf Keywords:} Kernel summation, electrostatic interactions,  quasi-2D system, sum-of-Gaussians approximation.

{\bf AMS subject classifications}.  	
82M22, 65T50, 41A50, 65Y20
\end{abstract}

\section{Introduction}
The study of many-body charged systems is fundamental to numerous scientific disciplines. In numerical simulations, these systems are often modeled using fully periodic boundary conditions in three dimensions (3D-PBC) to mitigate finite-volume effects~\cite{Frenkel2001Understanding}. 
However, this approach fails to accurately capture the unique physical properties that arise from interfaces between media and solid surfaces, which differ significantly from bulk properties. To overcome this limitation, ``quasi-2D" models~\cite{mazars2011long} are used, where two dimensions are periodic, and the third is confined between two parallel planes separated by nanometers or angstroms. Quasi-2D systems have gained considerable interest in studies of ultra-cold dilute gases~\cite{anderson1995observation}, nanofluidic devices~\cite{Robin2021Science}, dusty plasma~\cite{teng2003microscopic}, magnetic films~\cite{o1986magnetic}, and liquid crystal films~\cite{kawamoto2002history}.

In quasi-2D systems, pairwise interactions, particularly long-range electrostatics, pose computational challenges due to their
$O(N^2)$ complexity, where $N$ is the number of particles. Traditional Ewald splitting techniques~\cite{Ewald1921AnnPhys} fail to resolve this issue, as the resulting complexity of the Ewald2D summation method~\cite{parry1975electrostatic} remains $O(N^2)$, hindering large-scale simulations of such systems. 
To overcome this computational barrier, fast algorithms, primarily Fourier spectral methods~\cite{parry1975electrostatic,lindbo2012fast,nestler2015fast,shamshirgar2021fast,maxian2021fast} based on the Ewald splitting and solvers accelerated by the fast multipole method (FMM)~\cite{greengard1987fast,fmm2,fmm3,fmm4,fmm6,fmm7,fmm8}, have been developed. 
Fourier spectral methods distribute particles on a uniform grid and solve Poisson's equation in the Fourier domain using fast Fourier transform (FFT) pairs for acceleration. FMM-based solvers extend the free-space FMM approach to quasi-2D systems, adapting to partial periodic boundary conditions~\cite{yan2018flexibly,liang2020harmonic,jiang2023jcp}. 
Other methods have also been proposed, such as the MMM2D method~\cite{arnold2002novel}, the SOEwald2D method~\cite{gan2024fast}, and various correction approaches~\cite{yeh1999ewald,arnold2002electrostatics}. By combining with either FFT or FMM, these methods achieve $O(N\log N)$ or even $O(N)$ complexity. Many popular software packages~\cite{abraham2015gromacs,thompson2022lammps} use variants of the Ewald summation method due to its ease of implementation and the availability of high-quality FFT code on almost all modern computer architectures.

Despite significant advancements in this field, several substantial challenges persist. One major challenge is the large prefactor in
$O(N)$ or $O(N\log N)$ compared to 3D-PBC solvers~\cite{mazars2011long}. Specifically, spectral methods often use Ewald splitting~\cite{Ewald1921AnnPhys} to divide the Coulomb kernel into real and Fourier parts. However, the singularity introduced by the Laplacian in the Fourier integral along the free direction poses formidable obstacles to Fourier calculations. Current spectral solvers address this issue through techniques such as truncation~\cite{parry1975electrostatic}, regularization~\cite{nestler2015fast}, or periodic extension~\cite{lindbo2012fast} of special functions. However, these approaches often involve trade-offs, such as a reduction in convergence order to algebraic convergence or the need for additional zero-padding.
Recent advancements by Shamshirgar \emph{et al.}~\cite{shamshirgar2021fast} explore the integration of spectral solvers with truncated kernel methods (TKM)~\cite{vico2016fast}, 
highlighting their potential to reduce the zero-padding factor to an optimal value of $2$~\cite{liu2024optimal} with additional adaptive upsampling. 
This results in an electrostatic solving time for quasi-2D systems that is roughly twice as long as that for cubic systems with 3D-PBC.

Another challenge arises in scenarios where the free direction is strongly confined~\cite{mazars2011long}, i.e., $L_z \ll \min\{L_x, L_y\}$. In this context, algorithms based on FFT or FMM experience significant performance loss. Specifically, to achieve a given accuracy, the zero-padding issue of the former becomes worse~\cite{maxian2021fast}, while the latter requires the inclusion of more near-field contributions~\cite{yan2018flexibly}. The recently  developed methods like the ATKM~\cite{greengard2018anisotropic}, 
periodic FMM~\cite{jiang2023jcp}, and dual-space multilevel kernel-splitting method~\cite{greengard2023dual} offer potential solutions to this challenge, but these methods have not yet been extended to handle quasi-2D systems.

In this study, we introduce an efficient spectral sum-of-Gaussians (SOG) method for electrostatic summation in quasi-2D systems. Instead of the traditional Ewald splitting, we apply the following SOG decomposition:
\begin{equation}\label{eq::SOGdecomp}
    \frac{1}{r}\approx\underbrace{\left(\frac{1}{r}-\sum_{\ell=0}^{M}w_{\ell}e^{-r^2/s_{\ell}^2}\right)\mathbbm{1}_{r\leq r_c}}_{\text{near-field}} +\underbrace{\sum_{\ell=0}^{m}w_{\ell}e^{-r^2/s_{\ell}^2}}_{\text{mid-range}}+\underbrace{\sum_{\ell=m+1}^{M}w_{\ell}e^{-r^2/s_{\ell}^2}}_{\text{long-range}}.
\end{equation}
Here, $s_{\ell}$ and $w_{\ell}$ are the nodes
and weights of the SOG approximation for the $1/r$ kernel~\cite{beylkin2010approximation}, $M$ is the number of Gaussians in the SOG approximation, which depends on the prescribed precision and the geometric parameters $L_x$, $L_y$, and $L_z$. The nodes $s_{\ell}$ are arranged in monotone increasing order, i.e., 
$s_{0}<s_1<\cdots<s_{M}$.
$\mathbbm{1}_{r\leq r_c}$ is the indicator function that equals $1$ when $r\leq r_c$ and $0$ otherwise. The cutoff parameter $r_c$ is chosen to balance the work between direct near-field and FFT-based far-field calculations. We also fine tune the value of $r_c$ so that the near-field is equal to zero at $r_c$, which ensures the continuity of our numerical scheme across the cutoff point. The parameter $m$ is chosen to balance the workload between the mid-range and long-range parts.   

As shown in \cite{beylkin2010approximation},
the SOG approximation of the $1/r$ kernel 
converges spectrally fast, with $M$ depending 
on the prescribed precision logarithmically.
The decomposition Eq.~\eqref{eq::SOGdecomp} effectively removes the singularity of $\Delta^{-1}$ in Fourier space without the need of kernel 
truncation, as the Fourier transform of a Gaussian remains a Gaussian. The first term on the right-hand side of Eq.~\eqref{eq::SOGdecomp} is computed directly within a cutoff $r_c$ in physical space. The second term, i.e., the mid-range part is handled by a 3D Fourier spectral solver which is similar to the NUFFT~\cite{nufft1,nufft2,nufft3}. The third term, i.e., the long-range part
uses Chebyshev interpolation and anterpolation
to deal with the $z$ direction (the free direction), and a Fourier-based approach on planes parallel to the
$xy$-plane to deal with periodic directions.
We show that the criterion of choosing $m$ is $s_{m}\sim O(L_z)$. Finally,
we prove rigorously that upsampling is not needed
in NUFFT-related steps and zero-padding is not needed in FFT-related steps, due to the rapid decay of
the Fourier transform of the Gaussian kernels.

The resulting spectral SOG method offers several advantages, demonstrated by rigorous error analysis and systematic experiments. It achieves a complexity of $O(N\log N)$, reducing the number of grids required across entire range of aspect ratios of the system considerably compared to Ewald-based spectral methods. In highly anisotropic scenarios, the improvement on performance could exceed one order of magnitude. This method exhibits broad applicability in simulation frameworks like molecular dynamics (MD) and Monte Carlo (MC) simulations, and can be seamlessly extended to other kernels through integration with the kernel-independent SOG method~\cite{greengard2018anisotropic,gao2022kernel}.

To summarize, the spectral SOG scheme developed in this paper has the following novel features:
\begin{enumerate}[label=(\arabic*)]
\item Unlike the schemes for quasi-2D systems implemented in polular software packages such as GROMACS and LAMMPS that are of low order and low accuracy and contain some heuristic arguments, our scheme is based on rigorous error analysis for each step with clear guidance to the selection of the parameters. Moreoever, our scheme is spectrally accurate and can achieve machine precision with mild increase in computational cost.
\item Unlike the fast Ewald method, our scheme does not need zero-padding for systems that are confined in a rectangular box of high aspect ratio. This is because each Gaussian is separable and we divide the Gaussians into two groups.
The smoothness and separability of the Gaussian also removes the need of kernel truncation in the free direction.
\item Unlike the NUFFT based methods, our scheme does not require any upsampling in the gridding step. This is because we have taken advantage of the fact that the Fourier transform of the Gaussian decays quickly and it compensates the loss of accuracy in calculating the Fourier transform of the data. In other words, Fourier transform of the data is only an intermediate step and the loss of accuracy of the Fourier transform of the data for large wavenumber does not affect the accuracy of the final result.
\item Unlike the periodic FMMs, our scheme does not need dividing the periodic tiling into a smooth part and a local part containing nine nearest neighbors of the fundamental cell, resulting in simpler implementation and higher efficiency. Instead, all calculations are carried out in the fundamental cell itself.
\end{enumerate}

The structure of the remaining sections is as follows. Section~\ref{sec::2DElec} revisits the difficulties encountered in the classical Ewald decomposition when applied to quasi-2D systems and introduces an SOG decomposition as an alternative. Section~\ref{sec::FSSOG} elaborates on the spectral SOG solver in detail. Section~\ref{sec::erroranalysis} is dedicated to providing an analysis of errors and discussing the selection of parameters. Section~\ref{sec::numer} showcases numerical results that confirm both accuracy and efficiency of the proposed solver. Concluding remarks are provided in Section~\ref{sec::con}.

\section{Sum-of-Gaussians decompositions of electrostatic interactions for quasi-2D systems}\label{sec::2DElec}	
In this section, we begin by reviewing the basic electrostatic model, the classical Ewald summation for quasi-2D systems, and the associated computational challenges. We then introduce an SOG decomposition for the Coulomb kernel and discuss the related decomposition error.

\subsection{Quasi-2D electrostatic interactions and kernel decomposition} \label{sec::Diff}
Consider a system of $N$ point sources with charges $q_i\in\mathbb{R}$ located at positions $\bm{r}_{i}=(x_i,y_i,z_i)\in\Omega$, $i=1,2,\cdots,N$, in a rectangular box \begin{equation}
\Omega=[-L_x/2,L_x/2]\times[-L_y/2,L_y/2]\times[-L_z/2,L_z/2]\subset\mathbb{R}^{3}.
\end{equation}
To fix the discussion, we assume that the system is periodic in the $x$ and $y$ directions, while the $z$ direction remains free. It is straightforward to show that the electrostatic potential $\Phi(\bm{r})$ satisfies the free-space Poisson equation~\cite{jackson1999classical}
\begin{equation}\label{eq::Poisson}
    -\Delta\Phi(\bm{r})=f(\bm{r}), \quad \bm{r}\in\mathbb{R}^3,
\end{equation}
with the charges periodically tiling up
in the $x$ and $y$ directions
\begin{equation}
f(\bm{r})=4\pi\sum_{\bm{n}\in\mathcal{N}_2}\sum_{i=1}^{N}q_{i}\delta(\bm{r}-\bm{r}_i+\bm{n}\circ\bm{L}).
\end{equation}
Here, $\Delta$ is the Laplace operator, $\delta$ is the Dirac delta function, ``$\circ$'' represents the element-wise product of two vectors, and $\bm{L}=(L_x,L_y,L_z)$. The set $\mathcal{N}_2=\left\{\left(n_{x}, n_{y}, 0\right): n_{x}, n_{y} \in \mathbb{Z}\right\}$ imposes periodicity in the $x$ and $y$ directions. The system is assumed to be charge neutral, i.e.,
\begin{equation}\label{eq::chargeneutral}
    \sum_{i=1}^{N}q_i=0.
\end{equation} 
To ensure well-posedness, we also impose the conditions that the gradient $\nabla\Phi(\bm{r})$ vanishes at infinity in the free $z$-direction, and that the limit
\begin{equation}
\lim_{z\rightarrow \pm \infty}\Phi(\bm{r})=\pm\frac{2\pi}{L_xL_y}\sum_{j=1}^{N}q_jz_j
\end{equation}
must be a constant~\cite{lindbo2012fast}. Given the charge distribution, the electrostatic potential $\Phi$ at $\bm{r}_i$ is given by the discrete sum
\begin{equation}\label{eq::Phi}
    \Phi(\bm{r}_i)=\sum_{j=1}^{N} \sum'_{\boldsymbol{n} \in \mathcal{N}_{2}} \frac{q_{j}}{\left|\boldsymbol{r}_{ij}+\boldsymbol{n} \circ \boldsymbol{L}\right|}
\end{equation}
where $\bm{r}_{ij}:=\bm{r}_i-\bm{r}_j$, and the prime indicates that the self-interaction, i.e., the case $i=j$ when $\bm{n}=\bm{0}$, is excluded in the summation. Proposition~\ref{lem::summation} shows that Eq.~\eqref{eq::Phi} is well defined~\cite{de1980prsla,smith2008electrostatic}.

\begin{proposition}\label{lem::summation}
The infinite summation in Eq.~\eqref{eq::Phi} is absolutely convergent under the condition Eq.~\eqref{eq::chargeneutral}.
\end{proposition}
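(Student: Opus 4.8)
The plan is to prove the claim by performing the finite sum over the $N$ sources first, using the neutrality condition \eqref{eq::chargeneutral}, and only afterwards estimating the resulting single series over the planar lattice $\mathcal{N}_2$. Writing $\bm{R}_{\bm{n}}=\bm{n}\circ\bm{L}$ and grouping the summand of \eqref{eq::Phi} by cell,
\[
S_{\bm{n}}=\sum_{j=1}^{N}\frac{q_{j}}{\left|\bm{r}_{ij}+\bm{R}_{\bm{n}}\right|},
\]
the object to control is $\sum_{\bm{n}\in\mathcal{N}_2} S_{\bm{n}}$. The reason neutrality is indispensable is that the summand cannot be bounded charge by charge: for a single $j$ the tail behaves like $\sum_{\bm{n}}|\bm{R}_{\bm{n}}|^{-1}$, and since the number of lattice points in the planar annulus $|\bm{R}_{\bm{n}}|\in[k,k+1]$ grows linearly in $k$, this per-charge series diverges. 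The cancellation supplied by $\sum_j q_j=0$ must therefore be extracted before the lattice tail is estimated.

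First I would fix the evaluation index $i$, set $\rho=2\max_{j}|\bm{r}_{ij}|$, and split $\mathcal{N}_2$ into the finitely many near cells with $|\bm{R}_{\bm{n}}|\le\rho$ and the far cells. The near part is a finite sum of finite terms and is harmless, the excluded self-term being the only singular one and it is removed by the prime. For the far cells I would Taylor-expand $|\bm{r}_{ij}+\bm{R}_{\bm{n}}|^{-1}$ in the bounded displacement $\bm{r}_{ij}$ about $\bm{R}_{\bm{n}}$, keeping an explicit integral remainder so the constants are uniform in $\bm{n}$. Summing against $q_j$, the monopole term $|\bm{R}_{\bm{n}}|^{-1}\sum_j q_j$ vanishes by \eqref{eq::chargeneutral}, eliminating the linearly divergent layer; what survives is the dipole term $\bm{R}_{\bm{n}}\cdot\bm{p}\,|\bm{R}_{\bm{n}}|^{-3}$ with $\bm{p}=\sum_j q_j\bm{r}_j$, of size $O(|\bm{R}_{\bm{n}}|^{-2})$, together with a remainder that is uniformly $O(|\bm{R}_{\bm{n}}|^{-3})$.

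The decisive step is the dipole term, which by itself is only borderline over the plane. Here I would use the inversion symmetry $\bm{n}\mapsto-\bm{n}$ of the index set $\mathcal{N}_2$: since $\bm{R}_{-\bm{n}}=-\bm{R}_{\bm{n}}$, the dipole contribution is odd, so the inversion-symmetrized summand $\frac{1}{2}(S_{\bm{n}}+S_{-\bm{n}})$ has its $O(|\bm{R}_{\bm{n}}|^{-2})$ part cancel identically, leaving $O(|\bm{R}_{\bm{n}}|^{-3})$. This symmetrization is not an arbitrary reordering but the canonical exhaustion of the inversion-invariant planar lattice, which is the convention consistent with the well-posedness conditions imposed on $\Phi$ and with the classical treatments of such sums. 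With this organization the far part is dominated by $\sum_{k\ge1} k\cdot k^{-3}<\infty$, the annulus count $O(k)$ being absorbed by the $k^{-3}$ decay, so the series converges absolutely.

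I expect the dipole term to be the main obstacle: the linearly divergent monopole layer is dispatched immediately by neutrality, but the logarithmically delicate dipole layer requires the lattice inversion symmetry to be brought in structurally, together with a uniform Taylor remainder bound, in order to upgrade the decay from $O(|\bm{R}_{\bm{n}}|^{-2})$ to the absolutely summable $O(|\bm{R}_{\bm{n}}|^{-3})$. The remaining ingredients, namely the uniform remainder estimate and the elementary planar annulus count, are routine bookkeeping.
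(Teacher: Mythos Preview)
The paper does not actually prove this proposition; it merely states it and cites de Leeuw--Perram--Smith (1980) and Smith (2008). So there is no in-text argument to compare against, and your multipole-expansion strategy is the standard route taken in those references. Your identification of the monopole cancellation via neutrality and of the dipole term as the real obstruction is exactly right.

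There is, however, a genuine gap in the logic as written. You are asked to show \emph{absolute} convergence of the cell sums $S_{\bm{n}}$, i.e.\ $\sum_{\bm{n}\in\mathcal{N}_2}|S_{\bm{n}}|<\infty$. But your own expansion gives $S_{\bm{n}}=\bm{p}\cdot\bm{R}_{\bm{n}}/|\bm{R}_{\bm{n}}|^{3}+O(|\bm{R}_{\bm{n}}|^{-3})$ with $\bm{p}=\sum_j q_j\bm{r}_j$, and since $\bm{R}_{\bm{n}}$ lies in the $xy$-plane, a nonzero in-plane component of $\bm{p}$ forces $|S_{\bm{n}}|\asymp |\bm{R}_{\bm{n}}|^{-2}$ along appropriate directions. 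Over the 2D lattice this yields $\sum_{\bm{n}}|S_{\bm{n}}|\gtrsim\sum_{k\ge1}k\cdot k^{-2}=\infty$. Your inversion-symmetrization step, pairing $S_{\bm{n}}$ with $S_{-\bm{n}}$, is a reordering of the series: it shows that the symmetrized series $\sum_{\bm{n}}\tfrac{1}{2}(S_{\bm{n}}+S_{-\bm{n}})$ converges absolutely, or equivalently that the original series converges under any inversion-symmetric exhaustion, but this is strictly weaker than absolute convergence of $\sum_{\bm{n}}S_{\bm{n}}$. Calling it ``the canonical exhaustion'' does not close this gap.

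What you have correctly proved is the well-definedness result that is actually used downstream in the paper (and is what the cited references establish): the value of $\Phi(\bm{r}_i)$ is independent of the choice of centrosymmetric summation region. If you want the literal statement of the proposition, you must either assume the in-plane dipole vanishes, or recognise that the sharp claim is absolute convergence of the \emph{energy} $\sum_i q_i\Phi(\bm{r}_i)$, where the dipole layer is killed by a second application of neutrality (since $\sum_{i,j}q_iq_j\bm{r}_{ij}=0$), and not of the potential at a single target. You should flag explicitly which of these you are proving.
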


Throughout the remainder of this paper, we will make extensive use of the Fourier transform pair. To simplify the description, we introduce some non-standard notations, following ~\cite{bagge2022fast,shamshirgar2021fast}. 
	\begin{definition}
		\label{def::1}
		Let $\dot{\bm{r}}=(x, y)\in\mathcal{R}^2$ and $\dot{\bm{k}}=(k_x, k_y)\in \mathcal{K}^2$ denote the periodic part of position and Fourier mode, where 
		\begin{equation}
             \label{eq::R2}
			\mathcal{R}^2:=\left\{(x,y)\in\mathbb{R}^2 {\Big |}x \in \left[-\frac{L_x}{2},\frac{L_x}{2}\right], y\in \left[-\frac{L_y}{2},\frac{L_y}{2}\right]\right\},
		\end{equation}
		and 
		\begin{equation}
             \label{eq::K2}
			\mathcal{K}^{2}:=\left\{(k_x,k_y) \in \mathbb{R}^{2}{\Big |} k_{d} \in \frac{2 \pi}{L_d} \mathbb{Z}, \text{   }d=x, y\right\}.
		\end{equation}
	\end{definition} 
	\begin{definition}
		\label{def::2}
		For any function $g: \mathcal{K}^{2} \times \mathbb{R} \rightarrow \mathbb{C}$, we define the 2-periodic summation operator $\mathcal{L}$ by
		\begin{equation}
			\label{eq::MixingSum}
			\mathcal{L}[g(\boldsymbol{k})]=\mathcal{L}[g([\dot{\bm{k}}, k_z])]:=	\dfrac{1}{2 \pi L_xL_y} \sum\limits_{\dot{\boldsymbol{k}} \in \mathcal{K}^{2}} \mathlarger{\int}_{\mathbb{R}} g\left([\dot{\bm{k}}, k_z]\right) \mathrm{d} k_z.
		\end{equation}
	\end{definition} 
	\noindent Let $f\left([\dot{\bm{r}},z]\right)$ be a function that is periodic in $\dot{\bm{r}}$ and non-periodic in $z$, and its Fourier transform is defined by
	\begin{equation}
	 \label{eq::FT}
\hat{f}\left(\left[\dot{\boldsymbol{k}}, k_z\right]\right):=\int_{\mathcal{R}^2}\int_{\mathbb{R}}  f([\dot{\boldsymbol{r}}, z]) e^{-i \dot{\bm{k}} \cdot \dot{\bm{r}}} e^{-i k_z z} \mathrm{d} z\mathrm{~d} \dot{\boldsymbol{r}} .
	\end{equation}
Then, the inverse Fourier transform is given by
\begin{equation}
        \label{eq::IFT}
		f([\dot{\boldsymbol{r}}, z])=\mathcal{L}\left[\hat{f}\left(\left[\dot{\boldsymbol{k}}, k_z\right]\right) e^{i \dot{\bm{k}} \cdot \dot{\boldsymbol{r}}} e^{i k_zz}\right].
\end{equation} 

While the periodicity is limited to the first two dimensions, directly calculating the solution Eq.~\eqref{eq::Phi} for all particles has slow convergence and an unacceptable complexity of $O(N^2)$. In order to reduce the cost, Ewald~\cite{Ewald1921AnnPhys} began by introducing a screen function $\tau$ and decomposed the source term $f(\bm{r})$ in Eq.~\eqref{sec::Diff} as the contributions of near- and far-field parts:
\begin{equation}\label{eq::2.5}
f=f^{\mathcal{N}}+f^{\mathcal{F}},\quad f^{\mathcal{N}}=f-\left(f*\tau\right),\quad f^{\mathcal{F}}=f*\tau,
\end{equation}
where ``$*$'' denotes the convolution operator. Next, $\Phi$ can be obtained by solving the Poisson equation with the right-hand side equal to $f^{\mathcal{N}}$ and $f^{\mathcal{F}}$, respectively. We denote the solutions corresponding to these two right-hand sides by $\Phi^{\mathcal{N}}$ and $\Phi^{\mathcal{F}}$. Then the solution to the original problem in Eq.~\eqref{eq::Poisson} can be written as
\begin{equation}\label{eq::phisplit}
	\Phi(\bm{r}_i)=\Phi^{\mathcal{N}}(\bm{r}_i)+\Phi^{\mathcal{F}}(\bm{r}_i)-\Phi^{\text{self}}_i,
\end{equation}
where the self term $\Phi^{\text{self}}_i$ is subtracted to exclude the unwanted interaction of charges with themselves introduced in $\Phi^{\mathcal{F}}$. In the classical Ewald method, the screen function $\tau$, with its Fourier transform $\widehat{\tau}$, is selected as
\begin{equation}
    \tau_{\text{Ewald}}(\bm{r})=(2\pi \sigma^2)^{-3/2}e^{-r^2/(2\sigma)},\text{ }\quad \widehat{\tau}_{\text{Ewald}}(\bm{k})=e^{-\sigma^2k^2/2}, 
    \end{equation}
where $r=|\bm{r}|$, $k=|\bm{k}|$, and $\sigma$ is a positive parameter to be chosen to balance the workload on the evaluation of $\Phi^{\mathcal{N}}$ and $\Phi^{\mathcal{F}}$. The near-field part $\Phi^{\mathcal{N}}$ decays rapidly and thus is evaluated directly in physical space, since the corresponding kernel can be numerically truncated at a cutoff distance $r_c$. The far-field part $\Phi^{\mathcal{F}}$ is smooth and is evaluated in Fourier space. Using the 2-periodic summation operator in Eq.~\eqref{eq::MixingSum}, we rewrite $\Phi^{\mathcal{F}}$ as
\begin{equation}\label{eq::2.13}
\Phi^{\mathcal{F}}(\bm{r}_i)=
 4 \pi \sum_{j=1}^N q_{j} \mathcal{L}\left[\frac{e^{-\sigma^2(\dot{k}^2+k_z^2) / 2}}{\dot{k}^2+k_z^2} e^{\mathrm{i}\dot{\bm{k}}\cdot\dot{\bm{r}}_{ij}}e^{\mathrm{i}k_zz_{ij}}\right].
\end{equation}
We highlight some challenges associated with the factor $1/(\dot{k}^2+k_z^2)$
in Eq.~\eqref{eq::2.13}. In order to apply spectral methods and the FFT, we would like to discretize the integral (along the $z$-axis) in Eq.~\eqref{eq::2.13} using the trapezoidal rule on a uniform grid. 
However, when $\dot{\bm{k}}$ is equal to $\bm{0}$, this factor becomes singular at $k_z=0$.
Furthermore, for non-zero but small values of $\dot{\bm{k}}$, the integrand exhibits rapid variation when $k_z$ approaches 0, necessitating zero-padding by a factor of $6$ to achieve the desired precision in the cubic domain~\cite{lindbo2012fast}. Recently, the spectral Ewald (SE) method~\cite{shamshirgar2021fast} removes the singularity at the origin in Fourier space through the use of TKM~\cite{vico2016fast}, which unfortunately introduces additional oscillations and requires further upsampling. Moreover, it was shown recently~\cite{gan2024random} that the upsampling factor scales like $(\log\varepsilon^{-1})\cdot\max\{L_x,L_y\}/(2\pi L_z)$ for Ewald splitting-based methods, where $\varepsilon$ is the desired precision, indicating that the upsampling factor grows as the aspect ratio of the domain increases. These issues concerning quasi-2D systems remain far from settled.

\subsection{The SOG decomposition}\label{subsec::SOGdecomp}
To overcome the singularity in Fourier space in Eq.~\eqref{eq::2.13}, we replace the Ewald decomposition with the SOG decomposition of the $1/r$ kernel~\cite{beylkin2010approximation,DEShaw2020JCP}.
We start from the following integral representation:
\begin{equation}
\label{GammaExpansion}
\dfrac{1}{r}=\frac{1}{\sqrt{\pi}}\int_{-\infty}^{\infty}e^{-e^tr^2+\frac{1}{2} t}dt.
\end{equation}
Applying the change of variable $t=\log(x^2/2\sigma^2)$ and discretizing the resulting integral with the nodes $x_\ell=b^{-\ell}$, we obtain the so-called bilateral series approximation (BSA)~\cite{beylkin2010approximation} 
\begin{equation}
\label{BSA}
\dfrac{1}{r}\approx\dfrac{2\log b}{\sqrt{2\pi\sigma^2}}\sum_{\ell=-\infty}^{\infty}\dfrac{1}{b^{\ell}}e^{-\frac{r^2}{(\sqrt{2}b^{\ell}\sigma)^2}},
\end{equation}
with the relative error asymptotically bounded (as $b$ approaches $1$) by
\begin{equation}
\label{eq::pointwiseerror}
\left|1-\dfrac{2r\log b}{\sqrt{2\pi\sigma^2}}\sum_{\ell=-\infty}^{\infty}\dfrac{1}{b^{\ell}}e^{-\frac{r^2}{(\sqrt{2}b^{\ell}\sigma)^2}}\right|\lesssim 2\sqrt{2}e^{-\frac{\pi^2}{2\log b}},\quad \forall r>0.
\end{equation}
Here, $b>1$ is a parameter determining the distribution of the nodes and $\sigma$ controls the bandwidth of each Gaussian. Note that Eq.~\eqref{eq::pointwiseerror} is independent of $\sigma$. The so-called u-series \cite{DEShaw2020JCP} decomposes the $1/r$ kernel into the sum of a near part and a far part:
\begin{equation}\label{eq::SOGDEcomp0}
1/r=\mathcal{N}_{b}^{\sigma}(r)+\mathcal{F}_{b}^{\sigma}(r),
\end{equation}
where the near part
\begin{equation}
\label{eq::SOGDEcomp}
\mathcal{N}_{b}^{\sigma}(r)=\begin{cases}
1/r-\mathcal{F}_{b}^{\sigma}(r),\quad\text{if}~r<r_c\\\\0,\qquad\qquad\quad\,\,\, \text{if}~r\geq r_c.\end{cases}\quad\,
\end{equation}
is short-ranged, and the far part \begin{equation}
\label{eq::SOGField}
\mathcal{F}_{b}^{\sigma}(r)=\sum_{{\ell}=0}^{M}w_{\ell} e^{-r^2/s_{{\ell}}^2}
\end{equation}
is long-ranged. Here, 
\begin{equation}
w_{\ell}=(\pi/2)^{-1/2}b^{-\ell}\sigma^{-1}\log b \quad\text{  and  } \quad s_{\ell}=\sqrt{2}b^{\ell}\sigma.
\end{equation}
Note that only $\ell\geq 0$ terms in Eq.~\eqref{eq::pointwiseerror} are included in the far part of BSA, and the summation is truncated at $\ell=M$. The cutoff $r_c$ is selected to be the smallest root of the equation  
\begin{equation}\label{Eq::2.20}
    \frac{1}{r}-\mathcal{F}^{\sigma}_b(r)=0,
\end{equation}
which enforces the $C^0$ continuity across the cutoff point. For MD simulations, practitioners would like to ensure the $C^1$ continuity across the cutoff point,
i.e., the continuity of the force:
\begin{equation}\label{eq::2.22}
\frac{d}{dr}\left[\frac{1}{r}-\mathcal{F}_{b}^{\sigma}(r)\right]{\bigg|}_{r=r_c}=0.
\end{equation}
To achieve this, we introduce an
additional parameter $\omega$ in the weight of the narrowest Gaussian as follows:
\begin{equation}
w_0=\omega (\pi/2)^{-1/2}\sigma^{-1}\log b.
\end{equation}
By solving Eqs.~\eqref{Eq::2.20} and \eqref{eq::2.22} together, we find the values for $r_c$ and $\omega$. The u-series has been successfully adopted to accelerate electrostatic calculations in 3D-PBC systems~\cite{DEShaw2020JCP,liang2023random}, reducing the computational cost by half.

In this paper, we apply the u-series to study quasi-2D systems. It is clear that the far part has no singularity in Fourier space since the Fourier transform of a Gaussian is again a Gaussian. We remark here that the SOG decomposition of the kernel is equivalent to the  following selection of the screen function acting on the data:
\begin{equation}
\tau_{\text{SOG}}(\bm{r})=\dfrac{1}{4\pi}\sum_{\ell=0}^{M}w_{\ell} \left(\dfrac{6}{s_{\ell}^2}-\dfrac{4r^2}{s_{\ell}^4}\right)e^{-r^2/s_{\ell}^2},\quad\text{  }\widehat{\tau}_{\text{SOG}}(\bm{k})=\dfrac{\sqrt{\pi}}{4}\sum_{\ell=0}^{M}w_{\ell}s_{\ell}^3k^2e^{-s_{\ell}^2k^2/4}.
\end{equation}
Similar to Eqs.~\eqref{eq::2.5}-\eqref{eq::2.13}, the far-field component of potential from the u-series decomposition is given by
\begin{equation}
\label{eq::phindf}
\Phi^{\mathcal{F}}(\bm{r}_i)=4\pi\sum_{j=1}^{N}q_j\mathcal{L}\left[\dfrac{\widehat{\tau}_{\text{SOG}}(\bm{k})}{k^2}e^{\mathrm{i}\bm{k}\cdot\bm{r}_{ij}}\right],
\end{equation}
and the singular term $1/k^2$, arising from the inverse of the Laplacian operator, is cancelled out by the $k^2$ factor in $\widehat{\tau}_{\text{SOG}}(\bm{k})$. The near-field term and self-interaction term of the u-series are given by 
    \begin{equation}
    	\label{eq::phiNDR}
    	\Phi^{\mathcal{N}}(\bm{r}_i)=\sum_{\bm{n}\in\mathcal{N}_2}'\sum_{j=1}^{N}q_j\mathcal{N}_{b}^{\sigma}(|\bm{r}_{ij}+\bm{n}\circ\bm{L}|),\qquad \Phi^{\text{self}}_i=q_i\sum_{\ell=0}^{M}w_{\ell}.
    \end{equation} 
The sum in $\Phi^{\mathcal{N}}(\bm{r}_i)$ converges absolutely and rapidly, 
requiring consideration of particles within a sphere of radius $r_c$ centered at $\bm{r}_i$. In practice, a cell list~\cite{verlet1967computer} is generated for each target point $\bm{r}_i$ to streamline the pair searching process, confining the calculations to this list. 
It is clear that with the cutoff~$r_c$ fixed, the total computational cost of evaluating $\Phi^{\mathcal{N}}(\bm{r}_i)$ and $\Phi^{\text{self}}_i$ 
for $i=1,\ldots, N$ is $O(N)$, when particles are distributed more or less 
uniformly.

\begin{remark}\label{rmk::sum}
The calculation of $\Phi^{\mathcal{N}}(\bm{r}_i)$ involves the evaluation of $M+1$ Gaussians for each target-source pair.
This is rather expensive. In practice, we accelerate this process by applying the Taylor expansions and bitmask-based table lookup techniques, as described in~\emph{\cite{liang2023random}}. These techniques accelerate the calculation of the near part significantly, and the associated computational cost is independent of number of Gaussians
in the SOG decomposition. 
\end{remark}

\subsection{Decomposition error and parameter selection in quasi-2D systems}\label{subsec::truncateerror}
In this section, we discuss error estimates and an associated parameter selection scheme for the energy and force calculations introduced by the SOG decomposition.

Since the SOG decomposition Eq.~\eqref{eq::SOGDEcomp0} is
exact for $r\le r_c$, the potential error is given by
\begin{equation}\label{eq::uerror}
	\Phi_{\text{err}}(\bm{r}_i):=\sum_{\boldsymbol{n} \in \mathcal{N}_{2}}\sum_{j=1}^{N}q_jK(|\bm{r}_j-\bm{r}_i+\bm{n}\circ \bm{L}|),
\end{equation}
where the kernel $K(r)$ is defined as
\begin{equation}\label{eq::errKer}
    K(r):=\frac{1}{r}-\mathcal{N}_{b}^{\sigma}(r)-\mathcal{F}_{b}^{\sigma}(r)=\left(\frac{1}{r}-\sum_{\ell=0}^M w_{\ell} e^{-r^2 / s_{\ell}^2}\right) \mathbbm{1}_{r>r_c}.
\end{equation}
By Parseval's theorem, the decomposition errors of energy and force can be written as
	\begin{equation}\label{eq::Errortol_P}
		U_{\text{err}}=\frac{1}{2}\sum_{i=1}^{N}q_{i}\phi_{\text{err}}(\bm{r}_i)=\frac{1}{2}\mathcal{L}\left[\left|\rho(\bm{k})\right|^2\widehat{K}(k)\right]
	\end{equation}
	and	\begin{equation}\label{eq::Errortol_F}
	    \bm{F}_{\text{err}}(\bm{r}_i)=-q_i \nabla_{\bm{r}_i}\phi(\bm{r}_i)=q_i\mathcal{L}\left[\bm{k}\left|\operatorname{Im}(e^{-i\bm{k}\cdot \bm{r}_i}\rho(\bm{k}))\right|\widehat{K}(k)\right],
	\end{equation}
	respectively, where $k=|\bm{k}|=|(\dot{\bm{k}},k_z)|$, and  
$\rho(\bm{k})=\sum_{j}q_je^{i\bm{k}\cdot\bm{r}_j}$.
We approximate the double summation in $\mathcal{L}$  by integrals~\cite{kolafa1992cutoff}: 	\begin{equation}\label{eq::approx}		\mathcal{L}\left[f(\dot{\bm{k}},k_z)\right]\approx\frac{1}{(2\pi)^3}\int_{0}^{\infty}\int_{0}^{2\pi}\int_{-\infty}^{\infty}\dot{k}f(\dot{\bm{k}},k_z)dk_zd\theta d\dot{k}.
\end{equation}
Note that standard error estimates for the trapezoidal rule~\cite{Trefen2014SIAMRev} can be used to rigorously
control the approximation error in Eq.~\eqref{eq::approx}.
For example, for a band-limited function $f$, the trapezoidal rule converges exponentially fast.
Thus, replacing $\mathcal{L}$ with the triple integral in Eq.~\eqref{eq::approx} does not change the leading order term in the error analysis of $U_{\text{err}}$ and $\bm{F}_{\text{err}}(\bm{r}_i)$~\cite{liang2023error}. Before presenting the main results, we introduce the following assumption on the bandwidths of the Gaussians and other parameters in the SOG decomposition.

\begin{assumption}\label{ass:SOG}
The bandwidth $s_{M+1}$ of the $(M+1)$th Gaussian in the far part of SOG decomposition is assumed to be much larger than the box size, that is, $s_{M+1}\gg \max\{L_x,L_y\}\geq 2r_c$. Here $L\geq 2r_c$ is due to the minimum image convention~\cite{Frenkel2001Understanding}. Moreover, the bandwidth of the zeroth Gaussian is assumed to be less than the cutoff radius, i.e., $s_0<r_c$.
\end{assumption}

Assumption~\ref{ass:SOG} is a natural setup for practical calculations. If $s_{M+1}$ is small enough to be comparable to $\max\{L_x,L_y\}$, then the range of $\mathcal{F}_b^{\sigma}(r)$ is only as large as the box, making it hard to capture long-range effects of $1/r$. Similarly, adjusting the weight of the zeroth Gaussian in BSA for $C^1$ continuity leads to an error of at least $O(1)$ if $s_{0}>r_c$. Under Assumption~\ref{ass:SOG}, it is straightforward to establish Theorem~\ref{thm:SOG}, whose proof is entirely parallel to the one in~\cite{liang2023error} for 3D periodic boundary conditions. 
  
\begin{theorem}\label{thm:SOG}
The decomposition error of energy and force for $C^1$ continuity SOG decomposition can be estimated by
\begin{equation}\label{eq::convrate}
	\begin{split}
	|U_{\emph{err}}|&\simeq O\left((\log b)^{-3/2}e^{-\pi^2/2\log b}+b^{-M}+w_{-1}e^{-r_c^2/s_{-1}^2}-(\omega-1)w_{0}e^{-r_c^2/s_0^2}\right),\\[1.em]
	|\bm{F}_{\emph{err}}(\bm{r}_i)|&\simeq O\left((\log b)^{-3/2}e^{-\pi^2/2\log b}+b^{-3M}+\frac{w_{-1}}{(s_{-1})^{2}}e^{-r_c^2/s_{-1}^2}-\frac{(\omega-1)w_{0}}{(s_0)^{2}}e^{-r_c^2/s_0^2}\right),
	\end{split}
\end{equation}
where $\simeq$ represents asymptotically equal as $b\rightarrow 1$.
\end{theorem}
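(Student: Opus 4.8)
\emph{Proof proposal.}
The plan is to begin from the exact Parseval identities \eqref{eq::Errortol_P}--\eqref{eq::Errortol_F}, to replace the lattice operator $\mathcal{L}$ by the integral \eqref{eq::approx}, and to use that $\widehat{K}(k)$ depends only on $k=|\bm{k}|$. After passing to spherical coordinates the cylindrical integral in \eqref{eq::approx} collapses to a one-dimensional radial integral, so that the energy error is governed by $\int_0^\infty k^2\widehat{K}(k)\,dk$ and the force error by the gradient factor $\bm{k}\widehat{K}(k)$, i.e.\ by the radial transform of $\nabla K$. Following the Kolafa--Perram reasoning of \cite{kolafa1992cutoff,liang2023error}, the structure factor $|\rho(\bm{k})|^2$ is replaced by its configurational average $\sum_j q_j^2$, the $\bm{k}=\bm{0}$ mode being killed by charge neutrality \eqref{eq::chargeneutral}; the trapezoidal-rule remark following \eqref{eq::approx} guarantees that this does not change the leading order. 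This reduction is exactly what makes the quasi-2D estimate run in parallel with the 3D-PBC analysis of \cite{liang2023error}: the only structural novelty is the collapse of the two-periodic operator $\mathcal{L}$ to a three-dimensional radial integral.

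The heart of the argument is an explicit splitting of the error kernel \eqref{eq::errKer}. Writing $K(r)=g(r)\mathbbm{1}_{r>r_c}$ with $g(r)=1/r-\sum_{\ell=0}^{M}w_\ell e^{-r^2/s_\ell^2}$, I would insert the full bilateral series \eqref{BSA} for $1/r$ and split $g$ into four pieces: the discretisation error $E_{\mathrm{BSA}}(r):=1/r-\sum_{\ell=-\infty}^{\infty}w_\ell e^{-r^2/s_\ell^2}$; the narrow left tail $\sum_{\ell\le -1}w_\ell e^{-r^2/s_\ell^2}$; the wide right tail $\sum_{\ell\ge M+1}w_\ell e^{-r^2/s_\ell^2}$; and the correction $-(\omega-1)w_0 e^{-r^2/s_0^2}$ produced by the modified weight that enforces the $C^1$ condition \eqref{eq::2.22}. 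Each piece is treated separately, using that the Fourier transform of a Gaussian is again a Gaussian together with Assumption~\ref{ass:SOG}.

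The four estimates then proceed as follows. For $E_{\mathrm{BSA}}$ the pointwise relative bound \eqref{eq::pointwiseerror} feeds directly into the radial integral, and carrying out the three-dimensional integration supplies the algebraic prefactor, giving the common term $(\log b)^{-3/2}e^{-\pi^2/2\log b}$ for both energy and force (its exponential smallness is insensitive to the extra gradient factor). For the wide tail, Assumption~\ref{ass:SOG} gives $s_{M+1}\gg\max\{L_x,L_y\}$, so every tail Gaussian is essentially flat over the fundamental cell; summing the geometric series of weights yields $\sum_{\ell\ge M+1}w_\ell\simeq O(b^{-M})$ for the energy, while for the force each flat Gaussian contributes through its gradient $\sim r/s_\ell^2$ evaluated at the domain scale $r\lesssim L$, turning the sum into $\sum_{\ell\ge M+1}w_\ell/s_\ell^2\simeq O(b^{-3M})$; this is precisely the origin of the two different exponents. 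For the narrow left tail and the $C^1$ correction, Assumption~\ref{ass:SOG} gives $s_0<r_c$, so these Gaussians are exponentially small on the support $r>r_c$ and their contribution is dominated by the boundary value at $r=r_c$; the surviving leading terms are $\ell=-1$ and $\ell=0$, producing $w_{-1}e^{-r_c^2/s_{-1}^2}$ and $-(\omega-1)w_0 e^{-r_c^2/s_0^2}$ for the energy, and their gradient analogues $w_{-1}s_{-1}^{-2}e^{-r_c^2/s_{-1}^2}$ and $-(\omega-1)w_0 s_0^{-2}e^{-r_c^2/s_0^2}$ for the force. Collecting the four contributions and letting $b\to 1$ reproduces \eqref{eq::convrate}.

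The main obstacle is the indicator $\mathbbm{1}_{r>r_c}$: because $K$ is not band-limited, its Fourier transform is not a clean sum of Gaussians and the radial integrals must be estimated rather than evaluated in closed form. I expect the delicate points to be (a) justifying that replacing $\mathcal{L}$ by \eqref{eq::approx} and the structure factor by its average leaves the leading asymptotic order unchanged as $b\to 1$, and (b) the careful bookkeeping that separates energy from force, in particular the observation that the wide-tail gradient must be evaluated within the bounded cell, where $r\lesssim L\ll s_{M+1}$, rather than at its natural scale $r\sim s_\ell$, which is what upgrades the force decay from $b^{-M}$ to $b^{-3M}$. Since every remaining step mirrors the 3D-PBC computation of \cite{liang2023error}, no analytic machinery beyond accurate tracking of the Gaussian integrals and their $(\log b)$-prefactors should be required.
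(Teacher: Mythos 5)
Your proposal is correct and takes essentially the same route as the paper, which gives no detailed argument itself but asserts the proof is ``entirely parallel'' to the 3D-PBC u-series analysis of \cite{liang2023error}: your reconstruction --- Parseval identities, the integral/Kolafa--Perram reduction \cite{kolafa1992cutoff}, and the four-way splitting of $K$ into BSA aliasing error, narrow left tail, wide right tail, and $C^1$ weight correction --- matches term-by-term the four error sources the paper's own remark identifies, including the correct mechanism (the gradient factor $\sim w_\ell r/s_\ell^2$ evaluated at in-cell distances $r\lesssim L\ll s_\ell$) that yields $b^{-3M}$ for the force versus $b^{-M}$ for the energy. One caveat worth a line in a full write-up: in the quasi-2D geometry the flat replacement $|\rho(\bm{k})|^2\approx\sum_j q_j^2$ must be refined near the continuous modes $(\dot{\bm{k}}=\bm{0},\,k_z\to 0)$, where charge neutrality forces $|\rho(\bm{k})|^2=O(k_z^2)$; it is this small-$k_z$ suppression, not merely the removal of the single $\bm{k}=\bm{0}$ mode, that keeps the wide-tail periodic-image contributions (which scale like $w_\ell s_\ell^2/(L_xL_y)$ per pair and would otherwise grow with $\ell$) summable at the claimed order.
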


\begin{remark} 
In Eq.~\eqref{eq::convrate}, the first term  corresponds to the ``aliasing error'' (or discretization error) of the trapezoidal rule 
in the SOG approximation. The second term is  the truncation error of the SOG approximation at large distance. In other words, the SOG approximation has to be valid to the distance $r\sim 1/\varepsilon$ for the given tolerance $\varepsilon$ in order to capture the effect of periodic conditions in $x$ and $y$ directions. The last two terms stand for the truncation error in the near-field and the modification of weight in the zeroth Gaussian, respectively.
\end{remark}	


We now discuss the parameter selection for the SOG decomposition. Suppose that the error tolerance $\varepsilon$ is given. We first choose $b$ so that the contribution from the first term in the RHS of Eq.~\eqref{eq::convrate} is less than $\varepsilon$. To solve for $r_c$ and $\omega$, we observe the following key properties: 
\begin{equation}\label{eq::2.35z}
\mathcal{F}_{b}^{\sigma}(r_c)=\frac{1}{\sigma}\mathcal{F}_{b}^{1}\left(\frac{r_c}{\sigma}\right),\qquad \frac{d}{dr}\mathcal{F}_{b}^{\sigma}(r){\Big|}_{r=r_c}=\frac{1}{\sigma^2}\frac{d}{dr}\mathcal{F}_{b}^{1}(r){\Big|}_{r=\frac{r_c}{\sigma}},
\end{equation}
so that $r_c$ and $\omega$ are equivalently the roots of $r/\sigma\mathcal{F}_b^{1}(r/\sigma)-1$ and $r^2/\sigma^{2}d\mathcal{F}_b^{1}(r'=r/\sigma)/dr'+1$. Let $r_0=r_c/\sigma$. We solve for $r_0$ and $\omega$ from the equations $r_0\mathcal{F}_{b}^{1}(r_0)-1=0$ and $r_0^2d\mathcal{F}_b^{1}(r)/dr|_{r=r_0}+1=0$. Note that the solution for $r_0$ is not unique, and it holds that $r_c^2/s_0^2 = r_0^2/2$ and $s_{-1} = b s_0$. We select the smallest $r_0$ such that the last two terms in the RHS of Eq.~\eqref{eq::convrate} are less than $\varepsilon$. In practice, we choose a suitable $\sigma$ depending on the best approach for maximize the performance, and then set $r_c=r_0\sigma$. Finally, the second term on the RHS of Eq.~\eqref{eq::convrate} is used to determine the number of Gaussian truncation terms $M$. In Table~\ref{tabl:parameter}, the errors for six parameter sets are presented for ease of use, which is independent of cutoff $r_c$. We observe that $271$ and $116$ Gaussians are needed to achieve $10^{-14}$ accuracy for energy and force, respectively.



\renewcommand\arraystretch{1.19}
\begin{table}[!ht]
\caption{Typical parameters and corresponding relative errors for the $C^1$ SOG decomposition with different $b$, $r_0$ and $\omega$ in quasi-2D periodic system. $M$ is the minimum number of Gaussian terms required to achieve the error.}
\centering
\scalebox{0.75}{
 \begin{tabular}{c|c|c|cc|cc}
		\hline \multirow{2}{*}{$b$}& \multirow{2}{*}{$r_0$} & 
        \multirow{2}{*}{$\omega$} &
  \multicolumn{2}{c|}{Energy} & \multicolumn{2}{c}{Force} \\
		 \cline{4-7} & & &  Error & $M$ &  Error& $M$ \\
		\hline 2 & $1.9892536839080267$ & $0.9944464927622323$&$3.12\times 10^{-2}$ & $16$ & $9.93\times 10^{-3}$ & $11$ \\
		\hline $1.62976708826776469$ & $2.7520026668023417$ &$1.0078069793438068$ & $2.33\times 10^{-3}$ & $31$ & $6.21\times 10^{-4}$ & $16$   \\
		\hline $1.48783512395703226$ & $3.7554672283554990$ &$0.9919117057598183$ & $2.29\times 10^{-4}$ & $46$ & $7.98\times 10^{-5}$ & $26$   \\
		\hline $1.32070036405934420$ & $4.3914554711638349$ &$1.0018891411481198$& $1.18\times 10^{-6}$ & $76$ & $5.76 \times 10^{-7}$ & $41$   \\
		\hline $1.21812525709410644$ & $5.6355288151271085$ &$1.0009014615603334$ &  $7.14\times 10^{-10}$ & $166$ & $5.14\times 10^{-10}$ & $71$   \\
        \hline $1.14878150173321925$ & $7.2956245490719404$ &$1.0000368348358225$& $1.30\times 10^{-15}$ & $271$ & $1.98\times 10^{-14}$ & $116$   \\
\hline
\end{tabular}
}
\label{tabl:parameter}
\end{table}

The error estimates in Theorem~\ref{thm:SOG}
are fairly accurate for a wide range of $b$.
As observed in the parameter selection process, the dominant term in $U_{\text{err}}$ and $\bm{F}_{\text{err}}(\bm{r}_i)$ is $b^{-M}$ and $b^{-3M}$, respectively. To numerically verify this, we perform numerical calculations on a cubic box with side length $1$ ($r_c=0.3$), with $100$ particles randomly distributed within the box. The reference solution is obtained via the Ewald2D summation~\cite{parry1975electrostatic}. Figure~\ref{fig:trunFour} shows close  agreement between the error estimate Eq.~\eqref{eq::convrate} and the actual errors.
	
\begin{figure}[!ht] 
    \centering
    \includegraphics[width=1\textwidth]{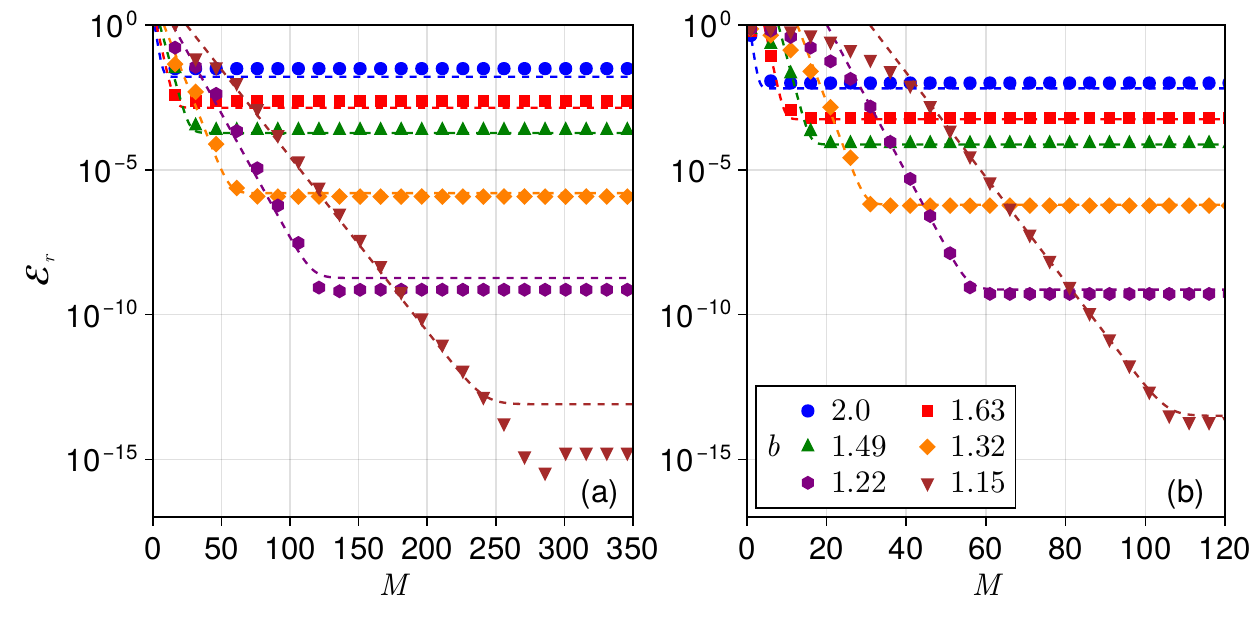}
    \caption{Relative error in (a) energy and (b) force due to the SOG decomposition, plotted against the number of truncated terms, $M$. 
    The data are presented for various $b$, with other associated parameters detailed in Table~\ref{tabl:parameter}. The dotted lines represent theoretical estimates as provided in Theorem~\ref{thm:SOG}.}
    \label{fig:trunFour}
\end{figure}




\begin{remark}
The optimal parameter set depends on the specific application. In MD simulations, prioritizing force accuracy is crucial, whereas MC simulations require energy accuracy.
\end{remark}


\section{A fast spectral sum-of-Gaussians method}
\label{sec::FSSOG}
In this section, we present a simple yet efficient two-level scheme to evaluate the far part
$\Phi^{\mathcal{F}}$ whose interaction kernels consists of a collection of Gaussians.
Since Gaussians are completely separable
in Fourier space, it is tempting to use 
NUFFTs to compute the far part. However,
the Gaussian kernels in the far part have 
a very wide range of variances. As the $z$ direction is free and one needs to discretize the Fourier transform, it is clear that the first Gaussian with the smallest 
variance $s_0$ determines the cutoff value $K_{\text{max}}$ in Fourier space, and the last Gaussian with the largest variance $s_M$ determines the step size $h$ in Fourier space. Thus, treating all these Gaussians together will lead to exceedingly
large number of Fourier modes. This phenomenon is similar to the Fourier spectral analysis of the heat kernel on a large time interval in \cite{greengard2000acha}. However, the fix is rather straightforward. It is well known that Gaussians of very large variances are well approximated by polynomials on a fixed interval. This fact has been used in many 
fast Gauss transforms (see, for example, 
\cite{jiang2024sirev}). Thus, we split these Gaussians into two groups - a mid-range group efficiently approximated by Fourier 
series, and a long-range group efficiently approximated by polynomials along the $z$ 
direction. 
The mid-range group can be handled via a method similar to the NUFFT directly, where the upsampling in standard NUFFTs is not required due to the fast decay of the Gaussian kernels. For the long-range group we apply polynomial interpolation and anterpolation along the $z$ direction, then the Fourier-based method in $x$ and $y$ direction. Once again, the upsampling is unnecessary.



\subsection{Range splitting of the far-field potential}\label{subsec::rangesplitting}
    	
	
    
We first present the error estimate
on the polynomial approximation to Gaussians
of large variances.

\begin{theorem}\label{thm::Che}
Suppose that $s_{\ell}\geq \eta L_z$ with $\eta>0$. Then the Chebyshev interpolant of the Gaussian function $e^{-z^2/s_{\ell}^2}$ on the interval $[-L_z/2,L_z/2]$ satisfies the error estimate
	\begin{equation}\label{eq::gaussianapprox}
		\left|e^{-z^2/s_{\ell}^2}-\sum_{n=0}^{P-1}{}^{\prime}a_nT_{n}\left(\frac{2z}{L_z}\right)\right|\leq \frac{1}{\sqrt{P!}(2\sqrt{2}\eta)^{P}},
	\end{equation}
	where $T_{n}(z)$ represents Chebyshev polynomial of degree $n$ on $[-1,1]$, and the prime indicates that there is a factor of $1/2$ in front of $a_0$.
\end{theorem}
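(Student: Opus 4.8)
The plan is to reduce the statement to a rescaled problem on $[-1,1]$ and then combine the classical Chebyshev interpolation error formula with a uniform bound on Hermite polynomials. First I would substitute $u=2z/L_z\in[-1,1]$, so that the function to be approximated becomes $f(u)=e^{-\alpha u^2}$ with $\alpha=L_z^2/(4s_{\ell}^2)$. The hypothesis $s_{\ell}\geq\eta L_z$ gives the crucial bound $\alpha\leq 1/(4\eta^2)$, equivalently $\sqrt{\alpha}\leq 1/(2\eta)$; this is exactly the quantity that will generate the factor $(2\eta)^{-P}$ in the final estimate. Since $T_n(2z/L_z)=T_n(u)$, the sum in the statement is precisely the degree-$(P-1)$ Chebyshev interpolant of $f$ on $[-1,1]$, so it suffices to bound $\|f-\sum_{n=0}^{P-1}{}'a_nT_n\|_\infty$ on $[-1,1]$.

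Next I would apply the standard error formula for interpolation at the $P$ roots of $T_P$: for each $u$ there is $\xi\in[-1,1]$ with
\[
f(u)-\sum_{n=0}^{P-1}{}'a_nT_n(u)=\frac{f^{(P)}(\xi)}{P!}\,\frac{T_P(u)}{2^{P-1}}.
\]
Because $|T_P|\leq 1$ on $[-1,1]$, this reduces everything to controlling the $P$-th derivative of a Gaussian. Writing $x=\sqrt{\alpha}\,u$ and using the Rodrigues identity $\tfrac{d^P}{dx^P}e^{-x^2}=(-1)^PH_P(x)e^{-x^2}$, I obtain $f^{(P)}(u)=(-1)^P\alpha^{P/2}H_P(\sqrt{\alpha}\,u)e^{-\alpha u^2}$, hence $\max_{[-1,1]}|f^{(P)}|\leq \alpha^{P/2}\sup_{x\in\mathbb{R}}|H_P(x)|e^{-x^2}$.

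The decisive analytic ingredient is a sharp uniform bound on the Hermite functions. Cramér's inequality gives $\sup_x|H_P(x)|e^{-x^2/2}\leq C\,2^{P/2}\sqrt{P!}$, and discarding the extra factor $e^{-x^2/2}\leq 1$ yields $\sup_x|H_P(x)|e^{-x^2}\leq C\,2^{P/2}\sqrt{P!}$. Substituting back, together with $\alpha^{P/2}\leq (2\eta)^{-P}$ and $(\sqrt2)^P(2\eta)^P=(2\sqrt2\,\eta)^P$, gives
\[
\Big\|f-\sum_{n=0}^{P-1}{}'a_nT_n\Big\|_\infty\leq\frac{C\,\alpha^{P/2}\,2^{P/2}\sqrt{P!}}{2^{P-1}P!}=\frac{2C\,\alpha^{P/2}}{(\sqrt2)^P\sqrt{P!}}\leq\frac{2C}{\sqrt{P!}\,(2\sqrt2\,\eta)^P}.
\]
This already reproduces the claimed structure, the $\sqrt{P!}$ and the base $2\sqrt2\,\eta$ appearing exactly as stated.

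I expect the main obstacle to be the prefactor: Cramér's inequality carries $C\approx 1.086$, so the crude argument produces $2C\approx 2.17$ rather than the clean constant $1$. To close this gap one must not throw away the full Gaussian weight. The key observation is that in the error formula the Hermite argument is confined to $|x|=\sqrt{\alpha}\,|\xi|\leq 1/(2\eta)$, a small neighborhood of the origin where $|H_P(x)|e^{-x^2}$ lies well below its global maximum; a refined estimate of $\max_{|x|\leq 1/(2\eta)}|H_P(x)|e^{-x^2}$ absorbs the stray constant. Equivalently, one can bound the tail Chebyshev coefficients directly through $a_{2k}=\pm2\,e^{-\alpha/2}I_k(\alpha/2)$, use the modified-Bessel estimate $e^{-\alpha/2}I_k(\alpha/2)\leq(\alpha/4)^k/k!$ valid in the relevant range of $\alpha$, and convert the resulting $(P/2)!$ into $\sqrt{P!}$ via the central-binomial inequality $P!\leq 2^P((P/2)!)^2$. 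Everything outside of this constant-sharpening step is routine.
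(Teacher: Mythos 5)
Your proposal follows essentially the same route as the paper's proof: rescale to $[-1,1]$, apply the classical Chebyshev interpolation remainder involving $f^{(P)}$, and control the $P$-th derivative of the Gaussian via the Rodrigues formula and a uniform Hermite-polynomial bound. The one substantive difference is which Hermite inequality you invoke, and it is exactly where your argument falls short of the statement as written. The paper cites Indritz's inequality \cite{Jack1961Inequality}, $|H_P(x)|\leq \sqrt{2^P P!}\,e^{x^2/2}$ for all real $x$, whose constant is exactly $1$; combined with $e^{-x^2}\leq e^{-x^2/2}$ this yields $\|f^{(P)}\|_\infty\leq \sqrt{2^P P!}\,s_\ell^{-P}$ in one line, and the stated bound with prefactor $1$ drops out of the interpolation estimate with no constant-sharpening step at all. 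Using Cram\'er's form with $C\approx 1.086$, your rigorous chain only delivers $2C/(\sqrt{P!}(2\sqrt{2}\eta)^P)$, so the theorem as stated is not yet proved.

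Of your two proposed repairs, the first rests on a false premise: you claim that $|x|\leq 1/(2\eta)$ confines the Hermite argument to ``a small neighborhood of the origin where $|H_P(x)|e^{-x^2}$ lies well below its global maximum.'' But $\eta>0$ is arbitrary in the theorem, so $1/(2\eta)$ need not be small, and more importantly, since $(-1)^P H_P(x)e^{-x^2}$ is the $P$-th derivative of $e^{-x^2}$, the global maximum of $|H_P(x)|e^{-x^2}$ is attained \emph{at} the origin for even $P$ (value $P!/(P/2)!$) and adjacent to it for odd $P$; localizing near $0$ therefore gains nothing beyond the true global supremum. Using that supremum together with the central-binomial inequality $P!\leq 2^P((P/2)!)^2$ gives $P!/(P/2)!\leq 2^{P/2}\sqrt{P!}$ --- Indritz's bound again --- which removes Cram\'er's $C$ but still leaves a residual factor $2$ in your final estimate. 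That last factor traces to your remainder constant $1/(2^{P-1}P!)$ versus the $1/(2^P P!)$ form of the interpolation estimate the paper quotes from \cite{Trefethen2019App}; your Bessel route through $a_{2k}=\pm 2e^{-\alpha/2}I_k(\alpha/2)$ is sound in substance and reproduces the base $(2\sqrt{2}\eta)^{-P}$ and the $\sqrt{P!}$, but the tail summation and the aliasing factor in passing from the truncated series to the interpolant again leave an $O(1)$ constant slightly above $1$. In short: replace Cram\'er by Indritz and adopt the paper's quoted interpolation constant, and your proof becomes the paper's proof verbatim.
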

\begin{proof}
	Approximating a smooth function $f(x)$ on the interval $[a,b]$ by the Chebyshev interpolation, has the standard error estimate~\cite{Trefethen2019App}
	\begin{equation}\label{eq::Taylor3}
	\left|f(x)-\sum_{n=0}^{P-1}{}^{\prime}a_nT_{n}\left(\frac{2x-(a+b)}{b-a}\right)\right| \leq \frac{1}{2^{P} P !}\left(\frac{b-a}{2}\right)^{P}\left\|f^{(P)}\right\|_\infty.
	\end{equation}
    Note that the Gaussian function $f(x)=e^{-x^2/s_{\ell}^2}$ satisfies \cite{Jack1961Inequality}
    \begin{equation}\label{eq::2.38}
    	\|f^{(P)}\|_\infty\leq \sqrt{2^P P!}s_\ell^{-P}. 
    \end{equation}
    Substituting $a=-L_z/2$, $b=L_z/2$ and Eq.~\eqref{eq::2.38} into Eq.~\eqref{eq::Taylor3}, we obtain Eq.~\eqref{eq::gaussianapprox}.
\end{proof}
Theorem~\ref{thm::Che} demonstrates the rapid convergence of Chebyshev expansion for Gaussians with $s_{\ell}\sim O(L_z)$. For instance, when $\eta=1$, $P=6$, $10$, and $17$ yields approximately $4$, $8$, and $15$ decimal digits of accuracy, respectively. For $\eta=1/2$, the values of $P$ should be increased to $8$, $14$, and $23$ to attain the same level of accuracy. The aforementioned observations naturally suggest the idea of dividing $\mathcal{F}_{b}^{\sigma}(r)$, the far part Gaussians in u-series decomposition, into two categories, which we refer to as ``Range Splitting''. This can be precisely described through the following definition. 

\begin{definition}
(Range Splitting) Let $\eta=O(1)$ be a positive parameter and let the Gaussian bandwidths satisfy $s_{0}<s_{1}<\cdots<s_{M}$. A Gaussian function $e^{-r^2/s_{\ell}^2}$ is categorized as ``long-range'' if $s_{\ell}>\eta L_z$, and as ``mid-range'' otherwise. $\eta$ is referred to as the factor of range splitting. 
\end{definition}

Using the range splitting, the Gaussians in the far field kernel of u-series decomposition, $\mathcal{F}_{b}^{\sigma}(r)$, are further divided into mid- and long-range based on their indices. The mid-range part consists of Gaussians with indices satisfying $0\leq\ell\leq m$, where $s_{m}\leq \eta L_z<s_{m+1}$. The long-range part consists of the remaining $M-m$ Gaussians with large variance, i.e., those with indices satisfying $\ell> m$. Consequently, the Coulomb kernel $1/r$ is decomposed as in Eq.~\eqref{eq::SOGdecomp}, where we recall the near-field kernel is simply the $1/r$ minus $\mathcal{F}_{b}^{\sigma}(r)$. The screening function can be split accordingly as $\tau_{\text{SOG}}=\tau_{\text{SOG}}^{\rm mid}+\tau_{\text{SOG}}^{\rm long}$, and the far-field potential can be expressed as  
\begin{equation}\label{eq::1.32}
\Phi^{\mathcal{F}}=\Phi^{\rm mid}_{\text{SOG}}+\Phi^{\rm long}_{\text{SOG}},
\end{equation}
corresponding to the solution obtained by substituting the Fourier transform of $\tau_{\text{SOG}}^{\rm mid}$ and $\tau_{\text{SOG}}^{\rm long}$ into Eq.~\eqref{eq::phindf}, respectively. In Eq.~\eqref{eq::1.32}, $\Phi^{\rm mid}_{\text{SOG}}$ can be accurately periodicized in the $z$-direction and effectively handled using a pure Fourier spectral solver. In contrast, the long-range component $\Phi^{\rm long}_{\text{SOG}}$ requires a large zero-padding factor if the same Fourier solver is applied. However, it is extremely smooth over $[-L_z/2,L_z/2]$, allowing for fast computation using a novel Fourier-Chebyshev spectral solver. These two solvers are described in Sections~\ref{sec::SSOG} and \ref{sec::Chebyshev}, respectively.

\begin{remark}
If $\eta L_z=o(1)$, such that $s_{\ell}>\eta L_z$ for all $\ell=0,\cdots,M$, all the Gaussians are categorized as long-range in the range splitting. Consequently, the entire far-field potential $\Phi^{\mathcal{F}}$ is handled by a single Fourier-Chebyshev spectral solver, and the mid-range solver is not invoked. 
\end{remark}


\subsection{A Fourier spectral solver for the mid-range part}
	\label{sec::SSOG}
We employ a Fourier spectral solver to compute the mid-range component of potential, $\Phi^{\rm mid}_{\text{SOG}}$. This solver is formulated closely after the recent SE method~\cite{shamshirgar2021fast}. However, there are two notable differences: we replace the Ewald decomposition with the SOG decomposition described in Section~\ref{subsec::SOGdecomp}, and we do not need TKM to remove the singularity of the kernel at the origin.  
	
Let us start by introducing a window function $W(\bm{r})$ along with its Fourier transform $\widehat{W}(\bm{k})$ for smearing particles onto a uniform grid and collecting field values from these grids.  By inserting the identity $1\equiv \widehat{W}(\bm{k})\left|\widehat{W}(\bm{k})\right|^{-2}\widehat{W}(\bm{k})$ into the expression of $\Phi^{\rm mid}_{\text{SOG}}$, we obtain
\begin{equation}
\label{eq::PhiNDF}
\Phi^{\rm mid}_{\text{SOG}}(\bm{r}_i)=4\pi\mathcal{L}\left[\widehat{W}(\bm{k})e^{\mathrm{i}\bm{k}\cdot\bm{r}_i}\dfrac{\widehat{\tau}_{\text{SOG}}^{\rm mid}(\bm{k})}{|\bm{k}|^2}\left|\widehat{W}(\bm{k})\right|^{-2}\sum_{j=1}^{N}q_j\widehat{W}(\bm{k})e^{-\mathrm{i}\bm{k}\cdot\bm{r}_{j}}\right].
\end{equation}
We will use the last $\widehat{W}(\bm{k})$ for the gridding step, the first $\widehat{W}(\bm{k})$ for the gathering step, and $\left|\widehat{W}(\bm{k})\right|^{-2}$ for the scaling step, respectively. The details are provided as follows.
	
	Let us define
	\begin{equation}
        \label{eq::grid}
		\widehat{S}_{\text{grid}}(\bm{k}):=\sum_{j=1}^{N}q_j\widehat{W}(\bm{k})e^{-\mathrm{i}\bm{k}\cdot\bm{r}_{j}},
	\end{equation}
	which is the Fourier transform of
	\begin{equation}\label{eq::gridding}
	S_{\text{grid}}(\bm{r})=\sum_{j=1}^{N}q_jW(\bm{r}-\bm{r}_j)_{*}
	\end{equation}
	by the convolution theorem (see Appendix~\ref{subsec::conv}). Here, $(\cdot)_*$ denotes that periodicity is implied in the periodic directions. The evaluation of Eq.~\eqref{eq::gridding} on a uniform grid will become the gridding step in this solver, where the grid discretization will be discussed in Section~\ref{subsec::grid}. After this step, we obtain $\widehat{S}_{\text{grid}}(\bm{k})$ from $S_{\text{grid}}(\bm{r})$ via a forward 3D FFT. Next, we compute 
	\begin{equation}\label{eq::widehat-tilde}
		\widehat{S}_{\text{scal}}(\bm{k}):=\dfrac{\widehat{\tau}_{\text{SOG}}^{\rm mid}(\bm{k})}{k^2}\left|\widehat{W}(\bm{k})\right|^{-2}\widehat{S}_{\text{grid}}(\bm{k}),
	\end{equation}
	which will become the scaling step. By Eq.~\eqref{eq::widehat-tilde}, we can reformulate Eq.~\eqref{eq::PhiNDF} as 
	\begin{equation}\label{eq::eq33}
		\Phi^{\rm mid}_{\text{SOG}}(\bm{r}_i)=4\pi\mathcal{L}\left[\widehat{S}_{\text{scal}}(\bm{k})\widehat{W}(\bm{k})e^{\mathrm{i}\bm{k}\cdot\bm{r}_i}\right].
	\end{equation}
Applying the Plancherel's theorem and the convolution theorem (see Appendices~\ref{subsec::conv} and \ref{subsec::Plancherel}) to Eq.~\eqref{eq::eq33}, we obtain an equivalent integral form
\begin{equation}\label{eq::Phiconv}
	\Phi^{\rm mid}_{\text{SOG}}(\bm{r}_i)=4\pi\int_{\mathbb{R}}\int_{\mathcal{R}^2}S_{\text{scal}}([\dot{\bm{r}}_j,z_j])W(\dot{\bm{r}}_i-\dot{\bm{r}}_j)_{*}W(z_i-z_j)d\dot{\bm{r}}_jdz_j.
\end{equation}
We apply a backward 3D FFT to obtain $S_{\text{scal}}(\bm{r})$ from $\widehat{S}_{\text{scal}}(\bm{k})$ at each grid point. Once Eq.~\eqref{eq::Phiconv} is discretized on the grid by the trapezoidal rule, it will become the gathering step. Note that if the window function $W$ is smooth and has compact support, this step has spectral convergence with the number of grids. The procedure of this Fourier spectral solver is summarized in Algorithm~\ref{al::mid}. 
	
In the aforementioned procedure, we employ a pair of forward and backward 3D FFTs. These transforms are hybrid: a discrete transform is applied in each periodic direction, and an approximation to the continuous Fourier integral is performed in the free direction. The latter is equivalent to periodicizing the potential. Since the narrowest Gaussian necessitates the finest grid, we adopt a uniform grid size akin to that employed for the $\ell=0$ Gaussian across all mid-range Gaussians. This allows the precomputation of $\widehat{\tau}^{\rm mid}_{\text{SOG}}(\bm{k})$ so that the complexity is independent of the number of mid-range Gaussians $m$.



\begin{remark}
\label{rmk::PWindow}
Commonly used  window functions include the Gaussian window, the Kaiser-Bessel (KB) window~\cite{kaiser1980use}, and the ``exponential of semicircle'' (ES) window~\cite{barnett2019parallel}. The choice of window function influences both the accuracy and performance of the proposed solver. In our code, we designed a module that allows for adaptive constructions of polynomial approximations for these window functions using Chebyshev interpolation. 
\end{remark}
\begin{remark}
Compared to the particle-mesh Ewald (PME) method~\cite{Darden1993JCP}, our Fourier spectral solver inherits a key advantage from previous SE methods: the support of window functions can be varied independently of the grid size. This allows for separate control of discretization errors and window truncation errors. Additionally, unlike standard NUFFT-based solvers~\cite{nestler2015fast}, our method does not require any upsampling. These arguments will be proved in Section~\ref{subsec::analysis3DFF}.
\end{remark}

\begin{algorithm}[H]
\caption{~Fourier spectral solver for the mid-range potential $\Phi^{\rm mid}_{\text{SOG}}$}\label{al::mid}
\begin{algorithmic}[1]
\State (Gridding) Smear discrete particles onto uniform grids using Eq.~\eqref{eq::gridding}.
\State (Real to Fourier) Obtain $\widehat{S}_{\text{grid}}(\bm{k})$ from $S_{\text{grid}}(\bm{r})$ using a forward 3D FFT.
\State (Scaling) For each Fourier mode $\bm{k}$, obtain $\widehat{S}_{\text{scal}}(\bm{k})$ by a scaling as in Eq.~\eqref{eq::widehat-tilde}.
\State (Fourier to Real) Obtain $S_{\text{scal}}(\bm{r})$  from $\widehat{S}_{\text{scal}}(\bm{k})$ by a backward 3D FFT.
\State (Gathering) Compute $\Phi^{\rm mid}_{\text{SOG}}(\bm{r}_i)$ by gathering contributions from grids using Eq.~\eqref{eq::Phiconv}.
\end{algorithmic}
\end{algorithm}	

\subsection{Fourier-Chebyshev spectral solver for long-range part}
	\label{sec::Chebyshev}
In this section, we introduce an SOG-based Fourier-Chebyshev spectral solver specifically designed for handling the long-range component of potential, $\Phi^{\rm long}_{\text{SOG}}$. For clarity, we introduce some basic notations similar to those in Definition~\ref{def::2}, but using the Chebyshev expansion~(see Appendix~\ref{subsec::ChebyshevExpansion}) in the free direction.
\begin{definition}
(Fourier-Chebyshev transform) Let $f([\dot{\bm{r}},z])$ be a function that is periodic in $\dot{\bm{r}}$ and non-periodic in $z\in[-L_z/2,L_z/2]$. Its forward Fourier-Chebyshev transform is defined as:
\begin{equation}\label{eq::FCtransform}
\widetilde{f}([\dot{\bm{k}},n]):=\int_{\mathcal{R}^2}\int_{-\frac{L_z}{2}}^{\frac{L_z}{2}}\frac{f([\dot{\bm{r}},z])e^{-i\dot{\bm{k}}\cdot\dot{\bm{r}}}}{\sqrt{1-(2z/L_z)^2}}T_{n}\left(\frac{2z}{L_z}\right)dzd\dot{\bm{r}},
\end{equation}
where $T_n(\cdot)$ is the Chebyshev polynomial of degree $n$ and $\widetilde{f}([\dot{\bm{k}},n])$ is referred to as the Fourier-Chebyshev coefficient. The backward Fourier-Chebyshev transform is defined as:
\begin{equation}\label{eq::InvFCtransform}
f([\dot{\bm{r}},z])=\frac{2\pi}{L_xL_yL_z}\sum_{\dot{\bm{k}}\in\mathcal{K}^2}\sum_{n=0}^{\infty}{}^{\prime}\widetilde{f}([\dot{\bm{k}},n])e^{i\dot{\bm{k}}\cdot\dot{\bm{r}}}T_{n}\left(\frac{2z}{L_z}\right).
\end{equation}
\end{definition}
\noindent Once Eqs.~\eqref{eq::FCtransform} and \eqref{eq::InvFCtransform} are discretized and truncated, the calculations along periodic dimensions can be accelerated via the FFT. 


Consider the closed form of $\Phi^{\rm long}_{\text{SOG}}$, which is similar to Eq.~\eqref{eq::phindf} except that only those Gaussians with $\ell> m$ are included and the integral along the $z$-direction is explicitly carried out. By introducing a window function $W(\dot{\bm{r}})$ with its Fourier transform $\widehat{W}(\dot{\bm{k}})$ and inserting the identity $1\equiv \widehat{W}(\dot{\bm{k}})|\widehat{W}(\dot{\bm{k}})|^{-2}\widehat{W}(\dot{\bm{k}})$, we obtain
\begin{equation}\label{eq::phi2M}
\Phi_{\text{SOG}}^{\rm long}([\dot{\bm{r}}_i,z_i])=\frac{\pi}{L_xL_y}\sum_{\dot{\bm{k}}\in\mathcal{K}^2}W(\dot{\bm{k}})e^{i\dot{\bm{k}}\cdot\dot{\bm{r}}_i}\left|W(\dot{\bm{k}})\right|^{-2}\sum_{\ell=m+1}^{M}w_{\ell}s_{\ell}^2e^{-s_{\ell}^2\dot{k}^2/4}\widehat{S}^{\ell}_{\text{grid}}([\dot{\bm{k}},z_i]),
\end{equation}
where 
\begin{equation}
\widehat{S}^{\ell}_{\text{grid}}([\dot{\bm{k}},z]):=\sum_{j=1}^{N}q_jW(\dot{\bm{k}})e^{-i\dot{\bm{k}}\cdot\dot{\bm{r}}_j}e^{-(z-z_j)^2/s_{\ell}^2}
\end{equation}
is the 2D Fourier transform of $S^{\ell}_{\text{grid}}$. Due to Theorem~\ref{thm::Che}, a key observation is that $\widehat{S}^{\ell}_{\text{grid}}([\dot{\bm{k}},z])$ can be well approximated by Chebyshev series on $z\in[-L_z/2,L_z/2]$:
\begin{equation}
\widehat{S}^{\ell}_{\text{grid}}([\dot{\bm{k}},z])=\sum_{n=0}^{\infty}{}^{\prime}\widetilde{S}^{\ell}_{\text{grid}}([\dot{\bm{k}},n])T_n\left(\frac{2z}{L_z}\right).
\end{equation}
Our solver centers on a fundamental idea for efficient gridding/gathering: restricting the use of window functions $W(\dot{\bm{r}})$ to periodic dimensions, and applying Chebyshev interpolation and evaluation (see Appendix~\ref{subsec::ChebyshevInterpolation}) in the free dimension. The details are provided as follows.

To begin with, we sample 
\begin{equation}\label{eq::Chebygrid}
S^{\ell}_{\text{grid}}([\dot{\bm{r}},z])=\sum_{j=1}^{N}q_jW(\dot{\bm{r}}-\dot{\bm{r}}_j)_{*}e^{-(z-z_j)^2/s_{\ell}^2}
\end{equation}
on tensor product proxy points $\mathscr{S}_{\text{R-C}}\subset\left\{(\dot{\bm{r}},z)\in\mathscr{R}^2\times \mathscr{C}\right\}$, where $\mathscr{R}^2$ and $\mathscr{C}$ denote the set of uniform nodes on periodic domain and Chebyshev nodes scaled on $[-L_z/2,L_z/2]$, respectively. This will become the gridding step. After this step, we obtain $\widetilde{S}^{\ell}_{\text{grid}}$ from $S^{\ell}_{\text{grid}}$ by a forward Fourier-Chebyshev transform as in Eq.~\eqref{eq::FCtransform}. During this calculation, the forward 2D FFT and Chebyshev interpolation will be used to get Fourier-Chebyshev coefficients from proxy points. Next, we directly evaluate
\begin{equation}\label{eq::overlineSscal}
\widetilde{S}_{\text{scal}}([\dot{\bm{k}},n]):=\left|\widehat{W}(\dot{\bm{k}})\right|^{-2}\sum_{\ell=m+1}^{M}w_{\ell}s_{\ell}^2e^{-s_{\ell}^2\dot{k}^2/4}\widetilde{S}_{\text{grid}}^{\ell}([\dot{\bm{k}},n])
\end{equation}
which will become the scaling step. The procedures outlined above, up to Eq.~\eqref{eq::overlineSscal}, can be further accelerated via techniques described in Appendix~\ref{subsec::Gridding} so that the complexity is independent of both $m$ and $M$. Finally, using Eq.~\eqref{eq::overlineSscal}, we rewrite Eq.~\eqref{eq::phi2M} in the form of backward Fourier-Chebyshev transform:
\begin{equation}
\Phi_{\text{SOG}}^{\rm long}([\dot{\bm{r}}_i,z_i])= \frac{2\pi}{V}\sum_{\dot{\bm{k}}\in\mathcal{K}^2}\sum_{n=0}^{\infty}{}^{\prime}\widetilde{S}_{\text{scal}}([\dot{\bm{k}},n])\widehat{W}(\dot{\bm{k}})e^{i\dot{\bm{k}}\cdot\dot{\bm{r}}_{i}}T_n\left(\frac{2z_i}{L_z}\right),
\end{equation}
where $V=L_xL_yL_z$ represents the volume of $\Omega$. By applying Plancherel's theorem and the convolution theorem in periodic dimensions, we arrive at an equivalent representation
\begin{equation}\label{eq::SOGLong}
\Phi_{\text{SOG}}^{\rm long}([\dot{\bm{r}}_i,z_i])=\frac{2\pi}{L_z}\sum_{n=0}^{\infty}{}^{\prime}\int_{\mathcal{R}^2}S_{\text{scal}}([\dot{\bm{r}},n])W(\dot{\bm{r}}_i-\dot{\bm{r}})_{*}d\dot{\bm{r}}T_n\left(\frac{2z_{i}}{L_z}\right).
\end{equation}
Once Eq.~\eqref{eq::SOGLong} is discretized using the trapezoidal rule, it becomes the gathering step. Here, we conduct backward 2D FFTs to obtain $S_{\text{scal}}$ from $\widehat{S}_{\text{scal}}$, followed by using window interpolation and Chebyshev evaluation in periodic and non-periodic dimensions, respectively, to recover the potential from proxy points to real charges. The process of this Fourier-Chebyshev spectral solver is outlined in Algorithm~\ref{al::long}.

In the described solver, the processes of obtaining $\widetilde{S}^{\ell}_{\text{grid}}$ and $S_{\text{scal}}$ are accelerated using FFTs. However, as discussed in Section~\ref{subsec::complexity}, this solver does not rely on the FFT to achieve its asymptotic complexity when $L_z$ is not significantly smaller than $L_x$ or $L_y$. In such scenarios, the solver can be significantly simplified, and the resulting version is outlined in Appendix~\ref{appendix::simplified}. Note that in Step $4$ of Algorithm~\ref{al::long}, we only require a ``Fourier to Real'' rather than a full ``Fourier-Chebyshev to Real'' procedure. This is because the
Chebyshev coefficients are already obtained through Steps $2-3$, allowing the Chebyshev expansion along $z$-direction to be evaluated directly, without the need of computation on the Chebyshev proxy points.

\begin{remark}
Maxian \emph{et al.}~\emph{\cite{maxian2021fast}} previously investigated Ewald-based Fourier-Chebyshev solvers. In comparison, the solver introduced in this paper offers several advantages: it eliminates the need to solve algebraic equations for individual Fourier modes~\emph{\cite{greengard1991spectral}}, and reduces grid sampling costs through the utilization of window functions.
\end{remark}

\begin{remark}
The calculation of Chebyshev coefficients can also benefit from acceleration using the FFT. The main idea is to transform the Chebyshev nodes into uniform nodes along the argument direction, a technique commonly known as the fast Fourier-Chebyshev transform (FFCT)~\emph{\cite{Trefethen2000}}.
\end{remark}

\begin{algorithm}[H]
	\caption{Fast spectral solver for the long-range potential $\Phi_{\text{SOG}}^{\rm long}$ (with FFT)}\label{al::long}
	\begin{algorithmic}[1]
	\State (Gridding) Sample $S^{\ell}_{\text{grid}}([\dot{\bm{r}},z])$ on tensor product proxy points for each $\ell$.
	\State (Real to Fourier-Chebyshev) Obtain $\widetilde{S}^{\ell}_{\text{grid}}([\dot{\bm{k}},n])$ from $S^{\ell}_{\text{grid}}([\dot{\bm{r}},z])$ by applying forward 2D FFTs and Chebyshev interpolation in periodic and free dimensions, respectively.
 \State (Scaling) For each Fourier mode $\dot{\bm{k}}$ and Chebyshev order $n$, evaluate $\widetilde{S}_{\text{scal}}([\dot{\bm{k}},n])$ by using Eq.~\eqref{eq::overlineSscal}.
\State (Fourier to Real) Obtain $S_{\text{scal}}([\dot{\bm{r}},n])$ from $\widetilde{S}_{\text{scal}}([\dot{\bm{k}},n])$ via backward 2D FFTs.
\State (Gathering) Recover $\Phi^{\rm long}_{\text{SOG}}([\dot{\bm{r}}_i,z_i])$ by gathering contributions from $S_{\text{scal}}([\dot{\bm{r}},n])$.
\end{algorithmic}
\end{algorithm}

\subsection{Grid discretization}\label{subsec::grid}
In the gridding step of  Algorithm~\ref{al::mid}, point sources must be distributed onto a uniform grid so that the FFT is applicable. We establish a uniform Cartesian grid within the box $\Omega$, divided into $I_{d}$ subintervals along each direction $d\in\{x,y,z\}$, with a mesh size defined as $h_{d}=L_d/I_d$. The \emph{half support} of the window function spans a length $H_d=(\mathcal{P}_d-1) h_d/2$ in each direction, where $\mathcal{P}_d$ represents an odd number of grid points within the window's support in each direction.  

Similar to previous SE methods~\cite{lindbo2012fast,shamshirgar2021fast}, the box $\Omega$ needs to be extended to a length
\begin{equation}
L_z^*=L_z+\delta_{z} ,\quad\text{with}\quad \delta_{z}\geq 2H_z,
\end{equation}
in the free direction to accommodate the support of the window function, particularly when some particles are situated near $\pm L_z/2$. As observed in~\cite{shamshirgar2021fast}, selecting $\delta_{z}=2H_z$ suffices for the Gaussian window. However, for the KB window, this factor should be increased to $\delta_{z}=2.6H_z$ due to the slow decay of the screening function towards the box boundaries.

Without loss of generality, we assume $L_z^*=\lambda_z (L_z+\delta_{z})$, where $\lambda_z\geq 1$ represents the ``zero-padding'' factor, consistent with \cite{shamshirgar2021fast}. The uniform grid must be also be extended to size $I^*_z=\lceil\lambda_z (I_z+\delta_{I_z})\rceil$, where $\delta_{I_z}=\delta_z/h_z$ is an integer. To ensure that $I^*_z$ becomes an even integer so that the FFT is beneficial, first set $I^*_z=2\lceil \lambda_z(I_z+\delta_{I_z})/2\rceil$ and then set $L_z^*=h_zI_z^*$. This defines the extended box \begin{equation}
\Omega^*:=\mathcal{R}^2\times \left[-\frac{L_z}{2}-\frac{\delta_z^*}{2},~\frac{L_z}{2}+\frac{\delta_z^*}{2}\right]
\end{equation}
where $\delta_z^*=L_z^*-L_z$. The optimal selection of $\lambda_z=L_z^*/(L_z+\delta_z)$ will be established through analysis and experiments in Sections~\ref{sec::erroranalysis} and \ref{sec::numer}, respectively.

Furthermore, spatial discretization along periodic directions is also necessary in Algorithm~\ref{al::long}. Similarly, we can introduce a uniform Cartesian grid on $\mathcal{R}^2$ with $I_x^{\rm long}$ and $I_y^{\rm long}$ subintervals in each direction, defining the mesh size as $h_x^{\rm long}=L_x/I_x^{\rm long}$ and $h_y^{\rm long}=L_y/I_{y}^{\rm long}$, respectively. 

\begin{remark}\label{rmk::meshsize}
It has been demonstrated that the mesh size required for a Gaussian $e^{-r^2/s_{\ell}^2}$ is on the order of $O(s_{\ell})$~\emph{\cite{shan2005gaussian}}. This indicates that we should set $h_x$, $h_y$, and $h_z$ as $O(s_{0})$ and set $h_x^{\rm long}$ and $h_y^{\rm long}$ as $O(s_{m+1})$. Since $s_{m+1}=b^{m+1}s_{0}$, the grid number required by the solver of long-range Gaussians is much smaller than that of mid-range Gaussians.    
\end{remark}

\subsection{Complexity analysis}\label{subsec::complexity}
Let us analyze the complexity of the proposed solver, which consists of four distinct components:
\begin{equation}\label{eq::splitting}
\Phi(\bm{r}_i)\approx \Phi_{\text{SOG}}(\bm{r}_i)=\Phi^{\mathcal{N}}(\bm{r}_i)+\Phi^{\rm mid}_{\text{SOG}}(\bm{r}_i)+\Phi^{\rm long}_{\text{SOG}}(\bm{r}_i)+\Phi^{\text{self}}_i.
\end{equation}
The computation of near-field term $\Phi^{\mathcal{N}}$ is done by combining a cell list with advanced techniques outlined in Remark~\ref{rmk::sum}, so that the cost is independent of the number of Gaussians $M$. The complexity of this part is $O(4\pi r_c^3\rho_r N)$, where $\rho_r=N/V$ denotes the density. 
 Recalling Eq.~\eqref{eq::phiNDR}, the self term $\Phi_{i}^{\text{self}}$ can be computed at a cost of $O(N)$. $\Phi^{\rm mid}_{\text{SOG}}$ and $\Phi^{\rm long}_{\text{SOG}}$ are generated via the mid- and long-range solvers, respectively, where the analysis of these two terms is presented below. 

As mentioned in Remark~\ref{rmk::meshsize}, the total number of Fourier modes required in the 3D Fourier solver for evaluating $\Phi^{\rm mid}_{\text{SOG}}$ is $O(s_{0}^{-3}V^*)$ per particle, where $V^*=L_xL_yL_z^*$ denotes the volume of $\Omega^*$, and $s_0\sim r_c$ as discussed around Eq.~\eqref{eq::2.35z}. Let $\rho_r^*=N/V^*$ represent the number density of the padding system. Since the factor in the scaling step (Eq.~\eqref{eq::widehat-tilde}) can be precomputed and stored, the cost of this solver is $O(\mathcal{P}_x\mathcal{P}_y\mathcal{P}_zN+(r_c^3\rho_r^*)^{-1}N\log N)$,
where the first term is the cost on gridding and gathering, and the second term is the cost on the FFT.

The cost required in the Fourier-Chebyshev solver for evaluating $\Phi^{\rm long}_{\text{SOG}}$ depends on whether the FFT is utilized. If we use the non-FFT version, Algorithm~\ref{al::long_nonfft} 
in Appendix~\ref{appendix::simplified}, for calculation, the cost is $O(s_{ m+1}^{-2}L_xL_yP)$ per particle, and $s_{m+1}\sim \eta L_z$ by the range splitting. In this case, the number of Fourier modes is $O(1)$, and direct calculation in Algorithm~\ref{al::long_nonfft} does not require any gridding or gathering, resulting in significant speedup compared to the FFT-accelerated version Algorithm~\ref{al::long}. The overall complexity is
\begin{equation}\label{eq::comp}
C_{\text{comp}}=O\left(4\pi r_c^3\rho_rN+\mathcal{P}_x\mathcal{P}_y\mathcal{P}_zN+\frac{\lambda_z(1+\delta_z/L_z)}{r_c^3\rho_r}N\log N+\frac{PL_xL_y}{\eta^2 L_z^2}N\right).
\end{equation}
Clearly, if we choose $r_c\sim O(1)$ and assume $L_z=O(\sqrt{L_xL_y})$, the spectral SOG solver has an $O(N\log N)$ complexity. Typical values of these parameters in Eq.~\eqref{eq::comp} (and Eq.~\eqref{eq::comp3} below) under different tolerance levels are provided in Table~\ref{tabl:CubicTime2}.


In general, we can always use the FFT-accelerated version Algorithm~\ref{al::long} for the long-range part to achieve the $O(N\log N)$ complexity. And the computational complexity becomes
\begin{equation}\label{eq::comp3}
\begin{aligned}
C_{\text{comp}}&=O\left(4\pi r_c^3\rho_rN+\mathcal{P}_x\mathcal{P}_y\mathcal{P}_zN+\frac{\lambda_z(1+\delta_z/L_z)}{r_c^3\rho_r}N\log N\right)\\ &+O\left(P\mathcal{P}_x\mathcal{P}_yN+\frac{PL_xL_y}{s_{m+1}^2}N\log N\right).
\end{aligned}
\end{equation}
This is especially needed when the confinement is strong, i.e. $L_z=o(1)\ll \sqrt{L_xL_y}$, where the last term in Eq.~\eqref{eq::comp} is $O(L_xL_yN)\sim O(N^2)$. 
To summarize, the complexity of the spectral SOG solver is $O(N\log N)$, which depends on neither the number of Gaussians $M$ nor the aspect ratio of the computational rectangular box.

\section{Error analysis and parameter selection}\label{sec::erroranalysis}
In this section, we study the pointwise error of the proposed spectral SOG solver, expressed as
\begin{equation}
\mathcal{E}(\bm{r}_i):=\Phi(\bm{r}_i) - \Phi_{\text{SOG}}(\bm{r}_i),
\end{equation}
where $\Phi_{\text{SOG}}$ given in Eq.~\eqref{eq::splitting} is the solution generated by the solver.
The errors in the spectral SOG solver arise from various sources: decomposition error $\mathcal{E}_{\text{deco}}$ due to the use of the SOG decomposition, and the approximation errors $\mathcal{E}_{\rm mid}$ and $\mathcal{E}_{\rm long}$ incurred during the computation of $\Phi^{\rm mid}_{\text{SOG}}$ and $\Phi^{\rm long}_{\text{SOG}}$, respectively.  It is worth noting that the truncation error $\mathcal{E}_{\text{deco}}$ has been analyzed in Section~\ref{subsec::truncateerror}. The remaining two components, $\mathcal{E}_{\rm mid}$ and $\mathcal{E}_{\rm long}$, will be estimated independently in Sections~\ref{subsec::analysis3DFF} and \ref{subsec::analysFFCT}. Furthermore, we provide a scheme for parameter selection in Section~\ref{subsec::parasele}.

\subsection{Approximation errors of the mid-range components}\label{subsec::analysis3DFF}
We begin by analyzing the error $\mathcal{E}_{\rm mid}$ originating from the computation of $\Phi^{\rm mid}_{\text{SOG}}$. Within the framework of Algorithm~\ref{al::mid}, $\mathcal{E}_{\rm mid}$ can be decomposed into four distinct components:
\begin{equation}
\mathcal{E}_{\rm mid}=  \mathcal{E}_{\rm mid}^{\text{trun}}+\mathcal{E}_{\rm mid}^{\text{scal}}+\mathcal{E}_{\rm mid}^{\text{padd}}+\mathcal{E}_{\rm mid}^{\text{gath}},
\end{equation}
where 
\begin{itemize}
    \item $\mathcal{E}_{\rm mid}^{\text{trun}}$ arises from spectrum truncation of the mixing summation functional in Eq.~\eqref{eq::PhiNDF}, enabling the use of a finite Fourier sum.
    \item $\mathcal{E}_{\rm mid}^{\text{scal}}$ stems from the procedure described in Eqs.~\eqref{eq::grid}-\eqref{eq::eq33} for approximating $\widehat{S}_{\text{scal}}$, which includes the gridding error in Algorithm~\ref{al::mid} and the discretization error of Fourier integrals. 
    \item $\mathcal{E}_{\rm mid}^{\text{padd}}$ relates to the truncation of the infinite integral in Eq.~\eqref{eq::Phiconv} within $[-L_z^*/2,L_z^*/2]$, influenced by the zero-padding.
    \item $\mathcal{E}_{\rm mid}^{\text{gath}}$ is associated with the gathering step in the evaluation of Eq.~\eqref{eq::Phiconv}, including both the discretization of the integral via the trapezoidal rule and the truncation of the support of window function.
\end{itemize}
These components will be analyzed individually.

\begin{remark}
The analysis of $\mathcal{E}_{\rm mid}^{\text{scal}}$ is critical in showing that unlike standard NUFFT algorithms~\cite{nufft1,nufft2,nufft3}, Algorithm~\ref{al::mid} does not require any upsampling due to the decay of the Gaussians
in the SOG approximation. Standard NUFFTs require a factor of two upsampling in each dimension~\cite{nufft3}. Thus, Algorithm~\ref{al::mid} gives us a factor of eight in the FFT step in both storage and work.
\end{remark}

Assuming truncation of the Fourier summation functional in Eq.~\eqref{eq::PhiNDF} up to $|\bm{k}|\leq K_{\max}^{\text{mid}}$, the Fourier truncation error $\mathcal{E}_{\rm mid}^{\text{trun}}$ can be estimated by Theorem~\ref{thm::trunFour} under the assumption of the Debye-H$\ddot{\text{u}}$ckel (DH) theory (see \emph{\cite{hansen2013theory}} and also Appendix~\ref{app::DH}), where the proof closely follows that for the classical Ewald summation~\cite{kolafa1992cutoff}. 

\begin{theorem}\label{thm::trunFour}
Under the DH assumption, the following estimate holds
\begin{equation}
\label{eq::M1trun}
\|\mathcal{E}_{\rm mid}^{\emph{trun}}\|_\infty = O(K_{\max}^{\mathrm{mid}}e^{-s_{0}^2(K_{\max}^{\rm{mid}})^2}). 
\end{equation}
To achieve a proscribed tolerance $\varepsilon$, we have $K_{\max}^{\mathrm{mid}} = O(\sqrt{\log(1/\varepsilon)}/s_0)$.
\end{theorem}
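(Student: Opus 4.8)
The plan is to adapt the classical reciprocal-space truncation analysis of Kolafa and Perram~\cite{kolafa1992cutoff} to the SOG kernel and the quasi-2D geometry. First I would simplify the summand of Eq.~\eqref{eq::PhiNDF}. For a real symmetric window the three window factors collapse, $\widehat{W}(\bm{k})^2|\widehat{W}(\bm{k})|^{-2}=1$, so the exact mid-range potential reduces to $\Phi^{\rm mid}_{\text{SOG}}(\bm{r}_i)=4\pi\,\mathcal{L}[\widehat{G}(k)\sum_{j}q_j e^{\mathrm{i}\bm{k}\cdot\bm{r}_{ij}}]$, with effective kernel
\begin{equation}
\widehat{G}(k):=\frac{\widehat{\tau}^{\rm mid}_{\text{SOG}}(\bm{k})}{k^2}=\frac{\sqrt{\pi}}{4}\sum_{\ell=0}^{m}w_\ell s_\ell^3\,e^{-s_\ell^2 k^2/4}.
\end{equation}
The truncation error $\mathcal{E}_{\rm mid}^{\rm trun}(\bm{r}_i)$ is then the same functional with $\mathcal{L}$ restricted to $|\bm{k}|>K_{\max}^{\rm mid}$. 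Crucially, the $k^2$ in $\widehat{\tau}^{\rm mid}_{\text{SOG}}$ has already cancelled the Laplacian factor $1/k^2$, so $\widehat{G}$ carries no inverse-power singularity and is a pure sum of Gaussians; since $s_0<s_1<\cdots<s_m$, its slowest-decaying term is $\ell=0$, and for large $|\bm{k}|$ one has $\widehat{G}(k)\lesssim C\,e^{-s_0^2 k^2/4}$, the remaining terms being exponentially smaller and absorbed into $C$.

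Next I would invoke the DH assumption (Appendix~\ref{app::DH}), exactly as in~\cite{kolafa1992cutoff}, to bound the ensemble-averaged structure factor $\langle|\rho(\bm{k})|^2\rangle=O(N)$ uniformly in $\bm{k}$, where $\rho(\bm{k})=\sum_j q_j e^{\mathrm{i}\bm{k}\cdot\bm{r}_j}$ and $|\sum_j q_j e^{\mathrm{i}\bm{k}\cdot\bm{r}_{ij}}|=|\rho(\bm{k})|$; this replaces the charge factor by a constant and reduces the estimate to the kernel tail alone. I then replace the mixed sum-integral $\mathcal{L}$ by the integral in Eq.~\eqref{eq::approx}, whose remainder is exponentially small by the trapezoidal-rule argument already quoted, and evaluate the Gaussian tail
\begin{equation}
\int_{K_{\max}^{\rm mid}}^{\infty} k^2\,e^{-s_0^2 k^2/4}\,dk\;\sim\;\frac{K_{\max}^{\rm mid}}{s_0^2}\,e^{-s_0^2 (K_{\max}^{\rm mid})^2/4},
\end{equation}
using the standard asymptotics $\int_a^{\infty}u^2 e^{-u^2}\,du\sim\tfrac{a}{2}e^{-a^2}$. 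This yields $\|\mathcal{E}_{\rm mid}^{\rm trun}\|_\infty=O(K_{\max}^{\rm mid}\,e^{-c\,s_0^2(K_{\max}^{\rm mid})^2})$ with an explicit constant $c>0$ coming from the $\ell=0$ Gaussian, matching the form of Eq.~\eqref{eq::M1trun}. Setting this bound $\leq\varepsilon$ and solving gives $s_0^2(K_{\max}^{\rm mid})^2\gtrsim\log(1/\varepsilon)$, i.e.\ $K_{\max}^{\rm mid}=O(\sqrt{\log(1/\varepsilon)}/s_0)$, a scaling that is insensitive to the precise $O(1)$ constant in the exponent.

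I expect the main obstacle to be the statistical step rather than the Gaussian quadrature: one must make the DH bound on $\langle|\rho(\bm{k})|^2\rangle$ precise and argue that the worst-case sup-norm error over target points $\bm{r}_i$ is genuinely controlled by this mean-square estimate. A secondary technical point is justifying the passage from the quasi-2D lattice-plus-line functional $\mathcal{L}$ to the integral of Eq.~\eqref{eq::approx} uniformly in the box aspect ratio; here I would lean on Assumption~\ref{ass:SOG} ($s_0<r_c$ with $2r_c\leq\max\{L_x,L_y\}$), which ensures the discrete modes resolve the narrowest Gaussian so that the trapezoidal remainder stays below the target tolerance.
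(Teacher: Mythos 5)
Your proposal is correct and takes essentially the same route as the paper's proof: a Debye--H\"uckel bound flattens the charge factor to a constant, the operator $\mathcal{L}$ is replaced by the integral of Eq.~\eqref{eq::approx}, and the spherical Gaussian tail $\int_{K_{\max}^{\rm mid}}^{\infty}k^2e^{-s_{\ell}^2k^2/4}\,dk$ is evaluated with the $\ell=0$ term dominant, giving the stated $O\bigl(K_{\max}^{\mathrm{mid}}e^{-cs_0^2(K_{\max}^{\mathrm{mid}})^2}\bigr)$ bound and the scaling $K_{\max}^{\mathrm{mid}}=O(\sqrt{\log(1/\varepsilon)}/s_0)$. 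The only immaterial difference is ordering: the paper integrates out the angular variable first and applies the DH bound to the resulting sinc-weighted sum $\mathcal{G}(r_i,k)=\sum_j q_j\sin(kr_{ij})/(kr_{ij})$, whereas you bound the plane-wave sum $\rho(\bm{k})$ directly (both cases are covered by Appendix~\ref{app::DH}), and your exponent retains the factor $1/4$ that the paper absorbs into the $O(1)$ constant.
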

\begin{proof}
We replace the summation functional in Eq.~\eqref{eq::PhiNDF} by an integral as outlined in Eq.~\eqref{eq::approx}, and then truncate it at $|\bm{k}|\leq K_{\max}^{\mathrm{mid}}$, resulting in
\begin{equation}\label{eq::EtrunM1}
\begin{split}
\mathcal{E}_{\rm mid}^{\text{trun}}(\bm{r}_i)&\approx \frac{1}{4\sqrt{\pi}}\sum_{\ell=0}^{M}w_{\ell}s_{\ell}^3\sum_{j=1}^{N}q_{j}\int_{K_{\max}^{\text{mid}}}^{\infty}k^2e^{-s_{\ell}^2k^2/4}\int_{-1}^{1}e^{ikr_{ij}\cos\theta}d\cos\theta dk\\
&=\frac{1}{4\sqrt{\pi}}\sum_{\ell=0}^{M}w_{\ell}s_{\ell}^3\int_{K_{\max}^{\text{mid}}}^{\infty}k^2e^{-s_{\ell}^2k^2/4}\mathcal{G}(r_i,k)dk
\end{split}
\end{equation}
where the kernel reads
\begin{equation}
\mathcal{G}(r_i,k):=\sum_{j=1}^{N}q_{j}\frac{\sin(kr_{ij})}{kr_{ij}}.
\end{equation}
By the DH theory outlined in Appendix~\ref{app::DH}, one can derive $\|\mathcal{G}(r_i,k)\|_\infty\leq Cq_{\max}$ with $C$ a certain constant and $q_{\max}:=\max\limits_{j}|q_{j}|$. Substituting this result into Eq.~\eqref{eq::EtrunM1} yields
\begin{equation}
\|\mathcal{E}_{\rm mid}^{\text{trun}}(\bm{r}_i)\|_\infty\leq \frac{Cq_{\max}}{16\sqrt{\pi}}\sum_{\ell=0}^{M}w_{\ell} \left(\sqrt{\pi}+2s_{\ell}K_{\max}^{\text{mid}}\right)e^{-s_{\ell}^2(K_{\max}^{\text{mid}})^2}\sim O\left(K_{\max}^{\mathrm{mid}}e^{-s_{0}^2(K_{\max}^{\text{mid}})^2}\right).
\end{equation}
\end{proof}

In Steps $1-3$ of Algorithm~\ref{al::mid}, we sample $S_{\text{grid}}$ using a truncated window function, then obtain $\widehat{S}_{\text{grid}}$ from $S_{\text{grid}}$ via a forward FFT, where the Fourier integrals are uniformly discretized. Subsequently, $\widehat{S}_{\text{scal}}$ is derived by scaling $\widehat{S}_{\text{grid}}$ by a factor. The resulting error reflected in $\Phi_{\text{SOG}}^{\rm mid}$, denoted as $\mathcal{E}_{\rm mid}^{\text{scal}}$, depends on the chosen window. For instance, Theorem~\ref{thm::scal} provides an estimate specifically for the Gaussian window defined as:
\begin{equation}\label{eq::GS}
    W_{\text{GS}}(x)=\begin{cases}e^{-\mathcal{S}\left(x/H\right)^2},\quad& |x|\leq H,\\
    0,\quad&\text{otherwise},
    \end{cases}\,\quad \widehat{W}_{\text{GS}}(k_x)=\sqrt{\frac{\pi}{\mathcal{S}}}He^{-k_x^2H^2/(4\mathcal{S})},
 \end{equation}
 where $\mathcal{S}>0$ is a shape parameter and $H>0$ is the half-width of the window function. Note that $W_{\text{GS}}$ is compactly supported on $[-H, H]$, while its Fourier transform, $\widehat{W}_{\text{GS}}$, exhibits Gaussian decay.

\begin{theorem}
    \label{thm::scal}
    Given a grid size $h_d$ with $d\in\{x,y,z\}$ and a Gaussian window $\widehat{W}=\widehat{W}_{\emph{GS}}$ with half-width support $H_d$ in each dimension, the number of  grid points within the support of the window is $\mathcal{P}_d=2H_d/h_d+1$. If the bandwidth of the window is smaller than that of mid-range Gaussians, i.e. 
\begin{equation}\label{eq::condition}
H_d^2/\mathcal{S}<s_{\ell}^2\leq s_{m}^2,\quad \ell=0,1,\cdots,m,
\end{equation} 
we have the estimate
\begin{equation}
\left\|\mathcal{E}_{\rm mid}^{\emph{scal}}(\bm{r}_i)\right\|_\infty = O\left(\max_{d\in\{x,y,z\}}\left\{\sum_{\ell=0}^{m}w_{\ell}e^{-\frac{\pi^2\mu_{d}^{\ell}(\mathcal{P}_d-1)^2}{4s_{\ell}^2\mathcal{S}}}\right\}+\erfc\left(\sqrt{\mathcal{S}}\right)\right),
\end{equation}
where the coefficients are given by $\mu_{d}^{\ell} = s_{\ell}^2 - H_d^2 / \mathcal{S}$ for $\ell = 0, 1, \dots, m$ and $d\in\{x,y,z\}$.
\end{theorem}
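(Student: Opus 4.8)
The plan is to push the discrete computation of Algorithm~\ref{al::mid} — gridding with the truncated window \eqref{eq::GS}, a forward FFT, and the scaling \eqref{eq::widehat-tilde} — into Fourier space and compare it mode by mode against the exact functional \eqref{eq::PhiNDF}. Only two approximations feed into $\mathcal{E}_{\rm mid}^{\text{scal}}$: replacing the continuous Fourier integral that defines $\widehat{S}_{\text{grid}}$ in \eqref{eq::grid} by the trapezoidal/FFT sum on the uniform grid (aliasing), and replacing the full-line Gaussian by the compactly supported window \eqref{eq::GS} (truncation). I would isolate these two effects and show they produce, respectively, the $\max_d$ aliasing sum and the $\erfc(\sqrt{\mathcal{S}})$ term.

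For the aliasing, I would invoke the Poisson summation formula: sampling $S_{\text{grid}}$ on a grid of spacing $h_d$ and applying the FFT yields, for a base mode $\bm{k}$ in the fundamental Brillouin zone, the aliased transform $\sum_{\bm{p}}\widehat{W}(\bm{k}+\bm{\kappa}_{\bm{p}})\rho(\bm{k}+\bm{\kappa}_{\bm{p}})$, where $\bm{\kappa}_{\bm{p}}=(2\pi p_x/h_x,2\pi p_y/h_y,2\pi p_z/h_z)$ and $\rho(\bm{k})=\sum_j q_j e^{-i\bm{k}\cdot\bm{r}_j}$. The term $\bm{p}=\bm{0}$ reproduces the exact $\Phi^{\rm mid}_{\text{SOG}}$ after the window cancels in \eqref{eq::eq33}, so the discretization error is exactly the contribution of $\bm{p}\neq\bm{0}$. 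Substituting into \eqref{eq::eq33} leaves, for each alias, the window product $\widehat{W}(\bm{k})\,|\widehat{W}(\bm{k})|^{-2}\,\widehat{W}(\bm{k}+\bm{\kappa}_{\bm{p}})=\widehat{W}(\bm{k}+\bm{\kappa}_{\bm{p}})/\widehat{W}(\bm{k})$ (using that $\widehat{W}_{\text{GS}}$ is real and positive), multiplied by the SOG kernel $A(\bm{k})=\widehat{\tau}^{\rm mid}_{\text{SOG}}(\bm{k})/k^2=\tfrac{\sqrt{\pi}}{4}\sum_{\ell=0}^{m}w_{\ell}s_{\ell}^3 e^{-s_{\ell}^2 k^2/4}$ evaluated at the base mode. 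Since all factors are tensor products of Gaussians, for the nearest alias in a single direction $d$, namely $\bm{\kappa}_{\bm{p}}=\pm(2\pi/h_d)\bm{e}_d$, the $\ell$-th summand carries the one-dimensional exponent $-s_{\ell}^2k_d^2/4-\tfrac{H_d^2}{4\mathcal{S}}(2k_d\kappa_d+\kappa_d^2)$ in the free variable $k_d$, with $\kappa_d=2\pi/h_d$. Completing the square bounds this exponent by its unconstrained maximum $-H_d^2\kappa_d^2\mu_d^{\ell}/(4\mathcal{S}s_{\ell}^2)$, with $\mu_d^{\ell}=s_{\ell}^2-H_d^2/\mathcal{S}$; using $\kappa_d=2\pi/h_d$ and $\mathcal{P}_d-1=2H_d/h_d$ this is exactly $-\pi^2\mu_d^{\ell}(\mathcal{P}_d-1)^2/(4s_{\ell}^2\mathcal{S})$. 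Here the hypothesis \eqref{eq::condition}, $H_d^2/\mathcal{S}<s_{\ell}^2$, is precisely what forces $\mu_d^{\ell}>0$, so the Gaussian decay of the SOG kernel dominates the growth of $|\widehat{W}|^{-2}$ and the alias is genuinely damped. Converting $\mathcal{L}$ to the integral \eqref{eq::approx}, bounding the structure factors $|\rho(\bm{k}+\bm{\kappa}_{\bm{p}})|$ by a constant, and integrating the Gaussian over $\bm{k}$ leaves the peak value $e^{-\pi^2\mu_d^{\ell}(\mathcal{P}_d-1)^2/(4s_{\ell}^2\mathcal{S})}$ up to algebraic prefactors absorbed into $O(\cdot)$; summing the $m+1$ Gaussians with weights $w_{\ell}$ and retaining the slowest-decaying direction gives the $\max_d\{\cdots\}$ term, while higher aliases $|\bm{p}|\geq 2$ and the remaining directions are strictly smaller.

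For the truncation, I would estimate the mismatch between the analytic transform $\widehat{W}_{\text{GS}}$ used in the scaling \eqref{eq::widehat-tilde} and the transform actually realised by the compactly supported window, which differ by the Fourier transform of the discarded tails $e^{-\mathcal{S}(x/H)^2}\mathbbm{1}_{|x|>H}$. Since the window equals $e^{-\mathcal{S}}$ at the support boundary, the tail mass is $\int_{H}^{\infty}e^{-\mathcal{S}x^2/H^2}\,dx=\tfrac{\sqrt{\pi}}{2}\tfrac{H}{\sqrt{\mathcal{S}}}\erfc(\sqrt{\mathcal{S}})$, which after normalising by the window weight contributes a uniform term of order $\erfc(\sqrt{\mathcal{S}})$ to the potential. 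Adding the two contributions and taking the supremum over target points $\bm{r}_i$ produces the stated estimate.

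The main obstacle is the bookkeeping in the aliasing step: one must keep the scaling factor $A\,|\widehat{W}|^{-2}$ pinned to the base mode $\bm{k}$ while the gridding window rides on the shifted mode $\bm{k}+\bm{\kappa}_{\bm{p}}$, then show that summing the full multi-dimensional alias lattice reduces, to leading exponential order, to the single nearest one-directional alias — the completing-the-square bound being valid over the whole Brillouin zone precisely because it is the unconstrained maximum. The remaining pieces — Poisson summation, the Gaussian tail integral, and the conversion of $\mathcal{L}$ to \eqref{eq::approx} — are routine once this structure is in place.
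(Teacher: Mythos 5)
Your proposal is correct and takes essentially the same route as the paper's own proof: both decompose $\mathcal{E}_{\rm mid}^{\text{scal}}$ into an aliasing part, treated by Poisson summation with the nearest nonzero lattice images dominating and the completed-square exponent $-\pi^2\mu_d^{\ell}(\mathcal{P}_d-1)^2/(4s_{\ell}^2\mathcal{S})$ emerging from the Gaussian integrals (with condition~\eqref{eq::condition} guaranteeing $\mu_d^{\ell}>0$), plus a window-truncation part bounded by the Gaussian tail mass yielding $\erfc(\sqrt{\mathcal{S}})$. The only cosmetic differences are that the paper phrases the truncation error as a lattice tail sum $C_W$ bounded by the tail integral rather than as a Fourier-transform mismatch of the truncated window, and invokes the Debye--H\"uckel theory explicitly to bound the structure factors where you simply call them a constant.
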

\begin{proof}
To estimate $\mathcal{E}_{\rm mid}^{\text{scal}}$, let us denote 
\begin{equation}
\overline{\bm{r}}_j = \bm{r}_j - \bm{u}_j \circ \bm{h},\quad \bm{u}_j = [u_{j,x}, u_{j,y}, u_{j,z}],
\end{equation}
as the position of the nearest grid points of the $j$th particle, where $|u_{j,d}| \leq 1/2$ and $\bm{h} = [h_x, h_y, h_z]$. Following Steps $1-3$ of Algorithm~\ref{al::mid}, $\mathcal{E}_{\rm mid}^{\text{scal}}$ can be analytically expressed as
\begin{equation}\label{eq::4.11}
\mathcal{E}_{\rm mid}^{\text{scal}}(\bm{r}_i) = 4\pi\mathcal{L}\left[\left(\widehat{S}_{\text{scal}}(\bm{k}) - \widehat{S}_{\text{scal}}^{\bm{\mathcal{P}}}(\bm{k})\right)\widehat{W}(\bm{k})e^{\mathrm{i}\bm{k}\cdot\bm{r}_i}\right],
\end{equation}
where
\begin{equation}
\widehat{S}_{\text{scal}}^{\bm{\mathcal{P}}}(\bm{k}) := \dfrac{\widehat{\tau}_{\text{SOG}}^{\rm mid}(\bm{k})}{k^2|\widehat{W}(\bm{k})|^{2}}\sum_{j=1}^{N}q_j \left[V_h\sum_{d\in\{x,y,z\}}\sum_{\mathcal{N}_d=-\frac{\mathcal{P}_{d}-1}{2}}^{\frac{\mathcal{P}_{d}-1}{2}}W\left((\bm{u}_j+\bm{\mathcal{N}}\right)\circ \bm{h})e^{-i\bm{k}\cdot(\bm{r}_j+\bm{\mathcal{N}}\circ\bm{h})}\right],
\end{equation}
with $\bm{h} = [h_x, h_y, h_z]$ and $V_h := h_xh_yh_z$ denoting the spatial size and volume of each grid. Eq.~\eqref{eq::4.11} can be further decomposed as 
\begin{equation}
\mathcal{E}_{\rm mid}^{\text{scal}} = \mathcal{E}_{\rm mid}^{\text{scal},\bm{\infty}} + \mathcal{E}_{\rm mid}^{\text{scal},\bm{\mathcal{P}}},
\end{equation}
where
\begin{equation}
\mathcal{E}_{\rm mid}^{\text{scal},\bm{\infty}}(\bm{r}_i):=4\pi\mathcal{L}\left[\left(\widehat{S}_{\text{scal}}(\bm{k}) - \widehat{S}_{\text{scal}}^{\bm{\infty}}(\bm{k})\right)\widehat{W}(\bm{k})e^{\mathrm{i}\bm{k}\cdot\bm{r}_i}\right]
\end{equation}
with $\widehat{S}_{\text{scal}}^{\bm{\infty}}(\bm{k}) = \lim\limits_{\bm{\mathcal{P}} \rightarrow \bm{\infty}} \widehat{S}_{\text{scal}}^{\bm{\mathcal{P}}}(\bm{k})$, and
\begin{equation}
\mathcal{E}_{\rm mid}^{\text{scal},\bm{\mathcal{P}}}(\bm{r}_i) := 4\pi\mathcal{L}\left[\left(\widehat{S}_{\text{scal}}^{\bm{\infty}}(\bm{k}) - \widehat{S}_{\text{scal}}^{\bm{\mathcal{P}}}(\bm{k})\right)\widehat{W}(\bm{k})e^{\mathrm{i}\bm{k}\cdot\bm{r}_i}\right].
\end{equation}
Here, $\mathcal{E}_{\rm mid}^{\text{scal},\bm{\infty}}$ is the error due to discretization of Fourier integral, and $\mathcal{E}_{\rm mid}^{\text{scal},\bm{\mathcal{P}}}$ is the error due to the truncation of the discrete sum, which is influenced by the fast decay of the window functions. Next, we analyze these two components individually.

We bound the discretization error $\mathcal{E}_{\rm mid}^{\text{scal},\bm{\infty}}$ by leveraging the periodicity of $W$ and applying the Poisson summation formula (see Appendix~\ref{subsec::PoissonSummation}) to reformulate the difference between $\widehat{S}_{\text{scal}}$ and $\widehat{S}_{\text{scal}}^{\bm{\infty}}$:
\begin{equation}
\label{eq::scal_D}
\begin{split}
\mathcal{E}_{\rm mid}^{\text{scal},\bm{\infty}}(\bm{r}_i)
&=4\pi\mathcal{L}\left[\dfrac{\widehat{\tau}_{\text{SOG}}^{\rm mid}(\bm{k})}{k^2\widehat{W}(\bm{k})}\sum_{j=1}^{N}q_{j}e^{i\bm{k}\cdot\bm{r}_{ij}}\sum_{\bm{n}\in\mathbb{Z}^3\slash\{\bm{0}\}}\widehat{\mathscr{T}\,}(2\pi\bm{n}\circ \bm{h}_{\text{inv}})\right],
\end{split}
\end{equation}
where $\bm{h}_{\text{inv}}:=[1/h_x,1/h_y,1/h_z]$ and $\widehat{\mathscr{T}\, }$ is the Fourier transform of $\mathscr{T}(\bm{k})=\widehat{W}(\bm{k})e^{-i\bm{k}\cdot\bm{r}}$. By the DH theory again, we have $|\sum_{j}q_{j}e^{-i\bm{k}\cdot\bm{r}_{ij}}|\leq Cq_{\max}$, where $C$ is a constant. By this bound and substituting the expression of Gaussian window in Eq.~\eqref{eq::GS} into Eq.~\eqref{eq::scal_D},
we have
\begin{equation}
\begin{split}
\left\|\mathcal{E}_{\rm mid}^{\text{scal},\bm{\infty}}(\bm{r}_i)\right\|_{\infty}\leq 4\pi C q_{\text{max}}\mathcal{L}\left[\frac{\widehat{\tau}_{\text{SOG}}^{\rm mid}(\bm{k})}{k^2e^{-|\bm{k}\circ\bm{H}|^2/(4\mathcal{S})}}\sum\limits_{\bm{n}\in \mathbb{Z}^3\backslash\{0\}}e^{-|(\bm{k}+2\pi\bm{n}\circ\bm{h}_{\text{inv}})\circ\bm{H}|^2/(4\mathcal{S})}\right],
\end{split}
\end{equation}
where and $\bm{H}=[H_x, H_y, H_z]$. Replacing the summation functional with a triple integral similar to Eq.~\eqref{eq::approx}, we obtain
\begin{equation}
\begin{aligned}
\left\|\mathcal{E}_{\rm mid}^{\text{scal},\bm{\infty}}(\bm{r}_i)\right\|_{\infty}&\leq\frac{Cq_{\text{max}}}{8\pi^{3/2}}\sum_{\ell=0}^{m}w_{\ell}s_{\ell}^3\sum\limits_{\bm{n}\in \mathbb{Z}^3\backslash\{\bm{0}\}}\prod_{d\in\{x,y,z\}}\int_{\mathbb{R}}e^{-\left(\frac{\mu_d^{\ell}k_d^2}{4}+\frac{H_d^2(k_d+2\pi n_d/h_d)^2}{4\mathcal{S}}\right)}dk_d\\
&=Cq_{\max}\sum_{\ell=0}^{m}w_{\ell}\sum_{\bm{n}\in\mathbb{Z}^3\backslash\{\bm{0}\}}\prod_{d\in\{x,y,z\}}e^{-\frac{\pi^2\mu_{d}^{\ell}H_d^2n_d^2}{\mathcal{S}s_{\ell}^2h_d^2}}.
\end{aligned}
\end{equation}
It is observed that the main contribution to the summation comes from the terms corresponding to $|\bm{n}|=1$, leading to
\begin{equation}\label{eq::4.17}
\left\|\mathcal{E}_{\rm mid}^{\text{scal},\bm{\infty}}(\bm{r}_i)\right\|_{\infty} = O\left(\max_{d\in\{x,y,z\}}\left\{\sum_{\ell=0}^{m}w_{\ell}e^{-\frac{\pi^2\mu_{d}^{\ell}(\mathcal{P}_d-1)^2}{4s_{\ell}^2\mathcal{S}}}\right\}\right).
\end{equation}

Similarly, by applying the DH theory and replacing the summation functional with an integral, the remainder term $\mathcal{E}_{\rm mid}^{\text{scal},\bm{\mathcal{P}}}(\bm{r}_i)$ can be bounded by
\begin{equation}\label{eq::4.190}
\begin{split}
\left\|\mathcal{E}_{\rm mid}^{\text{scal},\bm{\mathcal{P}}}(\bm{r}_i)\right\|_{\infty}&\leq \frac{CC_{W}q_{\max}}{2\pi^2}\int_{\mathbb{R}^3}\frac{\widehat{\tau}_{\text{SOG}}^{\rm mid}(\bm{k})}{k^2\left|\widehat{W}(\bm{k})\right|}d\bm{k},
\end{split}
\end{equation}
where
\begin{equation}\label{eq::CW}
C_{W}:=\left|V_{h}\sum_{\mathbb{Z}^3\backslash\left\{|\mathcal{N}_d|
\leq\frac{\mathcal{P}_{d}-1}{2}\right\}}W\left((\bm{u}_j+\bm{\mathcal{N}}\right)\circ \bm{h})\right|
\leq \int_{\mathbb{R}^3\backslash \{|\mathcal{N}_d|\leq H_d\}}e^{-\sum\limits_{d}(\mathcal{N}_d/H_d)^2\mathcal{S}}d\bm{\mathcal{N}}
\end{equation}
is a constant with respect to the window function. In Eq.~\eqref{eq::CW}, we use the monotone decreasing property of the Gaussian window. Since the integration domain on the RHS of Eq.~\eqref{eq::CW} is the entire space subtracted by a cube, we have
\begin{equation}\label{eq::4.21}
\begin{split}
C_{W}&\leq \left(\sqrt{\frac{\pi}{\mathcal{S}}}\right)^3\erfc\left(\sqrt{\mathcal{S}}\right)\left[1+\erf\left(\sqrt{\mathcal{S}}\right)+\erf\left(\sqrt{\mathcal{S}}\right)^2\right]\prod_{d\in\{x,y,z\}}H_d\\
&\leq 3 \left(\sqrt{\frac{\pi}{\mathcal{S}}}\right)^3 \erfc\left(\sqrt{\mathcal{S}}\right)\prod_{d\in\{x,y,z\}}H_d.
\end{split}
\end{equation}
Substituting Eq.~\eqref{eq::4.21} into Eq.~\eqref{eq::4.190} yields  
\begin{equation}\label{eq::4.22}
\begin{split}
\left\|\mathcal{E}_{\rm mid}^{\text{scal},\bm{\mathcal{P}}}(\bm{r}_i)\right\|_{\infty}&\leq 3Cq_{\text{max}}\left[\sum_{\ell=0}^{m}w_{\ell}\prod_{d\in\{x,y,z\}}\frac{s_{\ell}}{\mu_{d}^{\ell}}\right]\erfc(\sqrt{\mathcal{S}})\sim O\left(\erfc\left(\sqrt{\mathcal{S}}\right)\right).
\end{split}
\end{equation}
By combining Eqs.~\eqref{eq::4.17} and \eqref{eq::4.22}, we conclude the proof.
\end{proof}


It worth noting that Theorem~\ref{thm::scal} can be readily extended to other windows with Gaussian-type decay tails like the KB and ES. In the gathering step of Algorithm~\ref{al::mid}, the infinite integral in Eq.~\eqref{eq::Phiconv} is confined to the interval $[-L_z^*/2, L_z^*/2]$. 
Consequently, this modification renders Eq.~\eqref{eq::Phiconv} into a form amenable to discretization:
\begin{equation}\label{eq::phitrun}
\Phi_{\mathrm{SOG}}^{\rm mid}\left(\boldsymbol{r}_i\right)\approx 4 \pi \int_{-L_z^*/2}^{L_z^*/2} \int_{\mathcal{R}^2} S_{\mathrm{scal}}\left(\left[\dot{\boldsymbol{r}}_j, z_j\right]\right) W\left(\dot{\boldsymbol{r}}_i-\dot{\boldsymbol{r}}_j\right)_* W\left(z_i-z_j\right)_* d \dot{\boldsymbol{r}}_j d z_j.
\end{equation}
The integral truncation error, denoted as $\mathcal{E}_{\rm mid}^{\text{padd}}$, is characterized by the difference between Eq.~\eqref{eq::Phiconv} and Eq.~\eqref{eq::phitrun}. Next, we employ the trapezoidal rule to discretize Eq.~\eqref{eq::phitrun}, truncating the window to a certain support. 
The involved error, $\mathcal{E}_{\rm mid}^{\text{gath}}$, also depends on the selected window. For instance, Theorem~\ref{thm::pad} provides an estimation of $\mathcal{E}_{\rm mid}^{\text{padd}}$ and $\mathcal{E}_{\rm mid}^{\text{gath}}$, tailored to the Gaussian window.

\begin{theorem}
\label{thm::pad}   
Under the same assumption as in Theorem \ref{thm::scal},
we have the following estimates:
\begin{equation}
\label{eq::M1padd}
\|\mathcal{E}_{\rm mid}^{\emph{padd}}(\bm{r}_i)\|_{\infty}
=O\left(\erfc\left[\frac{\lambda_z(L_{z}+\delta_z)}{2\sqrt{\mu_{z}^{m}}}\right]\right),
\end{equation}
and
\begin{equation}
\label{eq::M1trap}
\left\|\mathcal{E}_{\rm mid}^{\emph{gath}}(\bm{r}_i)\right\|_{\infty}= O\left(\max_{d\in\{x,y,z\}}\left\{\sum_{\ell=0}^{m}w_{\ell}e^{-\frac{\pi^2\mu_{d}^{\ell}(\mathcal{P}_d-1)^2}{4s_{\ell}^2\mathcal{S}}}\right\}+\erfc\left(\sqrt{\mathcal{S}}\right)\right).
\end{equation}
\end{theorem}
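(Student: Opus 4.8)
The plan is to prove the two estimates separately, exploiting in each case the explicit Gaussian structure of $S_{\text{scal}}$ along the free direction. I would first make that structure explicit. Inverting the scaling relation Eq.~\eqref{eq::widehat-tilde} and using the closed forms of $\widehat{\tau}_{\text{SOG}}^{\rm mid}$ and of the Gaussian window Eq.~\eqref{eq::GS}, the multiplier $\widehat{\tau}_{\text{SOG}}^{\rm mid}(\bm{k})/(k^2\widehat{W}(\bm{k}))$ collapses, in each Cartesian direction, to a Gaussian of exponent $-\mu_d^{\ell}k_d^2/4$ with $\mu_d^{\ell}=s_\ell^2-H_d^2/\mathcal{S}$, exactly the coefficients of Theorem~\ref{thm::scal}. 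Taking the inverse transform then shows that $S_{\text{scal}}([\dot{\bm{r}},z])$ is, along $z$, a superposition over $\ell=0,\dots,m$ and over the sources of Gaussians $e^{-(z-z_j)^2/\mu_z^{\ell}}$ centered in the physical slab $[-L_z/2,L_z/2]$. This representation is the common engine for both bounds.

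For the padding estimate, $\mathcal{E}_{\rm mid}^{\text{padd}}$ is the discrepancy between Eq.~\eqref{eq::Phiconv}, which integrates the true $S_{\text{scal}}$ over all of $\mathbb{R}$, and Eq.~\eqref{eq::phitrun}, in which the backward FFT forces $S_{\text{scal}}$ to be $L_z^*$-periodic; the difference is precisely the aliasing produced by the Gaussian tails of $S_{\text{scal}}$ that leave the fundamental interval $[-L_z^*/2,L_z^*/2]$ and wrap around. Bounding the charge structure factor by the same DH/charge-neutrality estimate used in Theorems~\ref{thm::trunFour} and \ref{thm::scal}, the superposition is effectively centered in the slab, so the error is controlled by the residual tail mass $\int_{L_z^*/2}^{\infty} e^{-z^2/\mu_z^{m}}\,dz \propto \erfc\!\left(L_z^*/(2\sqrt{\mu_z^{m}})\right)$ of the widest ($\ell=m$) Gaussian; the narrower Gaussians decay faster and are absorbed into the constant. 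Substituting $L_z^*=\lambda_z(L_z+\delta_z)$ yields Eq.~\eqref{eq::M1padd}.

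For the gathering estimate the key observation is that the gathering step is the adjoint of the gridding/scaling step already treated in Theorem~\ref{thm::scal}, so the argument mirrors that proof line by line, and I would split $\mathcal{E}_{\rm mid}^{\text{gath}}$ into a trapezoidal-discretization part and a window-truncation part. Applying the Poisson summation formula to the smooth integrand $S_{\text{scal}}\cdot W$ in Eq.~\eqref{eq::phitrun} converts the trapezoidal error into a sum over nonzero reciprocal-lattice modes; the Gaussian decay of both $S_{\text{scal}}$ and $\widehat{W}$ leaves, as in Eq.~\eqref{eq::4.17}, the dominant $|\bm{n}|=1$ contribution $\max_{d}\{\sum_{\ell}w_\ell e^{-\pi^2\mu_d^{\ell}(\mathcal{P}_d-1)^2/(4s_\ell^2\mathcal{S})}\}$. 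Truncating the window to $[-H_d,H_d]$ then discards the tail $\int_{|x|>H}e^{-\mathcal{S}(x/H)^2}\,dx\propto H\,\erfc(\sqrt{\mathcal{S}})$, reproducing the constant $C_W$ of Eq.~\eqref{eq::CW} and the $\erfc(\sqrt{\mathcal{S}})$ term, which gives Eq.~\eqref{eq::M1trap}. The main obstacle is the geometric bookkeeping in the padding bound: identifying that the $\ell=m$ image dominates the wrap-around and justifying the effective centering of $S_{\text{scal}}$ so that the clean argument $L_z^*/(2\sqrt{\mu_z^{m}})$ emerges rather than a source-edge–dependent constant; once the adjoint duality with Theorem~\ref{thm::scal} is set up, the gathering estimate is essentially routine.
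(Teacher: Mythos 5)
Your proof is correct and takes essentially the same route as the paper's: your structural observation that $S_{\text{scal}}$ is a superposition of Gaussians with widths $\mu_d^{\ell}=s_{\ell}^2-H_d^2/\mathcal{S}$ is exactly the paper's bound \eqref{eq::4.26}, your padding bound via the $z$-tail of the widest ($\ell=m$) Gaussian reproduces \eqref{eq::4.14}--\eqref{eq::padd}, and your gathering split into a Poisson-summation (trapezoidal) part dominated by $|\bm{n}|=1$ plus a window-truncation part of size $O\left(\erfc\left(\sqrt{\mathcal{S}}\right)\right)$ matches \eqref{eq::trap}--\eqref{eq::4.23}. The centering subtlety you flag is resolved in the paper exactly as you anticipate, by applying the Debye--H\"uckel bound to the structure factor in Fourier space before inverting, which yields an origin-centered envelope independent of the source positions.
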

\begin{proof}
Comparing Eq.~\eqref{eq::phitrun} with Eq.~\eqref{eq::Phiconv} and shifting the periodicity into the interval, we obtain
\begin{equation}\label{eq::4.14}
\begin{split}
\|\mathcal{E}_{\rm mid}^{\text{padd}}(\bm{r}_i)\|_{\infty}&\leq 4\pi\int_{\mathbb{R}^3\backslash\{|z_j|\leq L_z^*/2\}}|S_{\text{scal}}(\bm{r}_j)|W(\bm{r}_i-\bm{r}_j)d\bm{r}_j.
\end{split}
\end{equation}
By the definitions of $S_{\text{scal}}$ and the DH theory, we have
\begin{equation}\label{eq::4.26}
\begin{split}
|S_{\text{scal}}(\bm{r})|
&\leq \frac{C_{\text{scal}}q_{\max}}{32\pi^{5/2}}\sum_{\ell=0}^{m}w_{\ell}s_{\ell}^3\int_{\mathbb{R}^3}e^{-s_{\ell}^2k^2/4}\widehat{W}(\bm{k})^{-1}e^{i\bm{k}\cdot\bm{r}}d\bm{k}\\
&=\frac{C_{\text{scal}}q_{\max}}{4\pi^{5/2}}\sum_{\ell=0}^{m}w_{\ell}s_{\ell}^3\prod_{d\in\{x,y,z\}}\sqrt{\frac{\mathcal{S}}{\mu_{d}^{\ell}H_d^2}}e^{-r_d^2/\mu_{d}^{\ell}},
\end{split}
\end{equation}
where the positive constant $C_{\text{scal}}$ is due to the application of DH and integral transform. Eq.~\eqref{eq::4.26} gives an estimate of $S_{\text{scal}}$, and an elementary calculation shows that
\begin{equation}\label{eq::Wz}
\left|\int_{\mathbb{R}\backslash\{|z_j|\leq L_z^*/2\}}e^{-z_j^2/\mu_{z}^{\ell}}W(z_i-z_j)dz_j\right|\leq \sqrt{\pi\mu_{z}^{\ell}}\erfc\left(\frac{L_{z}^*}{2\sqrt{\mu_{z}^{\ell}}}\right).
\end{equation}
Substituting Eqs.~\eqref{eq::4.26} and \eqref{eq::Wz} into Eq.~\eqref{eq::4.14}, we have 
\begin{equation}\label{eq::padd}
\begin{split}
\|\mathcal{E}_{\rm mid}^{\text{padd}}(\bm{r}_i)\|_{\infty}&\leq C_{\text{scal}}q_{\max}\sum_{\ell=0}^{m}w_{\ell}s_{\ell}^3\frac{\mathcal{S}^{3/2}}{H_xH_yH_z}\erfc\left(\frac{L_{z}^*}{2\sqrt{\mu_{z}^{\ell}}}\right)\\
&= O\left(\erfc\left[\frac{\lambda_z(L_{z}+\delta_z)}{2\sqrt{\mu_{z}^{m}}}\right]\right)
\end{split}
\end{equation}
where we recall $\mu^{\ell}_{z}=s_{\ell}^2-H_z^2/\mathcal{S}$ and $s_{0}<s_{1}\cdots<s_{m}$.

The estimate of $\mathcal{E}_{\rm mid}^{\text{gath}}$ closely follows the approach we used in the proof of Theorem~\ref{thm::scal}. Recall that $\overline{\bm{r}}_{i}$ denotes the position of the nearest grid points of $i$th particle. One can write $\mathcal{E}_{\rm mid}^{\text{gath}}$ as the sum of two components, 
\begin{equation}
\mathcal{E}_{\rm mid}^{\text{gath},\bm{\infty}}(\bm{r}_i)=\Phi_{\text{SOG}}^{\rm mid}(\bm{r}_i)-\mathcal{T}_{\infty}(\bm{r}_i),\quad \text{and}\quad \mathcal{E}_{\rm mid}^{\text{gath},\bm{\mathcal{P}}}(\bm{r}_i)=\mathcal{T}_{\infty}(\bm{r}_i)-\mathcal{T}_{\bm{\mathcal{P}}}(\bm{r}_i),
\end{equation}
where 
\begin{equation}
\mathcal{T}_{\bm{\mathcal{P}}}(\bm{r}_i):=4\pi V_{h}\sum_{\mathcal{N}_d=-\frac{\mathcal{P}_{d}-1}{2}}^{\frac{\mathcal{P}_{d}-1}{2}}S_\text{scal}\left(\bar{\bm{r}}_i+\bm{\mathcal{N}}\circ\bm{h}\right)W(\bm{u}_{i}\circ\bm{h}+\bm{\mathcal{N}}\circ \bm{h})_*
\end{equation}
with $\mathcal{T}_{\bm{\infty}}(\bm{r}_i)=\lim\limits_{\bm{\mathcal{P}}\rightarrow\bm{\infty}}\mathcal{T}_{\bm{\mathcal{P}}}(\bm{r}_i)$. Here, $\mathcal{E}_{\rm mid}^{\text{gath},\bm{\infty}}$ and $\mathcal{E}_{\rm mid}^{\text{gath},\bm{\mathcal{P}}}$ arise from the application of the trapezoidal rule and the truncation of window functions, respectively. By the Poisson summation formula, we bound $\mathcal{E}_{\rm mid}^{\text{gath},\bm{\infty}}$ by 
\begin{equation}\label{eq::trap}
\begin{split}
\left\|\mathcal{E}_{\rm mid}^{\text{gath},\bm{\infty}}(\bm{r}_i)\right\|_{\infty}&\leq 4\pi \sum_{\bm{n}\in\mathbb{Z}^3\backslash\{\bm{0}\}} \left|(\widehat{S}_{\text{scal}}*\widehat{W})(2\pi\bm{n}\circ \bm{h}_{\text{inv}})\right|.
\end{split}
\end{equation}
By the definitions of $\widehat{S}_{\text{scal}}$, the Gaussian window, and also by the DH theory, we obtain
\begin{equation}\label{eq::Sscal}
\begin{split}
\left|(\widehat{S}_{\text{scal}}*\widehat{W})(\bm{k})\right|&\leq Cq_{\max}\sum_{\ell=0}^{m}w_{\ell}s_{\ell}^3 \int_{\mathbb{R}^3}e^{-\frac{s_{\ell}^2\mathcal{S}k'^2-|\bm{H}\circ\bm{k}'|^2+|\bm{H}\circ(\bm{k}-\bm{k}')|^2}{4\mathcal{S}}}d\bm{k}'\\
&=Cq_{\max}\sum_{\ell=0}^{m}w_{\ell}\prod\limits_{d\in\{x,y,z\}}e^{-\frac{k_d^2H_d^2\mu_{d}^{\ell}}{4s_{\ell}^2\mathcal{S}}}
\end{split}
\end{equation}
where $C$ is a certain constant. Substituting Eq.~\eqref{eq::Sscal} into Eq.~\eqref{eq::trap} yields 
\begin{equation}
\left\|\mathcal{E}_{\rm mid}^{\text{gath},\bm{\infty}}(\bm{r}_i)\right\|_{\infty}= O\left(\max_{d\in\{x,y,z\}}\left\{\sum_{\ell=0}^{m}w_{\ell}e^{-\frac{\pi^2\mu_{d}^{\ell}(\mathcal{P}_d-1)^2}{4s_{\ell}^2\mathcal{S}}}\right\}\right),
\end{equation}
as the main contribution to Eq.~\eqref{eq::trap} comes from the terms corresponding to $|\bm{n}|=1$. The remainder term, $\mathcal{E}_{\rm mid}^{\text{gath},\bm{\mathcal{P}}}$, can be  bounded by the integral 
\begin{equation}\label{eq::4.23}
\left\|\mathcal{E}_{\rm mid}^{\text{gath},\bm{\mathcal{P}}}(\bm{r}_i)\right\|_{\infty}\leq 4\pi \int_{\mathbb{R}^3\backslash \{|t_{d}|\leq H_d\}}S_\text{scal}\left(\bar{\bm{r}}_i+\bm{t}\right)W(\bm{u}_{i}\circ\bm{h}+\bm{t}) d\bm{t}.
\end{equation}
By Eq.~\eqref{eq::4.22} and Eq.~\eqref{eq::4.26}, we have
\begin{equation}
\begin{split}
\left\|\mathcal{E}_{\rm mid}^{\text{gath},\bm{\mathcal{P}}}(\bm{r}_i)\right\|_{\infty}&\leq 4\pi \|S_{\text{scal}}(\bm{r})\|_{\infty}\left|\int_{\mathbb{R}^3\backslash \{|t_{d}|\leq H_d\}}W(\bm{u}_{i}\circ\bm{h}+\bm{t}) d\bm{t}\right|\\
&\leq 3C_{\text{scal}}q_{\text{max}}\erfc\left(\sqrt{\mathcal{S}}\right)\sum_{\ell=0}^{m}w_{\ell}s_{\ell}^3\prod_{d\in\{x,y,z\}}\left(\mu_{d}^{\ell}\right)^{-1/2}\\
&= O\left(\erfc\left(\sqrt{\mathcal{S}}\right)\right).
\end{split}
\end{equation}
\end{proof}

In Theorems~\ref{thm::scal} and \ref{thm::pad}, a crucial assumption is imposed: the support of $W$ should not exceed the interaction kernel, otherwise it would render the solver inefficient. While this condition is inherently satisfied by certain previous SE methods through their parameter choices (e.g., Eq.~(16) in~\cite{lindbo2011spectral} and Eq.~(22) in~\cite{lindbo2012fast}), this appears to be the first place where it is explicitly stated. To match $\mathcal{E}_{\rm mid}^{\text{scal}}$ and $\mathcal{E}_{\rm mid}^{\text{gath}}$, we choose
\begin{equation}
\label{eq::S}
\mathcal{S}\sim \min_{d\in\{x,y,z\}}\left\{\frac{\pi\left(\mathcal{P}_d-1\right)}{2}\right\}
\end{equation}
since $\erfc(x)\leq e^{-x^2}$ holds for all $x \geq 0$ and $\mu_{d}^{\ell}\sim s_{\ell}^2$. Theorem~\ref{thm::pad} also indicates that $\mathcal{E}_{\rm mid}^{\text{padd}}$ decays exponentially with respect to $\lambda_z^2$.

\begin{remark}
The shape parameter $\beta$ in KB or ES windows~\cite{barnett2019parallel} serves a role similar to $\mathcal{S}$ in the Gaussian window. Given that the shapes of these two windows are qualitatively similar to that of the Gaussian window, the approximation error estimate is expected to be close to Theorems~\ref{thm::scal} and \ref{thm::pad}. However, conducting a rigorous analysis remains challenging.
\end{remark}

\begin{remark}
In NUFFT-based solvers~\cite{nestler2015fast,nestler2016parameter}, a two-fold upsampling is typically employed to enhance accuracy when discretizing the Fourier integral. This process involves sampling the sources on a finer grid with spacing $h_d$ divided by $\sigma_d\approx 2$ along each dimension $d\in\{x,y,z\}$, which results in the Fourier grid spacing being scaled as $2\pi/(\sigma_d L_d)$. However, based on the results in Theorems~\ref{thm::scal} and \ref{thm::pad}, the discretization error in our method is insensitive to $h_d$ as long as the condition $H_d^2/\mathcal{S} < s_{\ell}^2$ is met. Consequently, our method does not require upsampling.
\end{remark}

\subsection{Approximation errors of the long-ranged components}\label{subsec::analysFFCT}
Next, we consider the error $\mathcal{E}_{\rm long}$ due to the use of Fourier-Chebyshev solver, described in Algorithm~\ref{al::long}, while computing $\Phi_{\text{SOG}}^{\rm long}$. We decompose $\mathcal{E}_{\rm long}$ into three parts: 
\begin{equation}
\mathcal{E}_{\rm long}=\mathcal{E}_{\rm long}^{\text{Four}}+\mathcal{E}_{\rm long}^{\text{Cheb}}+\mathcal{E}_{\rm long}^{\text{FFT}},
\end{equation}
where 
\begin{itemize}
    \item $\mathcal{E}_{\rm long}^{\text{Four}}$ is due to the truncation of Fourier expansion in periodic dimensions.
    \item $\mathcal{E}_{\rm long}^{\text{Cheb}}$ is due to the truncation of Chebyshev expansion in free direction.
    \item $\mathcal{E}_{\rm long}^{\text{FFT}}$ is due to the possible involvement of FFTs for acceleration.
\end{itemize}

Let us analyze these parts individually. According to the range splitting and explicitly integrating along the z-axis, $\Phi_{\text{SOG}}^{\rm long}$ has a closed form expression:
\begin{equation}\label{eq::trun2}
\Phi_{\text{SOG}}^{\rm long}(\bm{r}_i)=\dfrac{\pi}{L_xL_y}\sum_{j=1}^{N}q_j\sum_{\ell=m+1}^{M}w_{\ell}s_{\ell}^2e^{-z_{ij}^2/s_{\ell}^2} \sum\limits_{\dot{\boldsymbol{k}}\in \mathcal{K}^{2}}e^{-s_{\ell}^2\dot{k}^2/4} e^{\mathrm{i}\dot{\bm{k}}\cdot\dot{\bm{r}}_{ij}},
\end{equation}
which is more convenient for the following error estimation. 

\begin{theorem}\label{thm::Fcheb}
Assume the Fourier summation in Eq.~\eqref{eq::trun2} is truncated at $|\dot{\bm{k}}|\leq K_{\max}^{\rm long}$. Under the DH theory, we have the estimate
\begin{equation}
\label{eq::M2Four}
\left\|\mathcal{E}_{\rm long}^{\emph{Four}}(\bm{r}_i)\right\|_{\infty}= O(w_{m+1}e^{-s_{m+1}^2(K_{\max}^{\rm long})^2}).
\end{equation}
Furthermore, if a $P$-terms truncated Chebyshev expansion is applied to the free direction, then the Chebyshev truncation error is bounded by
\begin{equation}
\label{eq::M2Cheb}
\left\|\mathcal{E}_{\rm long}^{\emph{Cheb}}(\bm{r}_i)\right\|_{\infty}= O(w_{m}(2\sqrt{2}\eta)^{-P}/\sqrt{P!}).
\end{equation}
\end{theorem}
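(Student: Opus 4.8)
The plan is to treat the two estimates separately, each paralleling an earlier result: the Fourier truncation bound \eqref{eq::M2Four} follows the template of Theorem~\ref{thm::trunFour}, while the Chebyshev truncation bound \eqref{eq::M2Cheb} is a direct application of Theorem~\ref{thm::Che}. In both cases the starting point is the closed form \eqref{eq::trun2}, in which the $z$-integration has already been carried out so that the lattice sum over $\dot{\bm k}$ and the Gaussian factor $e^{-z_{ij}^2/s_\ell^2}$ appear as separate factors; this separation is precisely what lets me isolate the two error sources and estimate them independently.

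For \eqref{eq::M2Four}, I would write $\mathcal{E}_{\rm long}^{\rm Four}(\bm r_i)$ as the tail of \eqref{eq::trun2} obtained by restricting the lattice sum to $|\dot{\bm k}|>K_{\max}^{\rm long}$. After interchanging the finite sum over $\ell$ with the lattice sum, I replace the two-dimensional sum over $\dot{\bm k}\in\mathcal{K}^2$ by the corresponding integral, just as in \eqref{eq::approx} but in the two periodic dimensions only. Passing to polar coordinates, the angular integral of $e^{\mathrm{i}\dot{\bm k}\cdot\dot{\bm r}_{ij}}$ produces a Bessel kernel $\sum_j q_j J_0(\dot k\,|\dot{\bm r}_{ij}|)$, which the DH theory (Appendix~\ref{app::DH}) bounds by $Cq_{\max}$ uniformly in $\dot k$, exactly as $\mathcal{G}$ is bounded in the proof of Theorem~\ref{thm::trunFour}. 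Using $e^{-z_{ij}^2/s_\ell^2}\le 1$ together with the elementary radial tail estimate $\int_{K_{\max}^{\rm long}}^\infty \dot k\,e^{-s_\ell^2\dot k^2/4}\,d\dot k = (2/s_\ell^2)\,e^{-s_\ell^2 (K_{\max}^{\rm long})^2/4}$, the factor $w_\ell s_\ell^2$ in \eqref{eq::trun2} cancels the $s_\ell^{-2}$, leaving $\sum_{\ell>m} w_\ell\,e^{-s_\ell^2(K_{\max}^{\rm long})^2/4}$. Since the bandwidths increase, $s_{m+1}$ is the smallest in the long-range group and dominates the tail, yielding \eqref{eq::M2Four} with the $O$-constant absorbing the factor in the exponent.

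For \eqref{eq::M2Cheb}, I would freeze $\dot{\bm k}$ and $j$ and view the object being expanded as the Gaussian $e^{-(z-z_j)^2/s_\ell^2}$ in the target variable $z\in[-L_z/2,L_z/2]$. Because $\ell>m$ forces $s_\ell>\eta L_z$, Theorem~\ref{thm::Che} applies and gives the pointwise remainder bound $1/(\sqrt{P!}\,(2\sqrt2\,\eta)^P)$; crucially, the derivative estimate \eqref{eq::2.38} used there is translation invariant, so the off-center shift by $z_j$ does not degrade it. I then pull this uniform bound out of the lattice sum and the sum over sources, so that what remains is the untruncated long-range sum $\sum_j q_j\sum_{\dot{\bm k}} e^{-s_\ell^2\dot k^2/4}e^{\mathrm{i}\dot{\bm k}\cdot\dot{\bm r}_{ij}}$. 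Bounding the structure factor by $Cq_{\max}$ via DH and converting the convergent $\dot{\bm k}$-sum to an integral $\sim L_xL_y/(\pi s_\ell^2)$ cancels $w_\ell s_\ell^2$ against the prefactor $\pi/(L_xL_y)$, leaving $\sum_{\ell>m} w_\ell\sim O(w_{m+1})=O(w_m)$. Collecting factors gives \eqref{eq::M2Cheb}.

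The routine parts are the Gaussian tail integrals and the geometric summation of the weights; the step that requires care is the justification that the bound of Theorem~\ref{thm::Che} applies uniformly after the shift $z\mapsto z-z_j$ and can be factored out of both the lattice and particle sums. This is legitimate precisely because \eqref{eq::2.38} controls $\|f^{(P)}\|_\infty$ independently of the center of the Gaussian, and because the DH estimate controls the remaining structure factor uniformly in $\dot{\bm k}$. Once these two uniformities are in place, the double sum collapses to a bounded weighted sum and the two error contributions decouple cleanly.
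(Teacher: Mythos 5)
Your proposal is correct and follows essentially the same route as the paper's proof: both estimates rest on the DH bound for the structure factor uniformly in $\dot{\bm{k}}$, replacement of the lattice sum by an integral, the Gaussian radial tail estimate for \eqref{eq::M2Four}, and Theorem~\ref{thm::Che} combined with the geometric decay of the weights $w_\ell$ for \eqref{eq::M2Cheb}. The only differences are cosmetic---you carry out the angular integration first (producing a $J_0$ structure factor, the 2D analogue of the sinc kernel in Theorem~\ref{thm::trunFour}) where the paper bounds the full structure factor including the $e^{-z_{ij}^2/s_\ell^2}$ factor directly via Eq.~\eqref{eq::boundDH}, and you make explicit the translation invariance of the derivative bound \eqref{eq::2.38} under the shift $z\mapsto z-z_j$, which the paper uses implicitly.
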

\begin{proof}
By the DH theory outlined in Appendix~\ref{app::DH}, we have
\begin{equation}\label{eq::boundDH}
\left\|\sum_{j=1}^{N}q_je^{\mathrm{i}\dot{\bm{k}}\cdot\dot{\bm{r}}_{ij}}e^{-z_{ij}^2/s_{\ell}^2}\right\|_{\infty}\leq C q_{\max} 
\end{equation}
where $C$ is a constant. Moreover, if the factor $e^{-z_{ij}^2/s_{\ell}^2}$ is removed from Eq.~\eqref{eq::boundDH}, the inequality still holds. Substituting Eq.~\eqref{eq::boundDH} into Eq.~\eqref{eq::trun2} and replacing the Fourier sum by an integral yield that
\begin{equation}
\begin{split}
\left\|\mathcal{E}_{\rm long}^{\text{Four}}(\bm{r}_i)\right\|_{\infty}& \leq \frac{1}{2}\sum_{\ell=m+1}^{M}w_{\ell}s_{\ell}^2\int_{\widetilde{K}_{\max}}^{\infty}\dot{k}e^{-s_{\ell}^2\dot{k}^2/4}\left\|\sum_{j=1}^{N}q_{j}e^{i\dot{\bm{k}}\cdot\dot{\bm{r}}_{ij}}e^{-z_{ij}^2/s_{\ell}^2}\right\|_{\infty}d\dot{k}\\
& \leq \frac{Cq_{\max}}{4}\sum_{\ell=m+1}^{M}w_{\ell} e^{-s_{\ell}^2(K_{\max}^{\rm long})^2}\\
&= O(w_{m+1}e^{-s_{m+1}^2(K_{\max}^{\rm long})^2}).
\end{split}
\end{equation}
Recall that $s_{\ell}\leq \eta L_z$ for all $\ell\geq m+1$. Once a $P$-terms Chebyshev approximation is applied in the $z$-direction, the Chebyshev truncation error can be bounded by
\begin{equation}\label{eq::echeby}
\begin{split}
\left\|\mathcal{E}_{\rm long}^{\text{Cheb}}(\bm{r}_i)\right\|_{\infty}&\leq \frac{C\pi q_{\max}}{L_xL_y}\sum_{\ell=m+1}^{M}w_{\ell} s_{\ell} \sum_{\dot{\bm{k}}\in\mathcal{K}^2}e^{-s_{\ell}^2 \dot{k}^2}\left\|e^{-z_{ij}^2/s_{\ell}^2}-\sum_{n=0}^{P-1}{}^{\prime}a_{n}T_{n}(z_{ij})\right\|_{\infty}\\
&\leq \frac{C\pi q_{\max}}{\sqrt{P!}(2\sqrt{2}\eta)^PL_xL_y}\sum_{\ell=m+1}^{M}w_{\ell}s_{\ell}^2\sum_{\dot{\bm{k}}\in\mathcal{K}^2}e^{-s_{\ell}^2\dot{k}^2}\\
&\leq \frac{Cq_{\max}}{4\sqrt{P!}(2\sqrt{2}\eta)^P}\sum_{\ell=m+1}^{M}w_{\ell},
\end{split}
\end{equation}
where the first and second inequalities are derived by using Eq.~\eqref{eq::boundDH} and Theorem~\ref{thm::Che}, respectively, and the last formula is obtained by replacing the sum over Fourier modes with an integral. From Eq.~\eqref{eq::echeby}, we arrive at $\left\|\mathcal{E}_{\rm long}^{\text{Cheb}}(\bm{r}_i)\right\|_{\infty}= O(w_{m}(2\sqrt{2}\eta)^{-P}/\sqrt{P!})$.
\end{proof}

It is worth noting that the solver does not rely on the FFT to achieve its asymptotic complexity for systems without strong confinement, so that the mesh discretization error $\mathcal{E}_{\rm long}^{\text{FFT}}$ is not always present. 
Once the FFT is utilized,  $\mathcal{E}_{\rm long}^{\text{FFT}}$ is involved, and the estimation is similar to what we have done for $\mathcal{E}_{\rm mid}^{\text{scal}} + \mathcal{E}_{\rm mid}^{\text{gath}}$, i.e.
\begin{equation}\label{eq::Em2FFT}
\left\|\mathcal{E}_{\rm long}^{\text{FFT}}(\bm{r}_i)\right\|_{\infty}= O\left(\max_{d\in\{x,y\}}\left\{\sum_{\ell=m+1}^{M}w_{\ell}e^{-\frac{\pi^2\mu_{d}^{\ell}\left(\mathcal{P}_d^{\rm long}-1\right)^2}{4s_{\ell}^2\mathcal{S}^{\rm long}}}\right\}
+\erfc\left(\sqrt{\mathcal{S}^{\rm long}}\right)\right),
\end{equation}
where $\mathcal{S}^{\rm long}$ is the shape parameter and $\mathcal{P}_d^{\rm long}$ represents the number of grid points within the support of the window function in the Fourier-Chebyshev solver.
\subsection{Parameter selection}\label{subsec::parasele}
Several parameters are involved in the proposed spectral SOG solver. Assume  the number of particles $N$, the dimensions of the system $L_x$, $L_y$, $L_z$, and the error tolerance $\varepsilon$ are given, we outline a systematic approach for selecting these parameters.


First, the cutoff $r_c$ is selected to optimize the runtime for computing the first and third terms of Eq.~\eqref{eq::comp}, with a rough estimate of $r_c\sim \rho_r^{-1/3}$. Second, the parameters associated with the SOG decomposition, including $b$, $\sigma$, $\omega$ and $M$, are determined by following the discussions at the end of Section~\ref{subsec::truncateerror}. Third, we determine the maximum frequency of mid-range component
\begin{equation}
K_{\max}^{\text{mid}}\sim \sqrt{\log(1/\varepsilon)}/s_0
\end{equation} 
using the estimates Eqs.~\eqref{eq::M1trun} and \eqref{eq::M2Four} for the mid-range and long-range solvers, respectively. For the solver of mid-range part, this also sets the number of grids as
\begin{equation}
I_{d}=2\left\lceil\frac{K_{\text{max}}L_d}{2\pi}\right\rceil,\quad d\in\{x,y,z\},
\end{equation}
as well as the mesh size $h_d=L_d/I_d$ according to Theorem~\ref{thm::trunFour}. 

After both the type of window function and the domain extension factor $\delta_z$ are selected, we determine the shape parameter and the grid points within the window's support using corresponding error estimates. For instance, as per Theorems~\ref{thm::scal} and \ref{thm::pad}, achieving precision with the Gaussian window requires $\mathcal{S}\sim \text{erfcinv}(\varepsilon)^2$, where $\text{erfcinv}(\cdot)$ represents the inverse complementary error function, easily computed using scientific computing software. 
The number of grid points within the support of window, $\mathcal{P}_d$, can also be obtained through the matching Eq.~\eqref{eq::S}. The half-support of the window is then computed as $H_d=(\mathcal{P}_d-1)h_d/2$. Subsequently, we approximate the window with its polynomial expansions within the support as mentioned in Remark \ref{rmk::PWindow}, where the polynomial degree is chosen adaptively.

The remaining parameters only appear in the calculation of free direction, including the zero-padding ratio $\lambda_z$, the range splitting parameter $\eta$, and the terms of the truncated Chebyshev expansion $P$. These parameters are coupled, and their determination involves solving an optimization problem according to Theorems~\ref{thm::scal}, \ref{thm::pad} and \ref{thm::Fcheb}:
\begin{equation}\label{eq::optimiz}
    \begin{split}
        \min &  \left(\dfrac{\lambda_z(1+\delta_z/L_z)}{r_c^3\rho_r}N\log N+\dfrac{C_{\text{rat}}PL_xL_y}{\eta^2L_z^2}N\right),\\
        \text{s.t.} &~\erfc\left[\dfrac{\lambda_z(L_z+\delta_z)}{2\sqrt{\eta^2L_z^2-H_z^2/\mathcal{S}}}\right]\leq \varepsilon,~\dfrac{(2\sqrt{2}\eta)^{-P}}{\eta L_z\sqrt{P!}}\leq \varepsilon. \\
    \end{split}
\end{equation}
where the total complexity is minimized according to Eq.~\eqref{eq::comp}. The factor $C_{\text{rat}}$ quantifies the cost ratio between evaluating a complex-exponential basis and a Chebyshev basis, with the latter being significantly more efficient. Typically, $C_{\text{rat}}$ ranges from $20$ to $100$ and highly depends on the specific hardware architecture and implementation details~\cite{Trefethen2019App,HOSSEINY202018}. After Eq.~\eqref{eq::optimiz} is solved for $\lambda_z$, $\eta$, and $P$, the maximum frequencies, the number of grids, and box size along the free direction are extended to size $K_{\max}^{\rm long}\sim \sqrt{\log(1/\varepsilon)}/s_{m+1}$, $I_z^*=\lceil\lambda_z(L_z+\delta_{z})/h_z\rceil$ and $L_z^*=h_zI_z^*$, respectively.

Finally, when utilizing the FFTs to accelerate the solver for the long-range component, the number of grids in the periodic dimensions is determined by
\begin{equation}
I_d^{\rm long}=2\left\lceil\frac{K_{\max}^{\rm long}L_d}{2\pi}\right\rceil,\quad d\in\{x,y\},
\end{equation}
and the mesh size reads $h_d^{\rm long}=L_d/I_d^{\rm long}$. The Chebyshev grids are set using Eq.~\eqref{eq::Chebyshevnode}. Parameters related to the window, $\mathcal{S}^{\rm long}$ and $\mathcal{P}_d^{\rm long}$, can be determined analogously to the approach used for $\mathcal{S}$ and $\mathcal{P}_d$, respectively.

\section{Numerical results}\label{sec::numer}
In this section, we provide several numerical examples to examine the accuracy and performance of our new spectral SOG method, including tests for (i) cubic systems and (ii) strongly confined systems with aspect ratio up to about $3000:1$. Accuracy is measured using the relative $L^\infty$ error in 
potential, defined by $\varepsilon_r=\|\Phi_\text{SOG}(\bm{r}_i)-\Phi_\text{true}(\bm{r}_i)\|_{\infty}/\|\Phi_\text{true}(\bm{r}_i)\|_{\infty}$, where the ``true'' solution is computed using the Ewald2D method~\cite{parry1975electrostatic} with machine precision. 
Our software~\cite{FastSpecSoG} is developed based on the Julia Programming Language~\cite{Julia-2017} and the open-source package CellListMap.jl~\cite{celllistmap}.
The calculations are conducted on a single core of a server equipped with an AMD Ryzen Threadripper PRO 3995WX CPU running at 2.2GHz and 256GB of RAM.


\subsection{Accuracy test in a cubic system}\label{sec::acc}
In Sections~\ref{subsec::analysis3DFF}-\ref{subsec::analysFFCT}, we study the error estimates for each component of the proposed spectral SOG solver. In this subsection, we conduct several tests to validate these theoretical results for a cubic box with a side length of 20, containing 1000 randomly distributed monovalent particles satisfying the charge neutrality condition Eq.~\eqref{eq::chargeneutral}. By default, the parameters from the last row of Table~\ref{tabl:parameter} are used for the SOG decomposition, with an error level of $10^{-14}$. All other parameters are selected sufficiently large according to Section~\ref{subsec::parasele}, ensuring their impact is negligible. 

\begin{figure}[htbp] 
    \centering
    \includegraphics[width=1\textwidth]{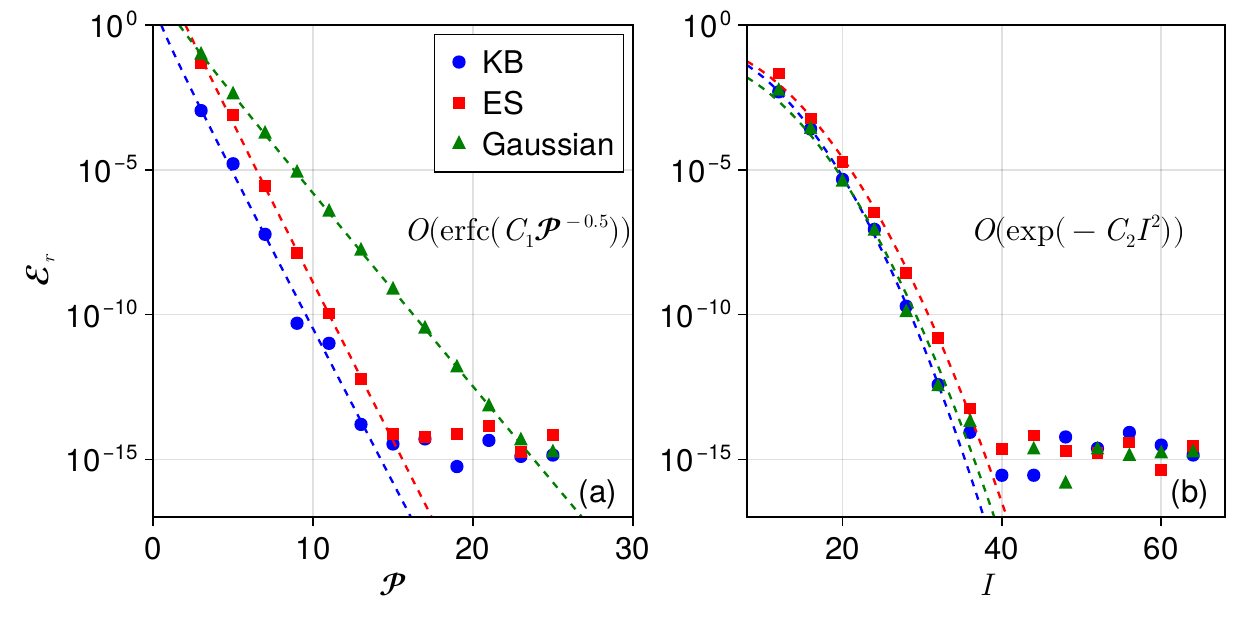}
    \caption{The relative error in evaluating $\Phi_{\text{SOG}}^{\rm mid}$ is plotted against (a) the window supports $\mathcal{P}$ and (b) the number of Fourier grids $I$ in each dimension. Data are shown for different type of window functions. The dashed lines in (a) and (b) represent the theoretical convergence rates provided by Theorems~\ref{thm::pad} (associated with Eq.~\eqref{eq::S}) and \ref{thm::trunFour}, respectively. The range splitting factor is fixed as $\eta=0.294$, which corresponds to $m=8$.}
    \label{fig:window}
\end{figure}

In our first numerical test, we compare the accuracy of Gaussian, KB, and ES window functions used in the gridding and gathering steps of Algorithm~\ref{al::mid}. The window supports and number of FFT grids are set to be the same along each dimension, i.e. $\mathcal{P}_x=\mathcal{P}_y=\mathcal{P}_z=\mathcal{P}$ and $I_x=I_y=I_z=I$. We use polynomial interpolation of order $\nu=10$ for all windows, as outlined in Remark~\ref{rmk::PWindow}. For the shape parameters, we follow suggestions from~\cite{shamshirgar2021fast}, setting $\mathcal{S}=0.455\pi\mathcal{P}$ (very close to the estimate Eq.~\eqref{eq::S}) for the Gaussian and 
$\beta=2.5\mathcal{P}$ for both the KB and ES. Figure~\ref{fig:window}(a) shows the relative error plotted as a function of the window support $\mathcal{P}$. The results indicate that the error decreases with a rate of 
$O(\erfc(C_1\sqrt{\mathcal{P}}))$, which aligns with our theoretical analysis for $\mathcal{E}_{\rm mid}^{\text{scal}}$ and $\mathcal{E}_{\rm mid}^{\text{gath}}$. The fitted values for the prefactors are around $C_1\approx 1.563$ for the Gaussian and 
$C_1\approx 1.226$ for both the KB and ES. Also, machine precision is achieved with $\mathcal{P}=24$ for the Gaussian and $\mathcal{P}=14$ for both the KB and ES. This suggests that the KB and ES can use a support size $1.7$ times smaller than the Gaussian in each dimension. While the decay rate remains similar, it is observed that ES requires $1-2$ more interpolation points than KB to achieve the same accuracy. This is partly because ES exhibits discontinuity near the truncation point, which reduces the accuracy of polynomial approximation. 

In Figure~\ref{fig:window}(b), we depict the error as a function of $I$, the number of FFT grids along each dimension, while maintaining the window support $\mathcal{P}=24$. An $O(e^{-C_2I^2})$ convergence rate is clearly illustrated, with the prefactor $C_2\approx 0.0244$ for all these windows. This demonstrates that the error introduced by the number of Fourier grids is insensitive to the choice of window function, a point also supported by our error estimate. Increasing the number of grid points primarily affects the truncation point of the Fourier mode, \( K_{\max}^{\mathrm{mid}} \sim \sqrt{2} \pi I / L \), as described in Theorem~\ref{thm::trunFour}, resulting in a Gaussian decay rate with respect to \( I \).

\begin{figure}[!ht] 
    \centering
    \includegraphics[width=1\textwidth]{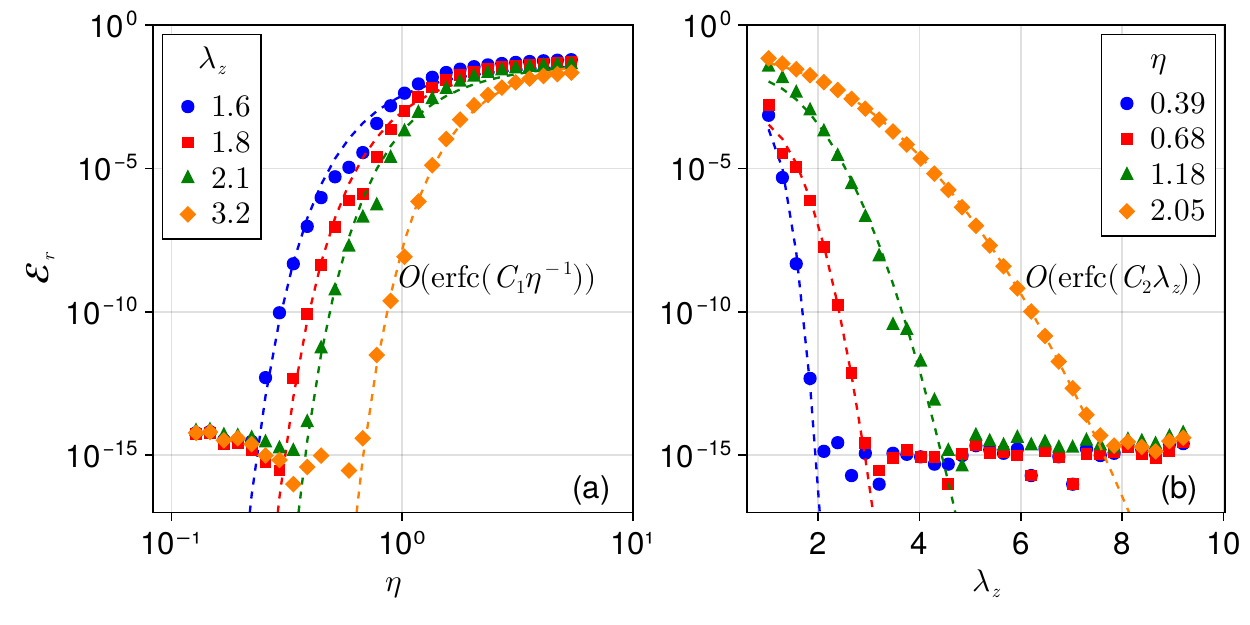}
    \caption{The relative error in evaluating $\Phi_{\text{SOG}}^{\rm mid}$ as a function of (a) the range splitting factor $\eta$ and (b) the zero-padding factor $\lambda_z$. The dashed line represents the theoretical convergence rates presented in Theorem~\ref{thm::pad}. In (b), the range splitting factor $\eta\approx0.39$, $0.68$, $1.18$, and $2.05$ corresponds to $\rm mid=10$, $14$, $18$, and $22$, respectively.}
    \label{fig:pad}
\end{figure}

In the forthcoming tests, we use a KB window with support $P=16$ and polynomial approximation order of $\nu=10$, while maintaining a fixed number of Fourier grids at $I=64$. In Figure~\ref{fig:pad}(a-b), we plot the error in evaluating $\Phi_{\text{SOG}}^{\rm mid}$ as a function of the range splitting factor $\eta$ and zero padding factor $\lambda_z$, respectively. According to Theorem~\ref{thm::pad}, these two parameters primarily affect the padding error $\mathcal{E}_{\rm mid}^{\text{padd}}$, expected to be scaled as $O(\erfc(C\lambda_z/\eta))$ under the condition $s_{m}\sim \eta L_z$. It is evident from both (a) and (b) that the error exhibits a rapid variation, scaling with rates of $\erfc(C_1\eta^{-1})$ and $\erfc(C_2\lambda_z)$, respectively, in good accordance with  Theorem~\ref{thm::pad}. Moreover, the fitting prefactors are around $C_1\approx1.27$, $1.68$, $2.06$ and $3.71$ for $\lambda_z\approx 1.3$, $1.5$, $1.7$, and $2.3$; and $C_2\approx 4.13$, $2.61$, $1.88$, and $1.13$ for $\eta=0.39$, $0.68$, $1.18$, and $2.05$ respectively. Thus, we have $C_1\sim O(\lambda_z)$ and $C_2\sim O(\eta^{-1})$ as observed. In practice, one can adjust the ratio between $\lambda_z$ and $\eta$ to ensure that $\mathcal{E}_{\rm mid}^{\text{padd}}$ achieves the desired accuracy. 

\begin{figure}[!ht] 
    \centering
    \includegraphics[width=1\textwidth]{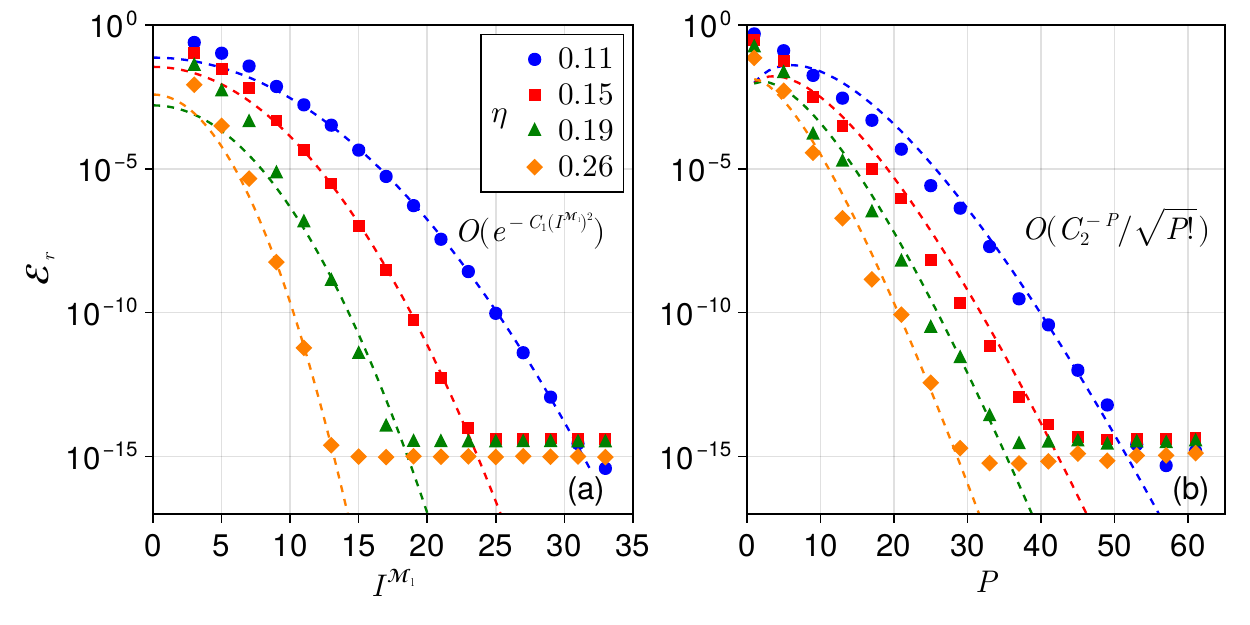}
    \caption{The relative error in evaluating $\Phi_{\text{SOG}}^{\rm long}$ as a function of (a) the number of Fourier modes $I^{\rm long}$ and (b) the number of Chebyshev basis $P$. Data are shown for different range splitting parameter $\eta$, where $\eta\approx0.11$, $0.15$, $0.19$, and $0.26$ correspond to $\rm mid=1$, $3$, $5$, and $7$, respectively. The dashed lines represent the theoretical convergence rate provided in Theorem~\ref{thm::Fcheb}.}
    \label{fig:long}
\end{figure}

Next, we examine the error when using the Fourier-Chebyshev solver to calculate $\Phi_{\text{SOG}}^{\rm long}$. In this test, we select its non-FFT version, Algorithm~\ref{al::long_nonfft}, as it is more efficient when $L_z\sim\sqrt{L_xL_y}$. We focus on two aspects: the number of Fourier modes ($I_x^{\rm long}=I_y^{\rm long}=I^{\rm long}$) required in the periodic directions, and the degree $P$ of the Chebyshev series needed in the free direction. Figures~\ref{fig:long}(a-b) illustrate how the error varies with these parameters, following rates of $O(e^{-C_1 (I^{\rm long})^2})$ and $O(C_2^{P}/\sqrt{P!})$, respectively, which are consistent with the estimates from Theorem~\ref{thm::Fcheb} for $\mathcal{E}_{\rm long}^{\text{Four}}$ and $\mathcal{E}_{\rm long}^{\text{Cheb}}$. As $\eta$ increases, the long-range potential $\Phi_{\text{SOG}}^{\rm long}$ becomes smoother in $[0,L_z]$, while its Fourier transform narrows in $\mathcal{K}^2$, requiring fewer Fourier modes and Chebyshev bases. This trend is quantified by the fitting prefactors, where $C_1\approx0.032$, $0.056$, $0.082$, $0.167$ and $C_2\approx0.39$, $0.49$, $0.61$, $0.84$ correspond to $\eta\approx0.11$, $0.15$, $0.19$, $0.26$, respectively. Consequently, compared to Figure~\ref{fig:window}(b), $I^{\rm long}$ is much smaller than the number of grids required for evaluating $\Phi_{\text{SOG}}^{\rm mid}$. 


\subsection{Accuracy test in a strongly confined system}\label{sec::strongconf}

\begin{figure}[tb] 
\centering
\includegraphics[width=1\textwidth]{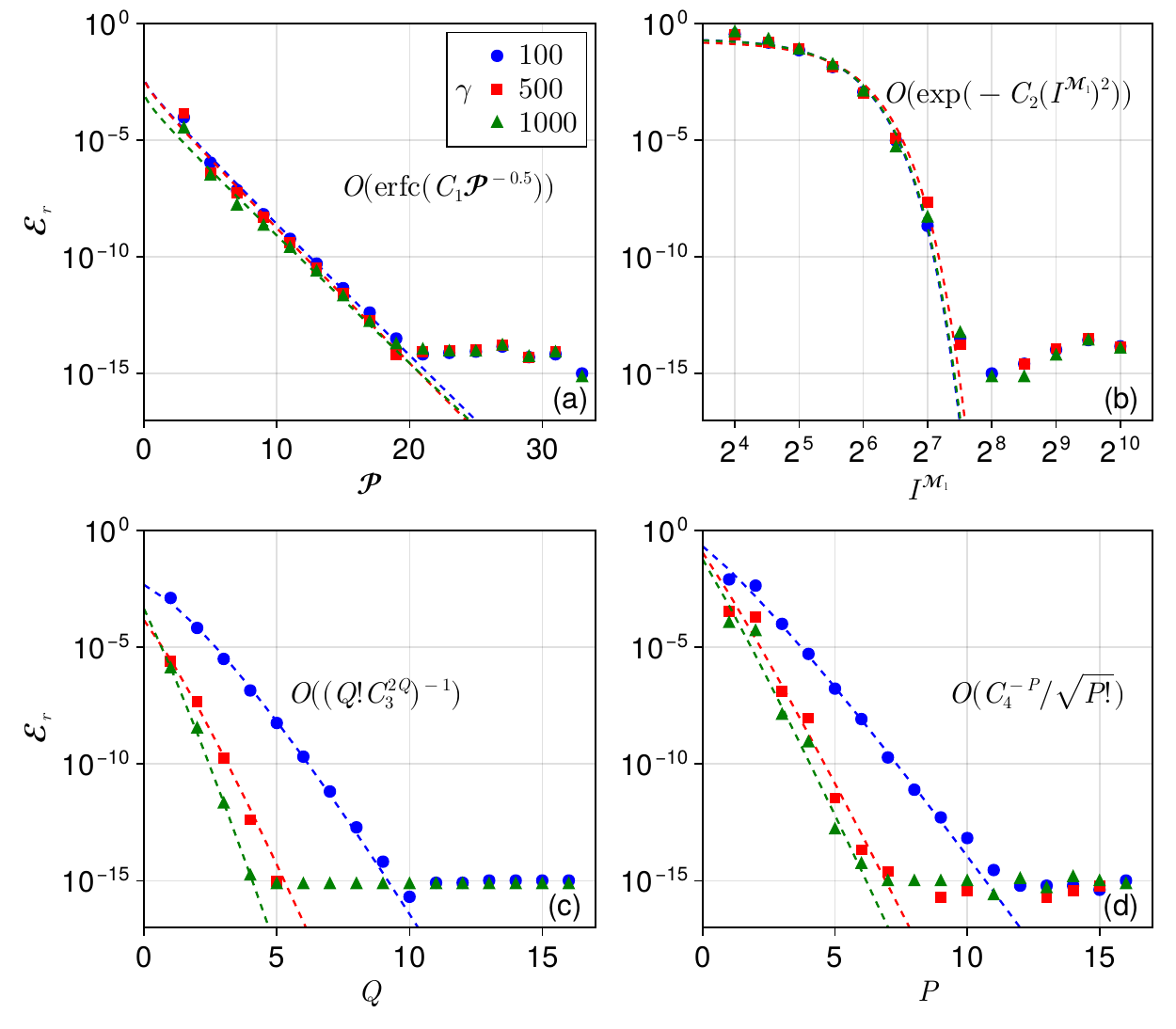}
\caption{The relative error in computing~$\Phi_{\text{SOG}}^{\rm long}$ against (a) window supports $\mathcal{P}$, (b) the number of Fourier grids $I^{\rm long}$ per dimension, (c) the number of Taylor expansion terms $Q$, and (d) the number of Chebyshev bases $P$. Data are shown for various aspect ratios $\gamma$. The dashed lines represent the theoretical convergence rate provided in this work.}
\label{fig:thin}
\end{figure}

Accurately handling strongly confined systems poses a significant challenge in quasi-2D electrostatic summations. Our proposed spectral SOG method provides an effective solution to this problem. To demonstrate its capability, we set \(L_x = L_y = 100\) and vary \(\gamma = L_x/L_z\) from 100 to 1000, creating extremely thin boxes containing 1000 randomly distributed monovalent particles. We choose the range-splitting factor \(\eta = 0.294\), ensuring that all Gaussians in the SOG decomposition satisfy the condition \(s_{\ell} > \eta L_z\), which eliminates the need for the mid-range solver, i.e., \(\Phi^{\rm mid}_{\text{SOG}}(\bm{r}_i) \equiv 0\). Furthermore, based on the complexity analysis in Section~\ref{subsec::complexity}, the long-range solver requires \(O(N)\) Fourier modes per particle. To achieve an \(O(N \log N)\) complexity, we employ the FFT-accelerated version, Algorithm~\ref{al::long}. Additionally, we utilize the Taylor expansion-based technique described in Appendix~\ref{subsec::Gridding} to accelerate the computation of Fourier-Chebyshev coefficients.

Figure~\ref{fig:thin} shows how the error in evaluating $\Phi_{\text{SOG}}^{\rm long}$ varies with different parameters as $\gamma$ varies. We use the KB window with polynomial approximation order $\nu=8$ and a shape parameter $\beta=7.5\mathcal{P}$. Note that this $\beta$ value is larger than that used for cubic systems and is determined through numerical experiments to ensure a near-optimal convergence rate of error with $\mathcal{P}$. Other parameters are selected according to Section~\ref{subsec::parasele}. Figure~\ref{fig:thin}(a-b) shows that the error scales with rates of $\erfc(C_1\sqrt{\mathcal{P}})$ and $e^{-C_2(I^{\rm long})^2}$, respectively. The prefactors $C_1\approx 1.12$ and $C_2\approx 1.101\times 10^{-3}$ remain constant regardless of the aspect ratio $\gamma$, aligning well with our estimate Eq.~\eqref{eq::Em2FFT} for $\mathcal{E}_{\rm long}^{\text{FFT}}$. 

In Figure~\ref{fig:thin}(c-d), it is shown that the error scales as $(C_3^{2Q}Q!)^{-1}$ and $C_4^{-P}/\sqrt{P!}$ with the number of Taylor terms $Q$ and the Chebyshev degree $P$, respectively, consistent with Lemma~\ref{lemma::Taylor} and Theorem~\ref{thm::Fcheb}. Moreover, the prefactors $C_3$ and $C_4$ are significantly influenced by the aspect ratio $\gamma$, with values approximately $2.38$, $6.90$, and $17.90$ for $C_3$, and $10.01$, $59.01$, and $96.40$ for $C_4$ when $\gamma=100$, $500$, and $1000$, respectively. These findings indicate that as $\gamma$ increases, the cost of our SOG spectral solver \emph{decreases gradually}. Remarkably, machine precision can be attained with $Q=P=6$ when $\gamma>1000$. Since existing algorithms generally incur higher costs as $\gamma$ increases to accommodate stronger anisotropy~\cite{yeh1999ewald,lindbo2012fast}, this underscores the advantage of our spectral SOG method in effectively addressing strongly confined quasi-2D systems.

\subsection{Scaling to large-scale systems}\label{sec::total}

\begin{figure}[!ht] 
    \centering
\includegraphics[width=1\textwidth]{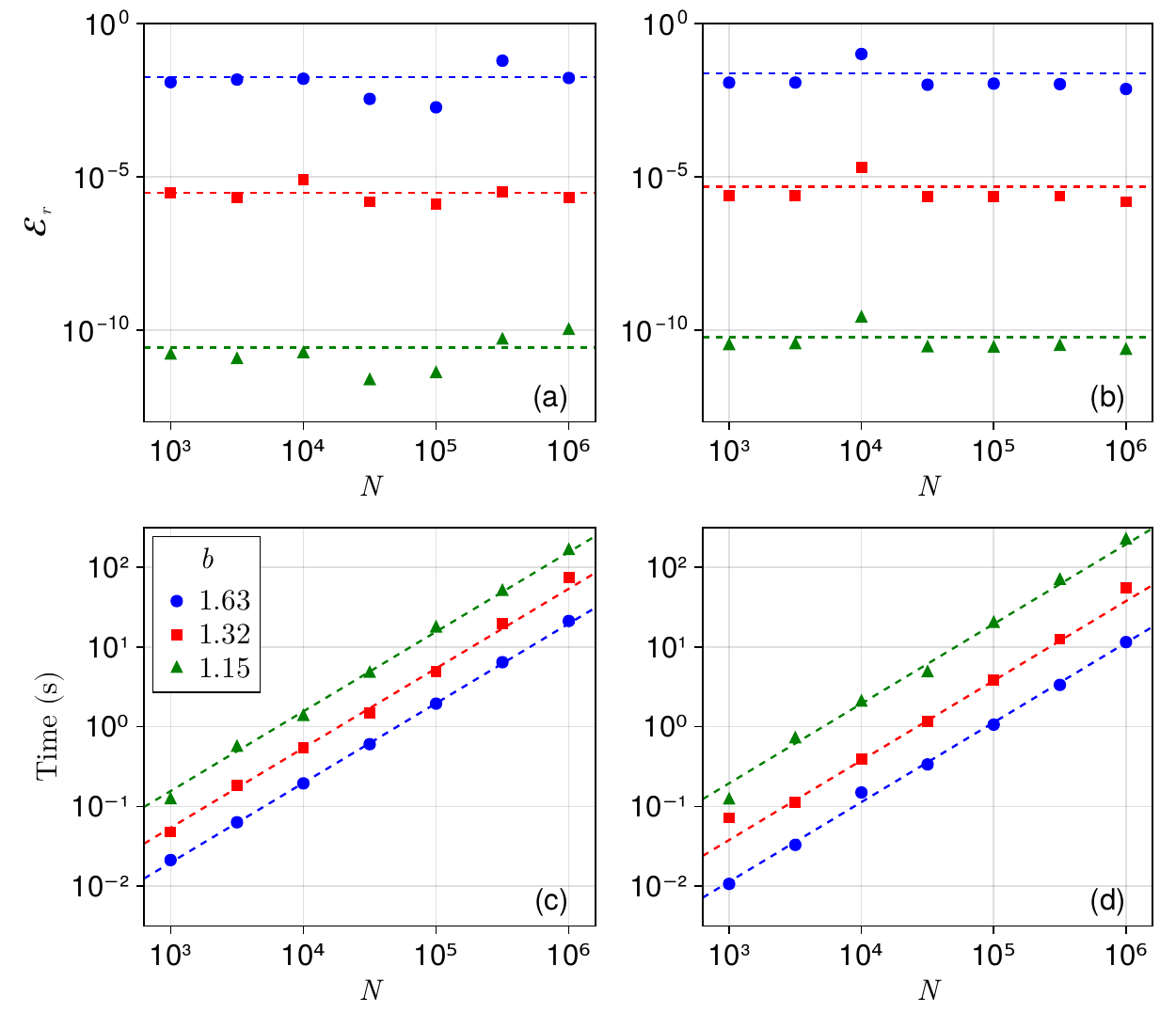}
    \caption{Total relative error is shown for (a) cubic systems with a fixed bulk density $N/V=0.125$ and (b) strongly confined systems with fixed surface density $N/(L_xL_y)=1.1$ and height $L_z = 0.3$, plotted against the number of particles $N$. The data represent different values of $b$ as detailed in Table~\ref{tabl:parameter}, where $b=1.63$, $1.32$, and $1.15$ correspond to precision levels of approximately $10^{-4}$, $10^{-6}$, and $10^{-14}$, respectively. In (c) and (d), we display the corresponding total CPU time for the scenarios in (a) and (b), respectively. The dashed lines in (a-b) and (c-d) indicate the average over seven data points and linear fitting, respectively.
    }
    \label{fig:total}
\end{figure}

In this section, we study the total error and CPU time performance for evaluating $\Phi(\bm{r}_i)$ with our proposed SOG spectral method. We explore two types of systems: cubic shapes with $\gamma=1$ and strongly confined cases with $\gamma\gg 1$. We vary $N$ from $10^3$ to $10^6$, maintaining a constant bulk density for cubic systems and surface density (i.e. $N/(L_xL_y)$) for strongly confined cases. Consequently, for the strongly confined cases, $\gamma$ ranges from $10^2$ to $10^{3.5}$. Our parameter selection process involves 1) determining suitable parameters for $N=10^3$ as explained in Section~\ref{subsec::parasele}; 2) scaling these parameters for larger systems while keeping the FFT mesh size $h_d$, Chebyshev degree $P$, range splitting factor $\eta$, and zero-padding factor $\lambda_z$ constant. The parameters we used for $N=10^3$ cases are provided in Table~\ref{tabl:CubicTime2}, Appendix \ref{app::ParaSelection}. 

\renewcommand\arraystretch{1.11}
\begin{table}[!ht]
    \caption{CPU time performance of the spectral SOG method for cubic systems for cubic systems of varying sizes and required precisions $\varepsilon_r$. In the table, $t_{\text{total}}$ is the total computational time; $t_{\text{short}}$, $t_{\text{mid}}$ and $t_{\text{long}}$ are the time required by the near-field, mid-range, and long-range component of the solver, respectively; $\zeta$ represents the ratio $t_{\text{long}}/t_{\text{total}}$.}
    \centering
    \begin{tabular}{ccccccc}
    \\
        \toprule
        $\varepsilon_r$ & $N$ & $t_\text{total}(s)$ & $t_\text{short}(s)$ & $t_\text{mid}(s)$ & $t_\text{long}(s)$ &  $\zeta(\%)$ \\
        \midrule
        $10^{-3}$ & $10^{3}$ & $2.12\times 10^{-2}$ & $6.95\times 10^{-3}$ & $1.40\times 10^{-2}$ & $2.49\times 10^{-4}$ & $1.17$\\
        $10^{-3}$ & $10^{4}$ & $1.95\times 10^{-1}$ & $8.13\times 10^{-2}$ & $1.11\times 10^{-1}$ & $2.57\times 10^{-3}$ & $1.32$\\
        $10^{-3}$ & $10^{5}$ & $1.95\times 10^{0}$ & $8.83\times 10^{-1}$ & $1.04\times 10^{0}$ & $2.60\times 10^{-2}$ & $1.34$\\
        $10^{-3}$ & $10^{6}$ & $2.13\times 10^{1}$ & $9.90\times 10^{0}$ & $1.11\times 10^{1}$ & $2.64\times 10^{-1}$ & $1.24$\\\\
        $10^{-6}$ & $10^{3}$ & $4.77\times 10^{-2}$ & $1.08\times 10^{-2}$ & $3.46\times 10^{-2}$ & $2.27\times 10^{-3}$ & $4.77$\\
        $10^{-6}$ & $10^{4}$ & $5.47\times 10^{-1}$ & $1.26\times 10^{-1}$ & $3.98\times 10^{-1}$ & $2.30\times 10^{-2}$ & $4.20$\\
        $10^{-6}$ & $10^{5}$ & $4.95\times 10^{0}$ & $1.36\times 10^{0}$ & $3.35\times 10^{0}$ & $2.48\times 10^{-1}$ & $5.01$\\
        $10^{-6}$ & $10^{6}$ & $7.55\times 10^{1}$ & $1.46\times 10^{1}$ & $5.86\times 10^{1}$ & $2.31\times 10^{0}$ & $3.05$\\\\
        $10^{-12}$ & $10^{3}$ & $1.20\times 10^{-1}$ & $1.74\times 10^{-2}$ & $9.35\times 10^{-2}$ & $9.41\times 10^{-3}$ & $7.82$\\
        $10^{-12}$ & $10^{4}$ & $1.34\times 10^{0}$ & $1.98\times 10^{-1}$ & $1.04\times 10^{0}$ & $9.89\times 10^{-2}$ & $7.40$\\
        $10^{-12}$ & $10^{5}$ & $1.72\times 10^{1}$ & $2.11\times 10^{0}$ & $1.41\times 10^{1}$ & $9.88\times 10^{-1}$ & $5.74$\\
        $10^{-12}$ & $10^{6}$ & $1.61\times 10^{2}$ & $2.25\times 10^{1}$ & $1.29\times 10^{2}$ & $9.70\times 10^{0}$ & $6.01$\\
        \bottomrule
    \end{tabular}
	\label{tabl:CubicTime}
\end{table}

\renewcommand\arraystretch{1.11}
\begin{table}[!ht]
	\caption{CPU time performance of the spectral SOG method for strongly confined systems with different system sizes, aspect ratio $\gamma$, and required precision $\varepsilon_r$. The explanations for other parameters are consistent with Table~\ref{tabl:CubicTime}.
 }
	\centering
	\begin{tabular}{cccccc}
        \\
        \toprule
        $\varepsilon_r$ & $N$ & $\gamma$ & $t_\text{total}(s)$ & $t_\text{short}(s)$ & $t_\text{long}(s)$  \\
        \midrule
            $10^{-3}$ & $10^{3}$ & $100$ & $1.06\times 10^{-2}$ & $5.21\times 10^{-3}$ & $5.43\times 10^{-3}$\\
            $10^{-3}$ & $10^{4}$ & $100\sqrt{10}$ & $1.50\times 10^{-1}$ & $5.46\times 10^{-2}$ & $9.49\times 10^{-2}$\\
            $10^{-3}$ & $10^{5}$ & $1000$ & $1.06\times 10^{0}$ & $5.61\times 10^{-1}$ & $5.02\times 10^{-1}$\\
            $10^{-3}$ & $10^{6}$ & $1000\sqrt{10}$ & $1.15\times 10^{1}$ & $5.97\times 10^{0}$ & $5.57\times 10^{0}$\\
            \\
            $10^{-6}$ & $10^{3}$ & $100$ & $7.17\times 10^{-2}$ & $8.92\times 10^{-3}$ & $6.28\times 10^{-2}$\\
            $10^{-6}$ & $10^{4}$ & $100\sqrt{10}$ & $3.89\times 10^{-1}$ & $9.37\times 10^{-2}$ & $2.96\times 10^{-1}$\\
            $10^{-6}$ & $10^{5}$ & $1000$ & $3.84\times 10^{0}$ & $9.59\times 10^{-1}$ & $2.88\times 10^{0}$\\
            $10^{-6}$ & $10^{6}$ & $1000\sqrt{10}$ & $5.50\times 10^{1}$ & $9.89\times 10^{0}$ & $4.52\times 10^{1}$\\
            \\
            $10^{-12}$ & $10^{3}$ & $100$ & $1.19\times 10^{-1}$ & $1.42\times 10^{-2}$ & $1.05\times 10^{-1}$\\
            $10^{-12}$ & $10^{4}$ & $100\sqrt{10}$ & $2.02\times 10^{0}$ & $1.46\times 10^{-1}$ & $1.87\times 10^{0}$\\
            $10^{-12}$ & $10^{5}$ & $1000$ & $1.97\times 10^{1}$ & $1.48\times 10^{0}$ & $1.82\times 10^{1}$\\
            $10^{-12}$ & $10^{6}$ & $1000\sqrt{10}$ & $2.19\times 10^{2}$ & $1.53\times 10^{1}$ & $2.04\times 10^{2}$\\
        \bottomrule
        \end{tabular}
	\label{tabl:SConfinedTime}
\end{table}
 
Figures~\ref{fig:total}(a-b) show the total error as a function of $N$ for cubic and strongly confined systems, respectively, with variations in the decomposition parameter $b$ and precision level. The results indicate that the error remains stable as $N$ increases, suggesting that the parameters investigated in Sections~\ref{sec::acc} and~\ref{sec::strongconf} can be readily scaled to larger-scale systems. Figures~\ref{fig:total}(c-d) show the total runtime for $N$ ranging from $10^3$ to $10^6$, using the system settings from (a-b). These results confirm the nearly linear scaling of our spectral SOG method, even for $\gamma>1000$. The expected $O(N \log N)$ scaling due to the FFT is not observed in our tests, as the main costs are associated with the gridding and gathering steps, which scales as $O(N)$. 

Our spectral SOG method demonstrates superior efficiency for strongly confined systems. Tables~\ref{tabl:CubicTime} and \ref{tabl:SConfinedTime} present detailed CPU time results for each component of the solver while generating Figures~\ref{fig:total}(c-d). Comparing with the cubic case, the strongly confined case takes about half the total CPU time at $\varepsilon_r=10^{-3}$, slightly less at $10^{-6}$, and no more than twice the time at $10^{-12}$. This difference is due to the increasing proportion of time allocated to the long-range part, characterized by $\zeta$, as precision requirements rise. However, $\zeta$ remains below $10\%$ even at $10^{-12}$ precision, where only $8$ Chebyshev polynomials are required in the $z$-direction. 
Since Ewald-based spectral methods require $O(\gamma)$ zero-padding and thereby suffers from performance loss under strong confinement, this further underscores the efficiency of our algorithm.

\subsection{Performance test for a high-density system}\label{sec::highdense}
In this section, we evaluate the computational performance of our spectral SOG method in evaluating $\Phi(\bm{r}_i)$ for high-density systems. The system comprises of $10^5$ particles with dimensions $L_x=L_y=10$ in periodic directions. We consider two aspect ratios: $\gamma=1$ for the cubic case and $\gamma=100$ for the strongly confined case. Note that the cubic case was previously studied using the SE method~\cite{shamshirgar2021fast}.
We compare our newly proposed spectral SOG method with Ewald-based spectral methods. For a fair comparison, it is crucial to account for factors such as parallelization, vectorization, running environments, and FFT libraries~\cite{Gholami2016SISC,Pippig2013SISC}. Thus, our focus here is not on direct CPU time comparisons but on demonstrating the potential advantages of our method in reducing total FFT grids and efficiently handling strongly confined systems. 

\renewcommand\arraystretch{1.2}
\begin{table}[!ht]
	\caption{CPU time performance of the spectral SOG method for high-density systems with cubic shape ($\gamma=1$) and strongly confined shape ($\gamma=100$), with $L_x=L_y=10$, $\varepsilon_r=10^{-12}$, $b=1.17$, and $N=10^5$. The explanations for other parameters in the table are consistent with Table~\ref{tabl:CubicTime}. To produce these results, we use $r_c=1.8$, $\eta=0.37$, $I=94$, $\mathcal{P}=15$, $\lambda_z=1.99$, $I^{\rm long}=7$, and $P=20$ for $\gamma=1$ case; and $r_c=1.3$, $\mathcal{P}=19$, $I^{\rm long}=130$, $Q=6$, and $P=8$ for $\gamma=100$ case. The symbol ``$-$'' indicates the mid-range solver is not called.
}
	\centering
	\begin{tabular}{cccccc}
        \\
         \toprule
         $\gamma$ & $t_\text{total}(s)$ & $t_\text{short}(s)$ & $t_\text{mid}(s)$ & $t_\text{long}(s)$ &  $\zeta(\%)$ \\
         \midrule
            $1$ & $1.60 \times 10^{1}$ & $7.87 \times 10^{0}$ & $5.64 \times 10^{0}$ & $2.51 \times 10^{0}$ & $15.7$\\
            $100$ & $3.53\times 10^{1}$ & $2.20\times 10^{1}$ & $-$ & $1.33 \times 10^{1}$ & $37.6$ \\
         \bottomrule
        \end{tabular}
	\label{tabl:CubicHighdense}
\end{table}

For the case $\gamma=1$, Table~\ref{tabl:CubicHighdense} presents CPU times for each component of the spectral SOG solver. Notably, the state-of-the-art TKM-based SE method requires an optimal zero-padding factor of $2$ and additional adaptive upsampling~\cite{shamshirgar2021fast}. We set the zero-padding factor of our method to a comparable $\lambda_z=1.99$ but without any upsampling. Following the error estimate of Ewald decomposition~\cite{kolafa1992cutoff}, initially proposed for 3DPBC but also applicable to quasi-2D cases~\cite{gan2024fast,lindbo2012fast}, the decomposition error of Ewald splitting is $\sim e^{-r_c^2/(\sqrt{2}\sigma_{\text{Ewald}})^2}$. Referring to Eq.~\eqref{eq::convrate}, the error for SOG decomposition is $\sim e^{-r_c^2/s_{-1}^2}=e^{-r_c^2/(\sqrt{2}b^{-1}\sigma_{\text{SOG}})^2}$ when $\omega$ approaches $1$, with $\sigma_{\text{SOG}}/\sigma_{\text{Ewald}}\approx b=1.17$ in this scenario. As discussed in~\cite{shan2005gaussian,DEShaw2020JCP}, the required number of FFT grids is approximately $1/\sigma^3$, hence Ewald-based calculations require about $1.17^3=1.6$ times the total number of FFT grids for equivalent $10^{-12}$ accuracy. Please note that this estimation assumes that the cost associated with each Fourier mode is equal for both methods.

Next, we study the $\gamma=100$ case, where all Gaussians are categorized as long-range. As the bulk density increases compared to the $\gamma=1$ case, the required FFT grids in the periodic directions of the long-range solver increases to $I^{\rm long}=130$. Notably, only $P=8$ Chebyshev grids are needed for the free direction calculation. In contrast, for Ewald-based methods, the number of FFT grids required in the free direction is proportional to $I^{\rm long}$ after zero-padding (as analyzed in Section~\ref{subsec::rangesplitting}). This suggests that Ewald-based methods would require approximately $130/8\approx 16.25$ times the total number of grids. Thus, our proposed spectral SOG method demonstrates significant promise for efficiently handling highly anisotropic quasi-2D systems.

\section{Conclusions}\label{sec::con}
We have introduced a fast spectral SOG solver for electrostatic summation in quasi-2D systems. Unlike previous Ewald-based spectral methods, our approach applies an SOG decomposition to the Laplace kernel, eliminating singular Fourier integrals and partitioning the interactions into near-field, mid-range, and long-range components. The near-field component is calculated directly, while the mid-range component is handled by a pure Fourier spectral solver, similar to the NUFFT. The long-range component, consisting of Gaussians with narrow Fourier space support, is addressed by an efficient Fourier-Chebyshev solver, where polynomial interpolation/anterpolation replaces the Fourier method in the free dimension. By leveraging FFT acceleration, our solver achieves \(O(N \log N)\) complexity per time step with a small prefactor. We have provided detailed discussions on grid discretization, error analysis, and parameter selection, with systematic tests confirming the accuracy and efficiency of our proposed solver.

Compared to previous spectral solvers, the newly proposed spectral SOG solver not only inherits super-algebraic convergence and  decouples approximation errors from truncation errors, but also addresses challenges arising from singularities and strong confinement without any upsampling. Furthermore, this solver offers the flexibility to incorporate other kernel functions easily through kernel-independent SOG methods. Future work will focus on CPU/GPU parallelization, extending the method to quasi-1D and free space electrostatic problems, and applying this approach to practical simulations.
In a certain sense, our scheme can be viewed as a simplified $O(N\log N)$ two-level nonadaptive version of the general DMK framework, since it also uses the SOG approximation as a dual-space kernel-splitting for the Laplace kernel. Thus, it is possible to extend our scheme to the fully adaptive case using the full machinery of the DMK framework.
	
\appendix
\section{Mathematical Preliminaries}
In this appendix, we present fundamental mathematical tools that underpin our work. These tools are standard in scientific computing and are extensively covered in foundational textbooks.
\subsection{Chebyshev Expansion}\label{subsec::ChebyshevExpansion}
Let the Chebyshev polynomial of degree $n$ on $[-1,1]$ be defined by the formula
		\begin{equation}
			T_n(\cos(\theta))=\cos(n\theta),\quad\theta\in[0,\pi].
		\end{equation}
		It follows $T_0(x)=1$, $T_{1}(x)=x$, and
		\begin{equation}
			T_{n+1}(x)=2xT_{n}(x)-T_{n-1}(x)\quad \text{for}\quad n\geq 1.
		\end{equation} 
The functions $T_n$ constitute an orthonormal basis with respect to the inner product 
\begin{equation}
\langle f,g\rangle=\int_{-1}^{1}\frac{f(x)g(x)}{\sqrt{1-x^2}}dx.
\end{equation}
Let $f(x)$ be a smooth function on the interval $[-1,1]$. Then it is well-known to have a rapidly converging Chebyshev series
	\begin{equation}
		 f(x)=\sum_{n=0}^{\infty}{}^{\prime}a_nT_n(x),
	\end{equation}
    where the prime indicates that there a factor of $1/2$ is multiplied in front of $a_0$, and the coefficients are strictly given by
    \begin{equation}
    	a_{n}=\frac{2}{\pi}\int_{0}^{\pi}f(\cos(\theta))\cos(k\theta)d\theta.
    \end{equation}

\subsection{Chebyshev Interpolation and Evaluation}\label{subsec::ChebyshevInterpolation}
The Chebyshev nodes of the first kind are the zeros of $T_n(x)$, given by
\begin{equation}\label{eq::Chebyshevnode}
    \left\{x_{i}=\cos\left[\frac{(i-1/2)\pi}{P}\right],\,\,i=1,\cdots,P\right\}.
\end{equation}
We define the $P\times P$ basis matrix $\bm{V}$ by $V(i,j)=T_{j}(x_i)$. Given
the vector of function values $\bm{f}= (f(x_1),\cdots,f(x_P))$, $f(x)$ can be approximated by a $P$-term truncated Chebyshev series where the coefficients $\bm{a}=(a_1,\cdots,a_P)$ can be obtained as
\begin{equation}\label{eq::ChebyshevInt}
    \bm{a}=\bm{V}^{-1}\bm{f}.
\end{equation}
We refer this procedure as ``Chebyshev Interpolation''.

Given a set of $N$ additional points $\{y_j\in[-1,1],\,\,j=1,\cdots,N\}$, we define the $N\times P$ evaluation matrix $\bm{E}$ by $E(j,n)=T_{n}(y_j)$, so that 
\begin{equation}\label{eq::ChebyshevEva}
    \bm{f}^*=\bm{E}\bm{a}
\end{equation}
is the value of the interpolant at the additional points. This procedure is referred as ``Chebyshev Evaluation''. In general, the value of $P$ depends on the desired precision of the calculation and the smoothness of the kernel $f$. 


\subsection{Convolution Theorem}\label{subsec::conv}
 Let $f([\dot{\bm{r}},z])$, $g([\dot{\bm{r}}, z])$ be two functions that are periodic in $\dot{\bm{r}}$ and non-periodic in $z$. Suppose that $f$ and $g$ have Fourier transform $\widehat{f}$ and $\widehat{g}$, respectively. Their convolution is defined by
\begin{align}
u([\dot{\boldsymbol{r}}, z]):=(f * g)([\dot{\boldsymbol{r}}, z])=\int_{\mathcal{R}^2} \int_{\mathbb{R}} f\left([\dot{\boldsymbol{r}}-\dot{\boldsymbol{r}}^{\prime}, z-z^{\prime}]\right) g\left(\dot{\boldsymbol{r}}^{\prime}, z^{\prime}\right) d z^{\prime} d \dot{\boldsymbol{r}}^{\prime},
\end{align}
satisfying
\begin{align}
\widetilde{u}([\dot{\boldsymbol{k}}, k_z])=\widetilde{f}([\dot{\boldsymbol{k}}, k_z]) \widetilde{g}([\dot{\boldsymbol{k}}, k_z]).
\end{align}

\subsection{Plancherel's Theorem}\label{subsec::Plancherel}
Let $f([\dot{\bm{r}},z])$, $g([\dot{\bm{r}}, z])$ be two functions that are periodic in $\dot{\bm{r}}$ and non-periodic in $z$, and have Fourier transform $\widehat{f}([\dot{\bm{k}},k_z])$ and $\widehat{g}([\dot{\bm{k}},k_z])$, respectively. Then we have
\begin{equation}
    \int_{\mathbb{R}} \int_{\mathcal{R}^2} f([\dot{\boldsymbol{r}}, z]) \overline{g([\dot{\boldsymbol{r}}, z])} \mathrm{d} \dot{\boldsymbol{r}} \mathrm{d} z=\frac{1}{2 \pi L_xL_y} \int_{\mathbb{R}} \sum_{\dot{\boldsymbol{k}}\in\mathcal{K}^2} \hat{f}([\dot{\boldsymbol{k}}, k_z]) \overline{\hat{g}([\dot{\boldsymbol{k}}, k_z])} \mathrm{d} k_z,
\end{equation}
where the overline indicates taking conjugation. A special case of above identity is when $f([\dot{\bm{r}},z])\equiv g([\dot{\bm{r}},z])$.

\subsection{Poisson summation formula}\label{subsec::PoissonSummation}
Let $f(\bm{r})$ and $\hat{f}(\bm{k})$ be a Fourier transform pair with $\bm{r},\bm{k}\in\mathbb{R}^3$. Suppose $\bm{h}=[h_x,h_y,h_z]$ and $\bm{h}_{\text{inv}}=[1/h_x,1/h_y,1/h_z]$ with $h_d>0$, we have
\begin{equation}        \sum_{\bm{n}\in\mathbb{Z}^3}f(\bm{t}_0+\bm{n}\circ\bm{h})=\frac{1}{h_xh_yh_z}\sum_{\bm{n}\in\mathbb{Z}^3}\hat{f}(2\pi\bm{n}\circ\bm{h}_{\text{inv}})e^{2\pi i \bm{t}_0\cdot (\bm{n}\circ \bm{h}_{\text{inv}})},
\end{equation}
where $\bm{t}_0\in\mathbb{R}^3$ is a constant vector.

\section{A simplified version of the Fourier-Chebyshev solver}\label{appendix::simplified}
When the system is not very strongly confined, i.e. $L_z\not\ll\min\{L_x,L_y\}$, the Fourier-Chebyshev solver proposed in Section~\ref{sec::Chebyshev} can be simplified. One directly samples 
\begin{equation}\label{eq::gridnonfft}
\widehat{S}_{\text{grid}}(\left[\dot{\bm{k}},z\right])=\sum_{\ell=m+1}^{M}w_{\ell}s_{\ell}^2e^{-s_{\ell}^2\dot{k}^2/4}\sum_{j=1}^{N}q_{j}e^{-i\dot{\bm{k}}\cdot\dot{\bm{r}}_j}e^{-(z-z_{j})^2/s_{\ell}^2}
\end{equation}
on a tensor product grid $\mathscr{S}_{\text{F-C}}\subset\left\{(\dot{\bm{k}},z)\in\mathscr{F}^2\times \mathscr{C}\right\}$ where $\mathscr{F}$ and $\mathscr{C}$ denote the set of Fourier nodes on periodic domains and Chebyshev nodes scaled on $[-L_z/2,L_z/2]$, respectively. This step is referred as the gridding step. Next, one applies the Chebyshev interpolation to obtain the Fourier-Chebyshev coefficients $\widetilde{S}_{\text{grid}}([\dot{\bm{k}},n])$ from $\widehat{S}_{\text{grid}}([\dot{\bm{k}},z])$ for each $\dot{\bm{k}}$ and $n\in\{0,\cdots,P-1\}$. Finally, one gathers all the contributions from $\widetilde{S}_{\text{grid}}$, 
\begin{equation}\label{eq::3.13A}
\Phi_{\mathrm{SOG}}^{\rm long}\left([\dot{\bm{r}}_i,z_i]\right)=\frac{2\pi}{L_x L_y L_z}\sum_{\dot{\bm{k}} \in \mathcal{K}^2} \sum_{n=0}^{P-1}{}^{\prime}\, \widetilde{S}_{\text{grid}}([\dot{\boldsymbol{k}},n]) e^{i \dot{\boldsymbol{k}} \cdot \dot{\bm{r}}_i} T_n\left(\frac{2z_i}{L_z}\right),
\end{equation}
by applying the backward 2D Fourier transform and Chebyshev evaluation in periodic and non-periodic dimensions, respectively, which is denoted as the gathering step. The procedure of this simplified Fourier-Chebyshev spectral solver
is summarized in Algorithm~\ref{al::long_nonfft}.
\begin{algorithm}[H]
	\caption{~Fast spectral solver for long-range potential $\Phi_{\text{SOG}}^{\rm long}$ (without FFT)}\label{al::long_nonfft}
	\begin{algorithmic}[1]
	\State (Gridding) Sample $\widehat{S}_{\text{grid}}([\dot{\bm{k}},z])$ on a tensor product grid $\mathscr{S}_{\text{F-C}}$ via Eq.~\eqref{eq::gridnonfft}.
	\State (Interpolation) Obtain $\widetilde{S}_{\text{grid}}([\dot{\boldsymbol{k}},n])$ from $\widehat{S}_{\text{grid}}([\dot{\bm{k}},z])$ by the Chebyshev interpolation.
	\State (Gathering) Recover $\Phi^{\rm long}_{\text{SOG}}([\dot{\bm{r}}_i,z_i])$ by gathering contributions from $\widetilde{S}_{\text{grid}}([\dot{\boldsymbol{k}},n])$ via Eq.~\eqref{eq::3.13A}.
	\end{algorithmic}
\end{algorithm}

\section{Efficient computation of Fourier-Chebyshev coefficients}\label{subsec::Gridding}
In Step $1-3$ of Algorithm~\ref{al::long}, a complexity proportional to $\rm long$, the number of long-range Gaussians, is required to obtain the Fourier-Chebyshev coefficients $\widetilde{S}_{\text{scal}}([\dot{\bm{k}},n])$ which could become costly if $\rm long$ is substantial. Fortunately, we have observed that the long-range Gaussians can be effectively approximated by a Taylor series, which helps simplify Algorithm~\ref{al::long}. 

\begin{lemma}\label{lemma::Taylor}
Each long-range Gaussian satisfying $s_\ell>\eta L_z$ has a $Q$-term truncated Taylor expansion on the interval $[-L_z/2,L_z/2]$ with a remainder of Lagrange type,
\begin{equation}
	\label{eq::Taylor}
	e^{-z^2/s_\ell^2}=\sum_{p=0}^{Q-1}\frac{1}{p!}\left(-\frac{z^2}{s_\ell^2}\right)^{p}+\frac{(-1)^Qe^{-\xi}}{Q!}\cdot \left(-\frac{z^2}{s_\ell^2}\right)^{Q},\quad \xi \in \left[0,\frac{1}{4\eta^2}\right],
\end{equation}
where the remainder can be bounded by 
\begin{equation}\label{eq::Taylor2}
    \left|e^{-z^2/s_\ell^2}-\sum_{p=0}^{Q-1}\frac{1}{p!}\left(-\frac{z^2}{s_\ell^2}\right)^{p}\right|\leq \frac{1}{Q!\,(2\eta)^{2Q}}.
\end{equation}
\end{lemma}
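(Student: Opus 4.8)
The plan is to observe that the left-hand side is nothing but the exponential function evaluated at the single scalar argument $w:=-z^2/s_\ell^2$, so the entire statement collapses to the elementary one-variable Taylor theorem for $e^{w}$ about $w=0$, followed by a crude but uniform bound on the size of $w$ over the interval $[-L_z/2,L_z/2]$. First I would apply Taylor's theorem with the Lagrange form of the remainder to $g(w)=e^{w}$ expanded about $w=0$. Since $g^{(p)}(w)=e^{w}$ for every $p$, one obtains $g(w)=\sum_{p=0}^{Q-1}w^{p}/p!+e^{\zeta}w^{Q}/Q!$ for some $\zeta$ lying strictly between $0$ and $w$. Substituting $w=-z^2/s_\ell^2$ reproduces exactly the truncated sum in Eq.~\eqref{eq::Taylor} and turns the remainder into the monomial $(-z^2/s_\ell^2)^{Q}$ times the Lagrange coefficient; writing $\xi=-\zeta$ recasts $e^{\zeta}$ as $e^{-\xi}$, which is the form recorded in Eq.~\eqref{eq::Taylor}. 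Because here $w=-z^2/s_\ell^2\le 0$, the point $\zeta$ lies in $[w,0]$, hence $\xi=-\zeta\in[0,\,z^2/s_\ell^2]$, giving the sign convention and the interval for $\xi$ asserted in the lemma.

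Next I would establish the uniform remainder bound Eq.~\eqref{eq::Taylor2}. The remainder has magnitude $e^{-\xi}\,(z^2/s_\ell^2)^{Q}/Q!$, and since $\xi\ge 0$ we immediately have $e^{-\xi}\le 1$, so it only remains to control the argument $z^2/s_\ell^2$ uniformly in $z$. Using $|z|\le L_z/2$ together with the hypothesis $s_\ell>\eta L_z$ gives $z^2/s_\ell^2\le (L_z/2)^2/(\eta L_z)^2=1/(4\eta^2)$, which in particular confirms $\xi\in[0,1/(4\eta^2)]$. Raising to the $Q$th power yields $(z^2/s_\ell^2)^{Q}\le (4\eta^2)^{-Q}=(2\eta)^{-2Q}$, and combining this with $e^{-\xi}\le 1$ produces the stated bound $1/\bigl(Q!\,(2\eta)^{2Q}\bigr)$.

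There is no genuine obstacle here; the argument is routine and the only points requiring mild care are bookkeeping ones. The first is the sign and range of the Lagrange parameter: because the exponent $-z^2/s_\ell^2$ is nonpositive, the intermediate point for $e^{w}$ sits on the negative axis, and one must set $\xi=-\zeta$ to match the $e^{-\xi}$ convention with $\xi\ge 0$. The second is simply verifying that the argument bound $z^2/s_\ell^2\le 1/(4\eta^2)$ uses both $|z|\le L_z/2$ and $s_\ell>\eta L_z$ in the right places. I would finally emphasize that the resulting estimate is independent of both $\ell$ and $z$, since this uniformity is precisely what is exploited when the bound is later summed over all long-range Gaussians in the error analysis of the Fourier--Chebyshev solver.
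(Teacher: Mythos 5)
Your proof is correct and is precisely the standard argument underlying the lemma, which the paper states without an explicit proof as an immediate consequence of Taylor's theorem with Lagrange remainder: expand $e^{w}$ about $w=0$, substitute $w=-z^2/s_\ell^2$, set $\xi=-\zeta\ge 0$, and bound $e^{-\xi}\le 1$ together with $z^2/s_\ell^2\le (L_z/2)^2/(\eta L_z)^2=1/(4\eta^2)$ from $|z|\le L_z/2$ and $s_\ell>\eta L_z$. One cosmetic remark: your remainder $e^{-\xi}w^{Q}/Q!$ carries the correct alternating sign $(-1)^{Q}$, whereas the paper's displayed remainder contains a redundant factor $(-1)^{Q}$ multiplying $\left(-z^2/s_\ell^2\right)^{Q}$ that renders it nonnegative; this discrepancy is immaterial for the bound in Eq.~\eqref{eq::Taylor2}, which involves only absolute values.
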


Lemma~\ref{lemma::Taylor} states that the Taylor expansion exhibits rapid convergence, with the estimate rate in Eq.~\eqref{eq::Taylor2} depending on $\eta$. For instance, when $\eta=1$, choosing $Q=4$, $7$, and $12$ provides approximately $4$, $8$, and $15$ decimal digits of accuracy, respectively. Consequently, Steps $1-3$ of Algorithm~\ref{al::long} can be adjusted as follows. First, we sample 
\begin{equation}
S_{\text{grid}}^{p}([\dot{\bm{r}},z])=\sum_{j=1}^{N}q_jW(\dot{\bm{r}}-\dot{\bm{r}}_j)\left(\frac{z-z_j}{L_z}\right)^{2p},\quad p=0,1,\cdots,Q-1,
\end{equation}
on a tensor product grid $\mathscr{S}_{\text{R-C}}\subset\left\{(\dot{\bm{r}},z)\in\mathscr{R}^2\times \mathscr{C}\right\}$, where the factor $L_z$ is added to eliminate ``catastrophic cancellation'' errors. Next, applying 2D FFTs and Chebyshev interpolation to obtain Fourier-Chebyshev coefficients $\widetilde{S}_{\text{grid}}^{p}([\dot{\bm{k}},n])$. Finally, we collect contributions from the Taylor series to obtain 
\begin{equation}
\widehat{S}_{\text{scal}}([\dot{\bm{k}},n]):=\sum_{p=0}^{Q-1}A_{p}(\dot{\bm{k}})\widetilde{S}_{\text{grid}}^{p}([\dot{\bm{k}},n]),
\end{equation}
where the coefficients are given via
\begin{equation}
A_{p}(\dot{\bm{k}})=\frac{(-1)^p\left|\widehat{W}(\dot{\bm{k}})\right|^{-2}}{p!}\sum_{\ell=m+1}^{M}\frac{ w_{\ell}L_z^{2p}}{s_{\ell}^{2p-2}}e^{-s_{\ell}^2\dot{\bm{k}}/4}.
\end{equation}
Importantly, the computational complexity of the above procedure is independent of $\rm long$, as the summation over $\ell$ is solely engaged in $\widehat{A}_n(\dot{\bm{k}})$, which can be precomputed and stored.

\section{The Debye-H\"uckel approximation}\label{app::DH}
We apply the DH theory~\cite{hansen2013theory} to estimate quantities in the form of
\begin{equation}
\mathcal{G}(\bm{k},\bm{r}_i)=\sum_{j=1}^{N}q_{j}e^{i\bm{k}\cdot\bm{r}_{ij}}G(r_{ij}),
\end{equation}
where $G(r)$ is a radial symmetry function with polynomial growth with respect to $r$ at most. The DH theory considers the simplest model of an electrolyte solution confined to the simulation cell, where all $N$
ions are idealized as hard spheres of diameter $a$ carrying charge $\pm q$ at their centers. 

Let us fix one ion of charge $q$ at the origin and consider the distribution of the other ions around it. In the region $\mathring{B}(a):=\{0<r<a\}$, the potential is governed by the Laplace equation $\nabla^2 \phi=0$. Outside $\mathring{B}$, the charge distribution is characterized by the Boltzmann distribution $\rho_{\pm}(\boldsymbol{r})=\pm q \rho_{\infty,\pm} e^{\mp\beta q \phi}$ with $\rho_{\infty, \pm}=N/(2V)$ and $\beta=1/k_{\text{B}}T$. The corresponding Poisson's equation can be linearized as
\begin{equation}
    - \nabla^2 \phi=q \rho_{+} e^{-\beta q \phi}-q \rho_{-} e^{\beta q \phi} \approx -\beta q^2 \rho \phi, \quad r\geq a,
\end{equation}
and its solution is given by
\begin{equation}
    \phi(r)= \begin{cases}\dfrac{q}{4 \pi  r}-\dfrac{q \kappa}{4 \pi (1+\kappa a)}, & r<a, \\ \dfrac{q e^{\kappa a} e^{-\kappa r}}{4 \pi r(1+\kappa a)}, & r\geq a ,\end{cases}
\end{equation}
where $\kappa=\sqrt{\beta q^2\rho}$ denotes the inverse of Debye length $\lambda_{\text{D}}$. As a result, the net charge density for $r\geq a$ reads $\rho(r)=-\kappa^2 \phi(r)$. In the context of quasi-2D systems, ions is confined in $z$-direction. Given these considerations, for $r \geq a$, we obtain the following estimate:
\begin{equation}\label{eq::C}
    \begin{aligned}
        \left|\sum_{j\neq i}q_je^{i\bm{k}\cdot \bm{r}_{ij}}G(r_{ij})\right|&\approx \left|\int_{(\mathbb{R}^3\backslash B(\bm{r}_i,a))\cap \{|z|\le L_z/2\}} \rho(\bm{r})e^{i\bm{k}\cdot \bm{r}}G(r)d\bm{r}\right|\\
        &\leq \frac{q\kappa^2e^{\kappa a}}{(1+\kappa a)}\int_{a}^{\infty}re^{-\kappa r}G(r)dr
    \end{aligned}
\end{equation}
Eq.~\eqref{eq::C} indicates that $|\mathcal{G}(\bm{k},\bm{r}_i)|\leq qC_{\text{DH}} $, where $C_{\text{DH}}$ is a constant depending on the expression of $G(r)$. For instance, if $G(r)\equiv 1$, we have $C_{\text{DH}}=1$; if $G(r)=1/r$, we have $C_{\text{DH}}=\kappa/(1+\kappa a)$.

The results above can be readily extended to multi-component systems using the generalized DH theory~\cite{levin2002electrostatic}. When the charge distribution along the $z$-direction lacks spatial homogeneity, the DH theory may not be accurate. However, one can still expect an upper bound in the form of $|\mathcal{G}(\bm{k},\bm{r}_i)|\leq Cq$. This is attributed to the confinements, ensuring that the integral in Eq.~\eqref{eq::C} along the $z$-direction remains bounded.

\section{Reference parameters}\label{app::ParaSelection}

\text{    }

\renewcommand\arraystretch{1.1}
\begin{table}[H]
\caption{Reference parameters for test systems used in Section \ref{sec::total}, listed for different aspect ratios $\gamma$ and error tolerances $\varepsilon_r$. Note that $I_x=I_y=I_z=I$ and $\mathcal{P}_x=\mathcal{P}_y=\mathcal{P}_z=\mathcal{P}$. 
In this table, ``$-$'' denotes that the corresponding parameter is not used in the solver.}
	\centering
	\begin{tabular}{ccccccccc}
        \\
         \toprule
         $L$ & $\varepsilon_r$ & $\eta$ & $I$ & $\mathcal{P}$ & $\lambda_z$ &  $I^{\rm long}$ & $Q$ & $P$  \\
         \midrule
         $(20, 20, 20)$ & $10^{-3}$ & $0.68$ &  $(8,8,8)$ & $(9,9,9)$ & $1.47$ & $(1,1)$ & $-$ & $1$  \\
         $(20, 20, 20)$ & $10^{-6}$ & $0.68$ & $(16,16,16)$ & $(13,13,13)$ & $1.64$ & $(3,3)$ & $-$ & $9$  \\
         $(20, 20, 20)$ &  $10^{-12}$ & $0.68$& $(36,36,36)$ & $(17,17,17)$ & $2.23$ & $(5,5)$ & $-$ & $14$  \\
         \\
         $(30, 30, 0.3)$ & $10^{-3}$ & $-$ & $-$ & $(9,9)$ & $-$ & $(16,16)$ & $2$& $2$  \\
         $(30, 30, 0.3)$ & $10^{-6}$ & $-$ & $-$ & $(13,13)$ & $-$ & $(24,24)$ & $4$& $4$  \\
         $(30, 30, 0.3)$ &  $10^{-12}$ & $-$ & $-$ & $(17,17)$ & $-$ & $(64,64)$ & $6$& $8$  \\
         \bottomrule
        \end{tabular}
	\label{tabl:CubicTime2}
\end{table}

\providecommand{\bysame}{\leavevmode\hbox to3em{\hrulefill}\thinspace}
\providecommand{\MR}{\relax\ifhmode\unskip\space\fi MR }
\providecommand{\MRhref}[2]{%
  \href{http://www.ams.org/mathscinet-getitem?mr=#1}{#2}
}
\providecommand{\href}[2]{#2}

	

\bibliographystyle{amsplain}

\end{document}